\documentclass{amsproc}
\usepackage{euscript}
\usepackage{cases}
\usepackage{mathrsfs}
\usepackage{bbm}
\usepackage{amssymb}
\usepackage{amsfonts,amsmath,amsxtra,mathdots,mathabx,mathtools}
\usepackage[mathlines]{lineno}
\usepackage{color}
\usepackage{hyperref}
\usepackage{tikz}
\usepackage{appendix,upgreek}

\allowdisplaybreaks

\DeclareFontFamily{U}{matha}{\hyphenchar\font45}
\DeclareFontShape{U}{matha}{m}{n}{
	<5> <6> <7> <8> <9> <10> gen * matha
	<10.95> matha10 <12> <14.4> <17.28> <20.74> <24.88> matha12
}{}
\DeclareSymbolFont{matha}{U}{matha}{m}{n}

\DeclareMathSymbol{\Lt}{3}{matha}{"CE}
\DeclareMathSymbol{\Gt}{3}{matha}{"CF}

\DeclareSymbolFont{mathc}{OML}{txmi}{m}{it}
\DeclareMathSymbol{\varuu}{\mathord}{mathc}{117}
\DeclareMathSymbol{\varvv}{\mathord}{mathc}{118}
\DeclareMathSymbol{\varww}{\mathord}{mathc}{119}

\def\SO{\text{\raisebox{- 2 \depth}{\scalebox{1.1}{$ \text{\usefont{U}{BOONDOX-calo}{m}{n}O} \hspace{0.5pt} $}}}}

\def\valpha{\text{\scalebox{0.86}[1.02]{$\alpha$}}}   
\def\vepsilon{\upvarepsilon}
\def\vnu{\text{{\scalebox{0.86}[1]{$\nu$}}}}

\def\uppii{\text{\scalebox{0.8}[0.96]{$\uppi$}}}
\def\upvchi{\text{\scalebox{0.8}[1]{$\upchi$}}}

\def\viint{	\int \!\!\! \int} 
 
\def\shskip{\hspace{0.5pt}} 

\renewcommand{\Re}{{\mathrm{Re} }}

\def\mod{\mathrm{mod}\,  }
\def\nd{\mathrm{d}}
\def\Tr{\mathrm{Tr}}

\newcommand{\BC}{{\mathbb {C}}}
\newcommand{\BH}{{\mathbb {H}}}
\newcommand{\BQ}{{\mathbb {Q}}} 
\newcommand{\BR}{{\mathbb {R}}} 
\newcommand{\BZ}{{\mathbb {Z}}}

\newcommand{\RC}{{\mathrm {C}}} 
\newcommand{\RN}{{\mathrm {N}}}

\newcommand{\GL}{{\mathrm {GL}}}

\newcommand{\ra}{\rightarrow}

\def\mod{\mathrm{mod}\,  }
\def\nd{\mathrm{d}}

\def\lp {\left (}
\def\rp {\right )}

\def\lb {\left [ }
\def\rb {\right ]}

\def\shskip{\hspace{0.5pt}}

\def\eqq {q}

\newcommand{\CL}{{\text{\large $\EuScript{L}$}}}

\newcommand{\delete}[1]{}

\theoremstyle{plain}

\newtheorem{cor}{Corollary}[section]
\newtheorem{lem}{Lemma}[section]
\newtheorem{theorem}{Theorem}[section] 
\newtheorem{proposition}{Proposition}[section]

\newtheorem*{thm*}{Theorem}
\newtheorem{hyp}{Hypothesis}[section]

\theoremstyle{remark} 
\newtheorem{remark}{Remark}[section] 

\numberwithin{equation}{section}

\usepackage[mathlines]{lineno}

\begin{document}

	\title[Hybrid Weyl Subconvexity over Imaginary Quadratic Fields]{Hybrid Weyl Subconvexity over Imaginary Quadratic Fields}
	\author[Z. Gao, C. Li, and Z. Qi]{Zhengxiao Gao, Changlin Li,  and Zhi Qi}
	\address{School of Mathematical Sciences, Zhejiang University, Hangzhou, 310027, China}
	\email{zxgao@zju.edu.cn,12135012@zju.edu.cn, zhi.qi@zju.edu.cn}

	\begin{abstract}
		On imaginary quadratic fields, we establish the  $\mathrm{GL}_2$ Bessel $\delta$-method and prove the hybrid Weyl-type subconvexity bound for $\mathrm{GL}_2 \times \mathrm{GL}_1$ twisted $L$-functions in the Archimedean aspect of the Hecke character on  $\mathrm{GL}_1$.  
	\end{abstract}

	\subjclass[2010]{11F66}
	\keywords{$L$-functions, subconvexity, Vorono\"i summation formula, Bessel delta method, imaginary quadratic fields}
	\thanks{The third author was supported by National Key R\&D Program of China No. 2022YFA1005300.}
	\maketitle

\section{Introduction}

	Let $F $ be an imaginary quadratic field of class number $h_{F} = 1$. Let $\SO$ denote its ring of integers and $\SO^{\times}$ its group of units. Define $w_F = |\SO^{\times}|$. 
Let $\uppii $ be a fixed cuspidal automorphic representation for $ \mathrm{PGL}_2(\SO) \backslash \mathrm{PGL}_2(\BC) $. For $ (t, m) \in \BR \times w_{F} \cdot \BZ $, define the Gr\"ossencharakter $\upvchi_{it, m} $ on $ \SO \smallsetminus \{ 0 \} $ to be 
\begin{align*}
	\upvchi_{it, m} (n) = |n|^{it} (n / |n|)^{m}. 
\end{align*}
Consider the $L$-function $L(s,\uppii \times \upvchi_{it,m})$  associated with $\uppii$ twisted by  $\upvchi_{it,m}$.  The main result  of this paper is the hybrid Weyl-type subconvexity bound in both the $t$- and $m$-aspects as follows.

	\begin{theorem}\label{1thm: Weyl bound}
	We have 
	\begin{equation*}
		L(1/2,\uppii\times \upvchi_{it,m}) \Lt_{\uppii,\vepsilon, F} (1+|t|+|m|)^{2/3+\vepsilon},
	\end{equation*}
where the implied constant depends on $\uppii$, $\vepsilon$, and the field $F$. 
\end{theorem}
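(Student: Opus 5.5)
We prove Theorem \ref{1thm: Weyl bound} by adapting the Bessel $\delta$-method of Aggarwal (Weyl bound for $\mathrm{GL}_2\times\mathrm{GL}_1$ in the $t$-aspect over $\BQ$) to $F$. Set $T = 1+|t|+|m|$. The analytic conductor of $L(s,\uppii\times\upvchi_{it,m})$ is $\asymp T^4$, since $F_\infty=\BC$ contributes a square. The approximate functional equation, followed by a smooth dyadic partition, reduces the problem to showing that for all $N\le T^{2+\vepsilon}$ and every smooth weight $V$ supported on $|n|^2\sim N$,
\begin{align*}
S(N)=\sum_{n\in\SO\smallsetminus\{0\}} \lambda_{\uppii}(n)\,\upvchi_{it,m}(n)\, V(|n|^2/N) \Lt T^{\vepsilon}\, N^{1/2}\, T^{2/3}.
\end{align*}
Here $N$ is the norm size, and the trivial bound is $N$. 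For $N\le T^{4/3}$ this is trivial, so we assume $N> T^{4/3}$.

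\textbf{Step 1 (Bessel $\delta$-identity).} Over $F$, we write the Voronoi/Bessel kernel for $\mathrm{GL}_2(\BC)$ as $J_{\uppii}(z)$ built from $J_{\mu}(z)J_{\mu}(\bar z)$-type Bessel functions. We aim for a $\mathrm{GL}_2$ Bessel $\delta$-method analogous to the real one: with a parameter $X$ and a weight $K$ of width $X^{-1}$ around $1$, one has an approximate identity
\begin{align*}
\lambda_{\uppii}(n)\,\approx\,\int K(r)\,\frac{1}{|n|}\sum_{\substack{n'\\ |n'|^2\approx |n|^2}}\lambda_{\uppii}(n')\,J(\ldots)\ldots
\end{align*}
This is the "Bessel $\delta$": the detection of $n=n'$ replaced by a family of Bessel integrals in $r$ that concentrate the oscillation. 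Concretely, I would follow Aggarwal's scheme. Insert $\sum_{n}\lambda_{\uppii}(n) e(\Tr(n\cdot))$-type expansions, plus a conductor-lowering integral over $r\in\BC$ with $|r|\sim 1$ and $|r-1|\le X^{-1}$, so that the $\upvchi_{it,m}$ phase $|n|^{2it}(n/|n|)^m$ is separated from the $\lambda_{\uppii}$ part. The main technical input is a stationary-phase analysis of two-dimensional oscillatory integrals of the form $\int_{\BC} \upvchi_{it,m}(z)\,e(\Tr(\xi z))\, w(z)\,dz$. These must be handled in the polar coordinates $(|z|,\arg z)$, so that the $m$- and $t$-aspects are treated symmetrically via the combined phase $t\log|z|+m\arg z$.

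\textbf{Step 2 (Dualization).} Apply $\mathrm{GL}_2$ Voronoi summation over $F$ (the $\mathrm{PGL}_2(\SO)$ level $1$ case) to the $n$-sum. Apply Poisson summation over $\SO$ (a two-dimensional lattice) to the $\upvchi$-twisted sum. After the stationary-phase evaluations, the dual sums have lengths roughly $N^{*}\asymp X^2 T^2\cdot(\ldots)/N$ for the Voronoi side, and a short dual Poisson sum. Cauchy–Schwarz over the $\lambda_{\uppii}$-variable, using Rankin–Selberg $\sum_{|n|^2\le Y}|\lambda_{\uppii}(n)|^2\Lt Y$, removes the automorphic coefficients. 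A second Poisson summation in the smooth variable then gives a diagonal term and an off-diagonal term, bounded via second-derivative bounds for the resulting two-dimensional integrals.

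\textbf{Step 3 (Optimization).} Balancing the diagonal and off-diagonal contributions leads to the choice $X\asymp T^{1/3}$-type, up to powers of $N/T^2$, which yields $S(N)\Lt N^{1/2}T^{2/3+\vepsilon}$ uniformly for $T^{4/3}<N\le T^{2+\vepsilon}$. Substituting this into the approximate functional equation proves the theorem.

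The main obstacle is Step 1 together with the integrals of Step 2. The $\mathrm{GL}_2(\BC)$ Bessel kernels and the two-variable phases are genuinely two-dimensional, and we need stationary-phase estimates uniform in the hybrid direction $(t,m)$, including the transition region where $|t|$ and $|m|$ are comparable. I would first try polar coordinates with a uniform asymptotic for the Bessel kernel $\boldsymbol J_{\mu,m}(z)$ (nonstationary for $|z|$ small, Hankel-type expansion for $|z|$ large). I would then apply a two-dimensional stationary phase lemma with explicit Hessian control, checking that the Hessian stays nondegenerate of size $\asymp T$ along the whole ray $|t|+|m|=T$.
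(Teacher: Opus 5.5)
Your reduction is correct: the conductor is $\asymp T^4$, the range is norm $N\leqslant T^{2+\vepsilon}$, the target is $N^{1/2}T^{2/3+\vepsilon}$, and the case $N\leqslant T^{4/3}$ is trivial. This agrees with the paper once you switch to its normalization $|n|\asymp N$. Your outline also lists the right sequence of moves. The genuine gap is Step 1, which is the paper's main new ingredient, and your version of it is not a usable identity. Your displayed approximation for $\lambda_{\uppii}(n)$ has no arithmetic modulus. A Munshi-type conductor-lowering integral with $|r-1|\leqslant X^{-1}$ only localizes $n'$ to a disc around $n$. Nothing you write makes the later Vorono\"i step produce a Kloosterman sum times a pure exponential. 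What the paper proves is the following: for $p$ prime, $|r|,|n|\asymp N$, $X\Gt|p|^2/N$ and $X^{1-\vepsilon}>N$,
\[ \delta(r=n)=\frac{\mathrm{N}(r)^{1/4}}{\mathrm{N}(p)^{3/2}X^{3/2}}\sum_{a\in\SO/p\SO}e_F\Big[\frac{a(r-n)}{p}\Big]I_{\vnu,\eqq}\Big(\frac{\sqrt r}{p},\frac{\sqrt n}{p};X\Big)V_{\vnu,\eqq}\Big(\Big|\frac{rX}{p^2}\Big|\Big)+O(X^{-A}). \]
Here $I_{\vnu,\eqq}$ is built from the Bessel kernel $\boldsymbol{J}_{\vnu,\eqq}$ of $\uppii$ itself. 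Two facts are indispensable:
\begin{itemize}
\item[(a)] Off the diagonal the integral is negligible unless $|r-n|\Lt|p|X^{\vepsilon}\sqrt{N/X}$, which is shorter than every nonzero element of $p\SO$.
\item[(b)] The diagonal value is non-degenerate: $I_{\vnu,\eqq}(x)=\widetilde U(3/2)x^{-1/2}+O(x^{-3/2})$. This comes from Qi's Fourier--Mellin formula for $\boldsymbol J_{\vnu,\eqq}$ by shifting past the poles at $s=3/2,1/2$. Without (b) you cannot normalize the identity at all.
\end{itemize}
Using $\boldsymbol J_{\vnu,\eqq}$ as the kernel is what makes reverse Vorono\"i modulo $p$ collapse the Hankel transform. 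It leaves $\sum_n\lambda_{\uppii}(n)U(|n/d_F|/X)S_F(r,n;p)e_F[2\sqrt{nr}/p]$, so no Bessel asymptotics enter the later integrals. The average over primes $p\sim P$ is what later gives the off-diagonal its structure.

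Steps 2--3 are asserted rather than derived, and the numbers do not come from a consistent computation. Your choice $X\asymp T^{1/3}$ matches neither the paper's $X=P^2K^2/N$ with $K=C^{2/3}$, $P=N$, nor the localization $|r-n|\Lt N/K$, i.e.\ relative width $T^{-2/3}$. The saving rests on specific estimates you would have to supply:
\begin{itemize}
\item After Poisson modulo $p$, the dual $r$-sum has length $R=PC/N$ and the integrals are $\Lt1/C$.
\item After Cauchy--Schwarz and Poisson modulo $p_1p_2$, the integral $\CL(v)$ is negligible for $|v|\Gt K$. It is $\Lt1/(C^2|v|)$ for $N^{\vepsilon}K^2/C\Lt|v|\Lt K$, via the Weil identity and a second-derivative test for an almost separable phase.
\item The zero frequency forces $p_1=p_2$ and then $r_1=r_2$.
\item For $n\neq0$, the congruence $n\equiv\bar r_1p_2-\bar r_2p_1\ (\mathrm{mod}\ p_1p_2)$ fixes $r_1,r_2$ modulo $p_1,p_2$.
\end{itemize}
Together these give $N^{1+\vepsilon}(C/\sqrt K+K)$, optimized at $K=C^{2/3}$. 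The uniformity in the $(t,m)$ direction, which you flag as the main obstacle, is handled by harmonicity. Since $t\rho(z)+m\theta(z)=\mathrm{Re}\,((t-im)\log z)/2\pi$, after $\pm\sqrt z\to z$ every phase is harmonic with $|\partial^2/\partial z^2|\asymp C$ in every direction. Srinivasan's second-derivative test then applies directly, without polar coordinates or stationary-point asymptotics. As written, the proposal is a plan naming ingredients, not a proof.
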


The first subconvexity results over an imaginary quadratic field are due to Petridis and Sarnak in 2001 \cite{Sarnak-P-QUE-H3}, in which, for $F = \BQ (i)$ and spherical $\uppii$, it is proven (in our notation) that 
\begin{align*}
	L (1/2, \uppii \times \upvchi_{it, m}) & \Lt_{\uppii,\vepsilon,m} (1+|t|)^{159/166+\vepsilon}, \\
	L (1/2, \uppii \times \upvchi_{it, m}) & \Lt_{\uppii,\vepsilon,t} (1+|m|)^{159/166+\vepsilon}.
\end{align*}

Since the settlement of the subconvexity problem for $\GL_2$ in the seminal work of Michel and Venkatesh in 2010  \cite{Michel-Venkatesh-GL2}, there have been strong subconvexity bounds for $ \GL_2 \times \GL_1$ proven over   general number fields $F$.  In 2014, for a cuspidal automorphic representation $\uppii$ and a Hecke character $\upvchi$,  Han Wu \cite{WuHan-GL2} (see also \cite{Blomer-Harcos-TR,Maga-Sub}) established the Burgess-like subconvexity bound for the twisted $L$-functions:
\begin{equation*}
	L(1/2,\uppii\times\upvchi)\Lt_{\uppii,\vepsilon, F} \mathrm{C} (\upvchi)^{1/2-(1-2\theta)/8+\vepsilon},
\end{equation*}
where $\mathrm{C} (\upvchi)$ is the analytic conductor of $\upvchi$ and $\theta$ is any exponent towards the Ramanujan--Petersson conjecture. 
Recently, Liyang Yang \cite{Yang-Burgess-Bound} proved the genuine Burgess-type bound:
\begin{equation*}
	L(1/2,\uppii\times\upvchi)\Lt_{\uppii,\vepsilon, F} \mathrm{C} (\upvchi)^{3/8+\vepsilon}.
\end{equation*}
Thus, over imaginary quadratic fields $F$, our result improves the Burgess exponent $3/8$ into the Weyl exponent $1/3$  in the case that both $\uppii$ and $ \upvchi $ are unramified at non-Archimedean places (note here that $\mathrm{C}(\upvchi_{it, m}) \asymp (1+|t|+|m|)^2$). 

In the classical case, for a fixed holomorphic modular form $f (z)$ for $\mathrm{SL}_2(\BZ)$, Good \cite{Good-Weyl} proved in 1982 the Weyl-type subconvexity bound in the $t$-aspect:
\begin{equation*}
	L(1/2+it,f) \Lt_{f,\vepsilon} (1+|t|)^{1/3+\vepsilon}.
\end{equation*}
Recently, this was generalized  to modular forms $f (z)$ of arbitrary level and nebentypus  by the Jutila method (\cite{Jut87}) in \cite{BMN} and  by  the Bessel $\delta$-method in  \cite{AHLQ-Bessel}. 
Our approach follows the latter by a Bessel $\delta$-identity over $F$ established in \S \ref{sec: Bessel delta identity}, so the unramified assumption on $\uppii$ should be removable with some efforts. 

The reader is referred to \cite{FS-Maass,GLQi-Hybrid}  for the application of Bessel $\delta$-method in the hybrid subconvexity problem in both the $t$- and $q$-aspects. 

Finally, we remark that, similar to   \cite[Theorem 1.2]{Koyama-QUE} and \cite[Theorem 1.3]{Sarnak-P-QUE-H3}, in which they deduced from the $t$-aspect subconvexity the Quantum Unique Ergodicity (QUE) for (spherical) Eisenstein series $E (w; 1/2+it)$ on $\mathrm{PSL}_2 (\SO) \backslash \BH^3$, the hybrid subconvexity (as in \cite{Michel-Venkatesh-GL2,WuHan-GL2,Yang-Burgess-Bound} or Theorem \ref{1thm: Weyl bound}) would  imply QUE for (non-spherical) Eisenstein series on $\mathrm{PGL}_2(\SO) \backslash \mathrm{PGL}_2(\BC) $.  


\subsection*{Notation} 

By $X\Lt Y$ or $X=O(Y)$, we mean $|X|\leqslant cY$ for some constant $c>0$ while by $X\asymp Y$, we mean $X\Lt Y$ and $Y\Lt X$. We write $X \Lt_{\valpha, \beta, ...} Y $ or $  X = O_{\valpha, \beta, ...} (Y) $ if the implied constant $c$ depends on $\valpha$, $\beta$, ....  


For a large parameter $X$ (in most contexts $X = 1 + |t| + |m|$),  by `negligibly small' we mean $ O_A ( X^{-A} )$ for arbitrarily large but fixed $A > 0$. 

Throughout the paper,  $\vepsilon$ is arbitrarily small and its value  may differ from one occurrence to another.

	\section{Preliminaries}

	\subsection{Basic Notation}\label{sec: notation}
Let $F = \BQ ( \sqrt{d_{F}} )$ be an imaginary quadratic field with discriminant $d_{F} < 0$. We assume throughout that the class number $h_{F} = 1$. Let $\SO$ denote its ring of integers and $\SO^{\times}$ its group of units. 
Note that $ \SO' $ (the dual of $\SO$) is generated by $1/  \sqrt{d_F} $. 
Let $w_F = |\SO^{\times}|$. 
Let $\mathrm{N}$ and $\Tr$ denote the norm and the trace on $F \subset \BC$, respectively. 

Define $e[z]=e( \mathrm{Tr} (z))$ (as usual $e(x) = \exp (2\pi i x)$)  to be the standard additive character on $\BC$.  Let $\nd \mathrm{A} = i \nd z \! \wedge \! \nd \widebar{z}$ denote the area  form on $\BC$ (one may consider it as twice the ordinary Lebesgue measure). We shall usually write $\nd \mathrm{A} = \nd \mathrm{A} (z)$ so as to avoid ambiguity. 

It will be convenient to introduce $e_F [ z ] = e [ z / \sqrt{d_F}]$.

As the class number $h_F = 1$, we shall let $(n)$ denote non-zero integral ideals in $\SO$ for $n \in \SO \smallsetminus \{0\}$. 

For  $ (t, m) \in \BR \times w_F\cdot\BZ $, we define 
 the Gr\"ossencharakter $\upvchi_{it, m} $ on $ \SO \smallsetminus \{ 0 \} $: 
\begin{align} 
	\upvchi_{it, m} (n) = |n|^{it} (n / |n|)^{m}, 
\end{align} and the analytic conductor: 
\begin{equation}\label{2def: analytic conductor}
	\mathrm{C} (t,m)=1+|it+m|.
\end{equation}

\subsection{Definition of Bessel Functions for $\mathrm{PGL}_2(\BC)$} \label{2sec: Bessel}
For $(\vnu,\eqq)\in \BC\times\BZ$, let $\uppii_{\vnu,\eqq}$ be the associated principal series of $\mathrm{PGL}_2(\BC)$: the unique infinite dimensional constituent of the parabolic induction of the character
\begin{equation*}
	\upvchi_{\vnu,\eqq} : \begin{pmatrix}
		\sqrt{z} & v\\  & 1/\sqrt{z}
	\end{pmatrix}
	\ra  |z|^{\vnu} (z / |z|)^\eqq.
\end{equation*}
Note that $ \uppii_{\vnu,\eqq} \approx \uppii_{- \vnu, -\eqq}  $. It is well-known that the unitary representations of $\mathrm{PGL}_2(\BC)$ are classified into two categories:  for $ \vnu $ imaginary, $\uppii_{\vnu,\eqq}$ are the unitary principal series, while, for $ \vnu \in (0, 1/2) $,   $\uppii_{\vnu,0}$ are the unitary complementary series. 

As in \cite{Watson}, let $J_{\vnu} (z) $ be the Bessel function: 
\begin{align*}
	J_{\vnu} (z) = \sum_{n=0}^{\infty} \frac {(-1)^n (z/2)^{\vnu +2n}} {n! \Gamma (\vnu + n + 1)}. 
\end{align*}   
According to \cite[\S\S 15.3, 18.2]{Qi-Bessel} (the parametrization here has been simplified as the central character is trivial), let the Bessel function  attached to $\uppii_{\vnu,\eqq}$ be defined by
\begin{equation}\label{3eq: expression for Bessel kernel}
	\boldsymbol{J}_{\vnu,\eqq}(z) = \frac{2\pi^2} {\sin(\pi \vnu)}\left( J_{-\vnu,-\eqq}(4\pi z) - J_{\vnu,\eqq}(4\pi z) \right),
\end{equation}
where
\begin{equation}\label{3eq: defintion of complex J-Bessel}
	J_{\vnu,\eqq}(z) = J_{\vnu+\eqq}(z)J_{\vnu-\eqq}(\bar z).
\end{equation}
The Bessel functions $J_{\vnu,\eqq}(z) $ and  $\boldsymbol{J}_{\vnu,\eqq}(z)$ are even real analytic functions on $\BC \smallsetminus \{ 0 \}$. Alternatively, by the relations in \cite[3.61 (1, 2)]{Watson}, we may express $ \boldsymbol{J}_{\vnu,\eqq}(z) $ in terms of Hankel functions: 
\begin{equation}\label{2eq: expression for Bessel kernel via Hankel}
	\boldsymbol{J}_{\vnu,\eqq}(z)
	= \pi^2 i \big(e^{\pi i\vnu}H_{\vnu, \eqq}^{(1)}(4\pi z) 
	-e^{-\pi i\vnu}H_{\vnu, \eqq}^{(2)}(4\pi z) \big),
\end{equation}
where
\begin{equation}\label{3eq: Hankel}
	H^{(1, 2)}_{\vnu,\eqq}(z) = H^{(1, 2)}_{\vnu+\eqq}(z) H^{(1, 2)}_{\vnu-\eqq}(\bar z).
\end{equation}

\begin{remark}
	Actually,  in \cite[\S 18.2]{Qi-Bessel}, the attached Bessel function is normalized to be $ |z| \cdot \boldsymbol{J}_{\vnu,\eqq}(\sqrt{z}) $, as the integral kernel for the action of Weyl element in the Kirillov model of $\uppii_{\vnu,\eqq}$.  
\end{remark}

	\subsection{Cuspidal Representations on $  \mathrm{PGL}_2({\protect \SO}) \backslash\mathrm{PGL}_2(\BC) $}\label{2sec: Automorphic forms}

Fix an irreducible cuspidal automorphic representation $\uppii \subset L_{\text{cusp}}^2(\mathrm{PGL}_2(\SO)\backslash\mathrm{PGL}_2(\BC))$. Let $(\vnu,\eqq)$ be the Archimedean parameter of $\uppii$ so that   $\uppii\simeq\uppii_{\vnu,\eqq}$ as representations of $  \mathrm{PGL}_2(\BC) $. Assume that $ \uppii $ is Hecke invariant. Let $\lambda_\uppii(n)$ be the Hecke eigenvalues of $\uppii$. By the $\mathrm{PGL}_2(\SO)$-invariance, these   $\lambda_\uppii(n)$ are real-valued and depend only on the ideal $(n)$. Note that the Rankin--Selberg theory yields the Ramanujan bound on average:
\begin{equation}\label{4eq: Ramanujan bound}
	\sum_{\mathrm{N}(n) \leqslant X}\left|\lambda_\uppii(n)\right|^2\Lt_\uppii X.
\end{equation}
Moreover, it is well-known that $\uppii$ is self-dual since it has trivial central character. 

\subsection{Twisted $L$-functions} 

	For a Gr\"ossencharakter $\upvchi_{it, m} $, define the $L$-function associated with  $\uppii \times \upvchi_{it, m}$ by
\begin{equation*}
	L(s, \uppii \times \upvchi_{it, m}) = \sum_{(n) \subset \SO} \frac{\lambda_\uppii(n) \upvchi_{it, m}(n)}{\RN(n)^s}, \quad (\Re(s) > 1).
\end{equation*}
According to \cite{Knapp}, the gamma factor of $\uppii \times \upvchi_{it, m}$ is equal to $(2\pi)^{-2s}\gamma(s,\vnu, \eqq, t, m)$, with
\begin{equation*}
	\gamma(s, \vnu, \eqq,  t, m) = \Gamma\left(s +\frac{it+\vnu+|m+\eqq|}{2}\right)\Gamma\left(s+ \frac{it-\vnu+|m-\eqq|}{2}\right).
\end{equation*}
It is known that $L(s, \uppii \times \upvchi_{it, m})$ is entire and satisfies the functional equation
\begin{equation*}
	\Lambda(s, \uppii \times \upvchi_{it, m}) = \epsilon (q, m) \Lambda(1-s,  {\uppii} \times  {\upvchi}_{-it, -m}),
\end{equation*}
where the completed $L$-function 
\begin{equation*}
	\Lambda(s, \uppii \times \upvchi_{it, m}) = \sigma_F^{-2s} \gamma(s,\vnu, \eqq, t, m) L(s, \uppii \times \upvchi_{it, m}), \qquad \sigma_F = 2\pi / \sqrt{|d_F|},
\end{equation*}
and the root number $$\epsilon (\eqq, m) = (-1)^{\max \{|\eqq|, |m|\}}. $$

\subsection{Poisson Summation Formula} 

The following Poisson summation formula is a special case of \cite[VII.3 (3.2)]{Neukirch-ANT}.

\begin{lem}\label{lem: Poisson}
	Let $\varww \in C_c^{\infty} (\BC)$. Then 
	\begin{align}
		\frac 1 {\sqrt{|d_{F}|}}	\sum_{m \in \SO'} \varww (m) =\sum_{n \in \SO } \hat {\varww} (n),
	\end{align}
	where $ \hat {\varww} $ is the complex Fourier transform of $ \varww $ defined to be 
	\begin{align}\label{5eq: Fourier}
		\hat {\varww} (u) = \viint_{\BC} \varww (z) e[ {u} z] \nd \mathrm{A}, \qquad  (\text{$u \in \BC$}).
	\end{align}
\end{lem}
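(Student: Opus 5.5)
We derive Lemma \ref{lem: Poisson} from the classical Poisson summation formula on $\BR^2$, using the identification $\BC \cong \BR^2$ and checking that the measure, the character and the dual lattice are normalized consistently. The lattice in question is $\SO \subset \BC$. Its dual with respect to the pairing $(z,u) \mapsto \Tr(zu) = 2\,\Re(zu)$ is $\{u : \Tr(u\SO) \subset \BZ\} = \SO'$, the inverse different, which is generated by $1/\sqrt{d_F}$ as noted in \S\ref{sec: notation}. Lattice Poisson summation for $\varww \in C_c^\infty(\BC)$ then reads
\begin{align*}
	\sum_{m \in \SO'} \varww(m) = \frac{1}{\mathrm{vol}(\BC/\SO')} \sum_{n \in \SO} \int_{\BC} \varww(z)\, e(\Tr(nz))\, \nd x\, \nd y ,
\end{align*}
where $\nd x \, \nd y$ is Lebesgue measure. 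Since the right-hand side is symmetric under $n \mapsto -n$, the sign convention in the character does not matter.

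To prove this, we would form the periodization $P(z) = \sum_{m \in \SO'} \varww(z+m)$. It is a smooth function on the compact torus $\BC/\SO'$, because $\varww$ has compact support and so the sum is locally finite. Its Fourier coefficients are indexed by the dual lattice of $\SO'$ under $\Tr$, which is $\SO$ since $\SO'' = \SO$. Unfolding the integral over the torus gives the $n$-th coefficient as $\mathrm{vol}(\BC/\SO')^{-1}\int_{\BC} \varww(z)\, e(-\Tr(nz))\,\nd x\,\nd y$. Because $P$ is smooth, its Fourier series converges absolutely. Evaluating at $z=0$ yields the displayed identity.

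It remains to match the constants. We have $\mathrm{vol}(\BC/\SO) = \sqrt{|d_F|}/2$ in Lebesgue measure, so $\mathrm{vol}(\BC/\SO') = \mathrm{vol}(\BC/\SO)/\RN(\sqrt{d_F}) = \frac{1}{2\sqrt{|d_F|}}$. On the other hand, $\nd \mathrm{A} = i\,\nd z\wedge\nd\widebar z = 2\,\nd x\,\nd y$. Therefore
\begin{align*}
	\frac{1}{\mathrm{vol}(\BC/\SO')}\int_{\BC} \varww(z)\, e[nz]\,\nd x\,\nd y = 2\sqrt{|d_F|}\cdot\tfrac12\, \hat\varww(n) = \sqrt{|d_F|}\,\hat\varww(n).
\end{align*}
Dividing both sides by $\sqrt{|d_F|}$ gives exactly the formula of Lemma \ref{lem: Poisson}. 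The only real obstacle is bookkeeping: we must correctly identify $\SO'$ as the trace-dual lattice, and we must track the factor $2$ both in $\nd\mathrm{A}$ and in the covolume of $\SO'$. The analytic part is the standard Poisson argument, or equivalently a citation of \cite[VII.3 (3.2)]{Neukirch-ANT} specialized to $F$.
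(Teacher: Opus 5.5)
Your argument is correct and is essentially the paper's own: the paper simply cites the lattice Poisson summation formula of Neukirch (VII.3 (3.2)), and that formula is proved by the same periodization argument you sketch. You also do the constant bookkeeping that the citation leaves implicit: the trace-dual of $\SO'$ is $\SO$, $\mathrm{vol}(\BC/\SO') = 1/(2\sqrt{|d_F|})$, and $\nd \mathrm{A} = 2\,\nd x\,\nd y$, which together correctly give the factor $1/\sqrt{|d_F|}$.
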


\begin{cor}\label{cor: Poisson}
	Let $\varww \in C_c^{\infty} (\BC)$. For non-zero $ c \in \SO$  let $\phi (\hskip 1pt \cdot \hskip 1pt | c)$ be a function on $\SO  / c \shskip \SO$. Then
	\begin{align}\label{5eq: Poisson Cor}
		\sum_{n \in \SO} \phi (n| c)  \varww (n) = \frac 1 {\sqrt{|d_{F}|} \RN (c)}  \sum_{ n \in \SO} \hat {\phi }  (n|c) \hat{\varww} (n/ \sqrt{d_F} c), 
	\end{align}
	where $\hat{\varww} $ is as defined in {\rm\eqref{5eq: Fourier}}, and
	\begin{align}
		\hat {\phi }  (n|c) =  \sum_{a \in \SO  / c \shskip \SO } \phi (a|c) e_F \Big[ \hskip -1pt   - \frac { {n} a } { c} \Big] .  
	\end{align}
\end{cor}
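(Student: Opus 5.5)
The plan is to derive Corollary \ref{cor: Poisson} from Lemma \ref{lem: Poisson}. We split the sum over $n \in \SO$ according to residue classes modulo $c$, apply the lemma to each class, and then recombine the classes using additive characters.

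First, write $n = a + c k$ with $a$ running over a fixed set of representatives of $\SO/c\shskip\SO$ and $k \in \SO$. The left-hand side becomes
\begin{align*}
\sum_{a \in \SO/c\shskip\SO} \phi(a|c) \sum_{k \in \SO} \varww(a + c k).
\end{align*}
To bring the inner sum into the shape of Lemma \ref{lem: Poisson}, recall that $\SO' = \SO/\sqrt{d_F}$. Then $k \in \SO$ exactly when $m = k/\sqrt{d_F} \in \SO'$. Put $\varww_a(z) = \varww(a + c\sqrt{d_F}\, z)$, which is again in $C_c^\infty(\BC)$. The inner sum is then $\sum_{m\in \SO'} \varww_a(m)$, and Lemma \ref{lem: Poisson} gives
\begin{align*}
\sum_{m\in\SO'} \varww_a(m) = \sqrt{|d_F|} \sum_{n\in\SO} \hat\varww_a(n).
\end{align*}

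Next we compute $\hat\varww_a$ by the change of variables $w = a + c\sqrt{d_F}\, z$. The area form transforms by $\nd \mathrm{A}(w) = |c\sqrt{d_F}|^2 \nd \mathrm{A}(z) = |d_F|\,\RN(c)\,\nd\mathrm{A}(z)$. Also $e[nz] = e[n(w-a)/(c\sqrt{d_F})] = e_F[-na/c]\, e[nw/(\sqrt{d_F} c)]$. Hence
\begin{align*}
\hat\varww_a(n) = \frac{1}{|d_F|\RN(c)}\, e_F\Big[-\frac{na}{c}\Big]\, \hat\varww\big(n/\sqrt{d_F}c\big).
\end{align*}
Multiplying by $\sqrt{|d_F|}$ produces the factor $1/(\sqrt{|d_F|}\RN(c))$. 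Summing over $a$ with weights $\phi(a|c)$ then assembles $\hat\phi(n|c)$, which is exactly \eqref{5eq: Poisson Cor}.

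The main thing to check is that each $a$-piece is well defined, i.e.\ independent of the chosen representative. Replacing $a$ by $a+cb$ with $b\in\SO$ changes the phase by $e_F[-nb] = e[\Tr(-nb/\sqrt{d_F})]$, which equals $1$ because $nb/\sqrt{d_F} \in \SO'$ has integral trace. The same fact shows that $\hat\phi(n|c)$ depends only on $a$ modulo $c$. Beyond this, one only has to track the normalizing constants ($\sqrt{|d_F|}$ from the lemma, the Jacobian $|d_F|\RN(c)$, and the factor $2$ built into $\nd\mathrm{A}$, which cancels since the same measure appears on both sides). Interchanging the finite sum over $a$ with the absolutely convergent sums over $n$ is harmless, since $\hat\varww$ is Schwartz.
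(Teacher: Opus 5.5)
Your proposal is correct and is essentially the paper's own proof. You split the sum by residue classes $a$ modulo $c$, write the inner sum as a sum of $\varww(a+\sqrt{d_F}\,c\,m)$ over $m\in\SO'$, apply Lemma \ref{lem: Poisson}, and undo the affine substitution to get the factor $1/(|d_F|\RN(c))$ and the phase $e_F[-na/c]$. There is one notational slip with no effect on the conclusion: in your well-definedness check, $e_F[-nb]$ equals $e(\Tr(-nb/\sqrt{d_F}))$, not $e[\Tr(-nb/\sqrt{d_F})]$.
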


\begin{proof}
	We  split the left-hand side of \eqref{5eq: Poisson Cor} according to the residue classes in  $\SO  / c \shskip \SO$ as follows:
	\begin{align*}
		\sum_{a \in \SO / c \shskip \SO } \phi (a | c) \sum_{m \in \SO'} \varww (a  + \sqrt{d_F} c m  ) . 
	\end{align*}
	By applying the Poisson summation in Lemma \ref{lem: Poisson} to the inner sum, we obtain 
	\begin{equation*}
		\begin{split}
			& \sqrt{|d_{F}|} \sum_{a \in \SO / c \shskip \SO} \phi (a | c) \sum_{ n \in \SO} \viint_{\BC}  \varww ( a  +   \sqrt{d_F}  c z ) e [{n} z] \nd \mathrm{A} \\
		= {} & \sqrt{|d_{F}|} \sum_{a \in \SO  / c \shskip \SO} \phi (a | c) \sum_{ n \in \SO} \frac 1 {|\sqrt{d_F} c|^2 } \viint_{\BC}  \varww (  z) e \bigg[  \frac { {n} z } {\sqrt{d_F} c}  - \frac { {n} a } { \sqrt{d_F}  c}   \bigg]  \nd \mathrm{A},
		\end{split}
	\end{equation*}
	and hence the right-hand side of \eqref{5eq: Poisson Cor}. 
\end{proof}

\subsection{Vorono\"i Summation Formula}

We record here the Vorono\"i summation formula in \cite[Proposition 3.4]{Qi-Wilton}, which is a special case of \cite[Theorem 1]{Ichino-Templier} translated into the classical language.

	\begin{lem}\label{2lem: Voronoi summation formula}
	As in \S \ref{2sec: Automorphic forms}, let $\uppii$ be a cuspidal representation with Hecke eigenvalues $\lambda_\uppii(n)$ and spectral parameters $(\vnu,\eqq)$. Let $a,c\in\SO$ with $c\neq0$ and $(a,c)=1$. Let $\varww \in \allowbreak C_c^{\infty} (\BC \smallsetminus \{0\} )$.  Then 
	\begin{equation}
		\sum_{n\in\SO\smallsetminus\{0\}} \lambda_\uppii(n)e_F\lb \frac{n a}{c}\rb \varww(n)=\frac{1}{|d_F| \RN(c)} \sum_{n\in\SO\smallsetminus\{0\}} \lambda_\uppii(n) e_F\lb -\frac{n \overline{a}}{c}\rb \breve{\varww} \bigg(  \frac{n}{ {d_F}  c^2 }\bigg) ,
	\end{equation}
	where $\widebar{a} a\equiv 1 (\mod c)$, and $\breve{\varww}$ is the Hankel transform of $\varww$ defined to be 
	\begin{equation}
		\breve{\varww}(u) =  2\pi^2\viint_{\BC} \varww(z) \boldsymbol{J}_{\vnu,\eqq}(\sqrt{u z} )\nd \mathrm{A}, \qquad  (\text{$u \in \BC\smallsetminus \{0\}$}), 
	\end{equation}
	with $\boldsymbol{J}_{\vnu,\eqq}(z)$ the Bessel kernel defined in \S \ref{2sec: Bessel}.
\end{lem}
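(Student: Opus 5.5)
The statement is the Vorono\"i summation formula of Lemma \ref{2lem: Voronoi summation formula}. The plan is to derive it from the adelic Vorono\"i formula of Ichino--Templier, \cite[Theorem 1]{Ichino-Templier}, following the route of \cite[Proposition 3.4]{Qi-Wilton}. We specialize to $\mathrm{GL}_2$ over $F$, with the cuspidal representation $\uppii$ unramified at every finite place and Archimedean component $\uppii_{\vnu,\eqq}$.

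First, we set up the local data. The adelic formula has the shape
\[
\sum_{\gamma\in F^\times} \psi(\gamma a/c)\, W_\uppii(\gamma)\, \varww(\gamma) \;=\; \sum_{\gamma\in F^\times} \text{(dual additive twist)}\cdot \text{(Hankel transform at the relevant places)}.
\]
At the places $\mathfrak{p}\nmid c$ we take the spherical vector; this gives the Whittaker values $\lambda_\uppii(n)$ supported on $n\in\SO$. At the primes dividing $c$, the local Hankel transform of the character $x\mapsto\psi_{\mathfrak p}(xa/c)$ is computed by the unramified local functional equation. It produces $\psi_{\mathfrak p}(-x\bar a/c)$, a factor $|c|_{\mathfrak p}$, and the support condition that $x c^2$ be integral, up to the different. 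Since $\SO'=(1/\sqrt{d_F})\SO$ and $h_F=1$, the different contributes the factors $d_F$. Together these give the argument $n/(d_F c^2)$, the additive character $e_F[\cdot]$ (which is $\psi_\infty$ normalized so that $\SO$ is self-dual relative to $\SO'$), and the normalization $1/(|d_F|\RN(c))$.

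Second, at the Archimedean place we identify the Hankel transform. By \cite[\S\S 15.3, 18.2]{Qi-Bessel}, the Weyl element acts in the Kirillov model of $\uppii_{\vnu,\eqq}$ by an integral kernel, which is $|z|\,\boldsymbol{J}_{\vnu,\eqq}(\sqrt z)$ in the normalization recorded in the Remark after \eqref{2eq: expression for Bessel kernel via Hankel}. Converting from the Kirillov variable to the variable $u z$ and using the measure $\nd\mathrm{A}$, which is twice Lebesgue measure, produces $\breve{\varww}(u)=2\pi^2\viint\varww(z)\boldsymbol{J}_{\vnu,\eqq}(\sqrt{uz})\,\nd\mathrm{A}$. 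The square-root ambiguity is harmless because $\boldsymbol{J}_{\vnu,\eqq}$ is even.

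The main obstacle is the bookkeeping of normalizations. This covers the Haar measures, the $2\pi^2$ and $4\pi$ scalings inside $\boldsymbol{J}$, the self-dual measure against $e[\cdot]$ versus $e_F[\cdot]$, and the powers of $d_F$. I would fix these by testing the formula on a degenerate case: the Eisenstein analogue with $c=1$, for which both sides reduce through Mellin transforms to the functional equation of $\Lambda(s,\uppii\times\upvchi_{0,m})$ with gamma factor $\gamma(s,\vnu,\eqq,0,m)$ and $\sigma_F=2\pi/\sqrt{|d_F|}$. Matching the Mellin transform of $\boldsymbol{J}_{\vnu,\eqq}$ against the ratio of gamma factors then pins down every constant.

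An alternative classical route for $c\ne1$ is to twist by additive characters mod $c$. We would expand $e_F[na/c]$ via Hecke characters mod $c$, which is possible since the class number is one, and apply the functional equation of each twist. Only the trivial conductor survives cleanly for level-one $\uppii$, so I prefer the adelic argument.
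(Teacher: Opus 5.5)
Your route is the paper's: the lemma is \cite[Proposition 3.4]{Qi-Wilton} (itself a special case of \cite[Theorem 1]{Ichino-Templier}) rewritten, and the paper's proof is only the dictionary $\alpha=a$, $\beta=c$, $\mathfrak{b}=(c)$, with $\gamma=n/\sqrt{d_F}\in\SO'$ on the left, $\gamma=n/(\sqrt{d_F}\,c^2)\in\mathfrak{b}^{-2}\SO'$ on the right, and $\tilde f(\gamma)=|d_F|^{-1}\breve{\varww}(n/(d_Fc^2))$. Citing that already-classical statement directly makes your recomputation of the local transforms and your $c=1$ check of the normalizations unnecessary, but you should say explicitly that the dual side carries the coefficients of the contragredient $\widetilde{\uppii}$, which equal $\lambda_\uppii(n)$ only because $\uppii$ is self-dual (it has trivial central character).
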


\begin{proof}
	It suffices to match our notation (in the case $h_F = 1$) with that in \cite[Proposition 3.4]{Qi-Wilton}. To this end,  set $\valpha = a$ and $\beta = c$; the condition $(a,c) = 1$ yields $\mathfrak{b} = (c)$. On the left side, we set $\gamma = n/\sqrt{d_F} \in \SO' $ so that $$A_\pi(\gamma\mathfrak{D}) = \lambda_\uppii(n), \quad f(\gamma) = \varww(n), \quad \psi_f \lp \frac {\alpha\gamma} \beta \rp = e_F\lb \frac{na} c \rb.$$ On the right side, we set $\gamma = n/ \sqrt{d_F} c^2 \in \mathfrak{b}^{-2} \SO'$ so that $$A_{\widetilde{\pi}}(\gamma\mathfrak{b}^2 \mathfrak{D}) = \lambda_\uppii(n), \quad \psi_{\mathfrak{b}}\left(-\frac {\widebar{\valpha}\beta^2\gamma} \beta\right) = e_F\left[ - \frac {n \widebar{a}} c \right], \quad \widetilde{f}(\gamma) = \frac 1 {|d_F| } \breve{\varww} \bigg(\frac n  {d_F c^2}  \bigg). $$
	Note here that $\uppii$ is self-dual as it has trivial central character. 
\end{proof}

\section{Reduction via Approximate Functional Equation}

 The  approximate functional equation for $L(1/2,\uppii\times \upvchi_{it,m}) $ in the next lemma is a direct consequence of \cite[Theorem 2.1]{Harcos-AFE}.

\begin{lem}\label{3lem: Approximate Functional Equation}
  Let $ F   \in C^\infty(\BR_+)$ be real-valued, such that $ F(x)+ F(1/x)=1$ and that $ F^{(j)} (x) $ decays  rapidly as $ x \ra  \infty$ for every $j$. 
	Then
	\begin{equation*}
		\begin{split}
			L(1/2,\uppii\times \upvchi_{it,m})   = & \sum_{(n)\subset\SO}\frac{\lambda_\uppii(n)\upvchi_{it,m}(n)}{|n|} F \bigg(\frac{|n|}{\RC(t,m)} \bigg)\\
			+  \epsilon (\vnu, t ; q, m)  & \sum_{ (n)\subset\SO}\frac{\lambda_\uppii(n)\upvchi_{-it,-m}(n)}{|n|} F \bigg(\frac{|n|}{\RC(t,m)}\bigg)+O_{\vepsilon, \uppii , F }\big(   {\RC(t,m)^{  \vepsilon}} \big),
		\end{split}
	\end{equation*}
	where $\epsilon (\vnu, t ; q, m) $ is of unity norm, explicitly given by 
	\begin{equation*}
		\epsilon (\vnu, t ; q, m) = \epsilon ( q, m) \cdot   \frac{\gamma(1/2, \vnu, \eqq, - t, - m)}{ \gamma(1/2 , \vnu, \eqq,  t, m) } . 
	\end{equation*}
\end{lem}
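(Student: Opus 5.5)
The plan is to apply the general approximate functional equation of \cite[Theorem 2.1]{Harcos-AFE} to the completed $L$-function $\Lambda(s, \uppii \times \upvchi_{it,m})$ and then simplify the resulting weight functions. First I rewrite the functional equation in the form that theorem requires. Since $\uppii$ is self-dual, the dual side is $\Lambda(1-s, \uppii\times\upvchi_{-it,-m})$. Its Dirichlet coefficients are $\lambda_\uppii(n)\upvchi_{-it,-m}(n)$, and its gamma factor is $\gamma(s,\vnu,\eqq,-t,-m)$, because conjugating the character flips $(t,m)\mapsto(-t,-m)$.

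Harcos' theorem gives, with any fixed weight $F$ satisfying $F(x)+F(1/x)=1$, an expression
\begin{equation*}
L(1/2) = \sum_{(n)} \frac{\lambda_\uppii(n)\upvchi_{it,m}(n)}{\RN(n)^{1/2}} F\Big(\frac{\RN(n)^{1/2}}{\RC}\Big) + \epsilon' \sum_{(n)} \frac{\lambda_\uppii(n)\upvchi_{-it,-m}(n)}{\RN(n)^{1/2}} F\Big(\frac{\RN(n)^{1/2}}{\RC}\Big) + O\big(\RC^{\vepsilon}\big).
\end{equation*}
Here the scale $\RC$ is the square root of the analytic conductor, normalised in the $|n|$ variable. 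Since $\RN(n)^{1/2}=|n|$, the summands match those in the statement. The root-number ratio $\epsilon'$ is $\epsilon(\eqq,m)$ times the ratio of the gamma factors at $1/2$. These two gamma factors are complex conjugates up to the sign in $\vnu$, hence of modulus one once $\vnu$ is imaginary; the complementary-series case is handled in the same way, using the symmetry $\uppii_{\vnu,\eqq}\approx\uppii_{-\vnu,-\eqq}$. The point of Harcos' formulation is that the error term absorbs the discrepancy between the true smooth cutoffs (Mellin integrals of gamma ratios) and the fixed $F$, at a cost of $\RC^\vepsilon$, uniformly in the archimedean parameters.

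The main step to check is the hypothesis of \cite[Theorem 2.1]{Harcos-AFE}. It requires the gamma factor $\gamma(s,\vnu,\eqq,t,m)$ to satisfy Stirling-type bounds showing that the conductor at $s=1/2$ is $\asymp \RC(t,m)^2$ in the $\RN(n)$ scale, with $\vnu,\eqq$ fixed. By Stirling, each factor $\Gamma\big(s+\tfrac{it\pm\vnu+|m\pm\eqq|}{2}\big)$ has conductor roughly $1+|it+|m||$, and this is $\asymp 1+|it+m| = \RC(t,m)$ uniformly. Hence the product has conductor $\asymp \RC(t,m)^2$, which gives the scale $|n|\approx\RC(t,m)$.

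Finally, the sums over ideals are well defined because $m\in w_F\BZ$ makes $\upvchi_{it,m}$ trivial on units. With these checks in place, the lemma follows directly from \cite[Theorem 2.1]{Harcos-AFE}, with the implied constant depending on $\vepsilon$, $\uppii$ and $F$.
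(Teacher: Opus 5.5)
Your proposal takes the same route as the paper, which proves this lemma simply by invoking \cite[Theorem 2.1]{Harcos-AFE}. Your side checks are fine: the dual side is $\uppii\times\upvchi_{-it,-m}$ by self-duality, $|\epsilon(\vnu,t;\eqq,m)|=1$ for both principal and complementary series, and the summands depend only on $(n)$ because $m\in w_F\BZ$. One step, however, does not go through as written.

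You pass from ``the conductor is $\asymp\RC(t,m)^2$'' to ``the cutoff is $\RC(t,m)$''. (In the $\RN(n)$-scale the conductor is in fact $\asymp\RC(t,m)^4$, since each complex factor $\Gamma(s+\cdot)$ contributes $\asymp\RC(t,m)^2$. It is $\asymp\RC(t,m)^2$ only in the $|n|$-scale, which is what your conclusion actually uses.) Comparability is not enough for an approximate functional equation with the \emph{same} cutoff in both sums and error $O(\RC^{\vepsilon})$. Put $A=\frac{it+\vnu+|m+\eqq|}{2}$ and $B=\frac{it-\vnu+|m-\eqq|}{2}$. Stirling gives
\[ \sigma_F^{4u}\,\frac{\gamma(\frac12-u,\vnu,\eqq,-t,-m)}{\gamma(\frac12+u,\vnu,\eqq,t,m)}=\frac{\gamma(\frac12,\vnu,\eqq,-t,-m)}{\gamma(\frac12,\vnu,\eqq,t,m)}\,Y_0^{-4u}\Big(1+O\big((1+|u|)^2\RC(t,m)^{-1}\big)\Big) \]
with $Y_0=\sigma_F^{-1}|\frac12+A|^{1/2}|\frac12+B|^{1/2}=\frac{\sqrt{|d_F|}}{4\pi}\RC(t,m)\big(1+O(\RC(t,m)^{-1})\big)$. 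The $O(\RC^{\vepsilon})$ term comes from this relative error $O(\RC(t,m)^{-1})$ set against the convexity bound $\RC(t,m)^{1+\vepsilon}$ near $s=1/2$.

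Shifting contours with a cutoff $Y$ puts the first sum at $F(|n|/Y)$ and the dual sum at $F(|n|Y/Y_0^2)$, so the two cutoffs coincide only for $Y=Y_0$. Now move the common cutoff to $cY_0$ with a fixed $c\neq1$. A Mellin computation shows the right-hand side changes, up to $O(\RC^{\vepsilon})$, by $\sum_{(n)}\lambda_\uppii(n)\upvchi_{it,m}(n)|n|^{-1}H(|n|/Y_0)$, where $H(x)=F(x/c)-F(cx)$ is a fixed bump. That is a smooth sum of length $\asymp\RC(t,m)$, exactly the kind of sum the paper sets out to bound (by $\RC^{2/3+\vepsilon}$), and it is not known to be $O(\RC^{\vepsilon})$.

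So the cutoff must be $Y_0$, which differs from $\RC(t,m)$ by the fixed factor $\sqrt{|d_F|}/(4\pi)$. This looseness is already in the paper's statement of the lemma, and it is harmless there: after the dyadic partition, only the length $|n|\ll\RC(t,m)^{1+\vepsilon}$ of the sums is used. Your justification, though, should apply Harcos' theorem at the exact balancing point $Y_0$, not at a quantity merely comparable to it.
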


By applying a dyadic partition of unity to the approximate functional equation above, we infer that 
\begin{align*}
		L(1/2,\uppii\times \upvchi_{it,m})
	\Lt 
	\RC(t,m)^\vepsilon
	\bigg( 
	\frac{|S_{\uppii} (N; t, m)|}{N}
	+ 1
	\bigg) , 
\end{align*}
for some $ N < \RC(t,m)^{1+\vepsilon} $, where 
\begin{align}\label{3eq: reduction after AFE}
	S_{\uppii} (N; t, m) =  \sum_{n \in \SO} \lambda_\uppii(n)\upvchi_{it,m}(n)V \bigg(\frac {|n|} {N}  \bigg) , 
\end{align}
and   $ V \in C_c^{\infty} [1, 2]$ is some weight function with bounds $ V^{(j)} (x) \Lt_{j}  1$. Note that $ S_{\uppii} (N; t, m) \allowbreak = O_{\uppii} (N^2 ) $ follows trivially from the Cauchy inequality and the averaged Ramanujan bound in \eqref{4eq: Ramanujan bound}. 

The hybrid Weyl subconvexity bound in Theorem \ref{1thm: Weyl bound} is a direct consequence of  the bound below for $S_{\uppii} (N; t, m) $ in Theorem \ref{thm: bound for S(N)} as it implies 
\begin{align*}
	 \frac{|S_{\uppii} (N; t, m)|}{N} \Lt \RC(t,m)^{2/3+\vepsilon}, 
\end{align*}
for any  $\RC(t,m)^{2/3+\vepsilon} < N < \RC(t,m)^{1+\vepsilon} $ (the trivial bound is better and sufficient for   $ N \leqslant \RC(t,m)^{2/3+\vepsilon}$).

\begin{theorem}\label{thm: bound for S(N)}
	 Let $ V \in C_c^{\infty} [1, 2]$ such that $ V^{(j) } (x) \Lt_j 1 $. Then for $ N < \RC(t,m)^{1+\vepsilon} $, we have 
	 \begin{align}\label{3eq: bound for S(N)}
	 	\sum_{n \in \SO} \lambda_\uppii(n)\upvchi_{it,m}(n)V \bigg(\frac {|n|} {N}  \bigg) \Lt_{\vepsilon, \uppii} N^{1+\vepsilon} \RC(t,m)^{2/3} . 
	 \end{align}
\end{theorem}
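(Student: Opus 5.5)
Following the introduction, I will prove Theorem \ref{thm: bound for S(N)} with the Bessel $\delta$-method, in the spirit of \cite{AHLQ-Bessel} transplanted to $F$. Write $T=\RC(t,m)$ and assume $T^{2/3}<N<T^{1+\vepsilon}$; otherwise the trivial bound $O(N^2)$ suffices.

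\textbf{Step 1: separate the two oscillating factors.} I first establish a Bessel $\delta$-identity over $F$, presumably the content of \S\ref{sec: Bessel delta identity}. This is a Kuznetsov/Petersson-type formula for the trivial representation on $\mathrm{PGL}_2(\SO)\backslash\mathrm{PGL}_2(\BC)$. With a parameter $X$ (I would choose $X=T^{2/3}$ or a power close to it), it should express a smooth detection of $n=r$, for $|n|,|r|\asymp N$, as
\begin{equation*}
\sum_{c}\frac{1}{\RN(c)}\,\sum_{a \,(\mathrm{mod}\, c)}{}^{\!\star}\, e_F\Big[\frac{a(n-r)}{c}\Big]\cdot h\Big(\frac{\sqrt{nr}}{c},X\Big),
\end{equation*}
plus a negligible error. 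Here $h$ is built from $\boldsymbol{J}_{\vnu,\eqq}$-type kernels of Bessel functions of order about $X$.

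Inserting this into \eqref{3eq: bound for S(N)}, I replace $\lambda_\uppii(n)\upvchi_{it,m}(n)$ by a double sum $\sum_n\sum_r\lambda_\uppii(n)\upvchi_{it,m}(r)$ weighted by the identity. The point of the identity is that the Bessel kernel restricts the moduli to $|c|\lesssim N/X$, and this is what makes the choice $X=T^{2/3}$ give the Weyl exponent.

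\textbf{Step 2: dualize both sums.} I apply Lemma \ref{2lem: Voronoi summation formula} to the $n$-sum. This turns $e_F[an/c]$ into $e_F[-\bar a n/c]$, weighted by a Hankel transform against $\boldsymbol{J}_{\vnu,\eqq}$, and the dual length is about $\RN(c)X^2/N$ in norm.

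I apply Corollary \ref{cor: Poisson} to the $r$-sum, using $\upvchi_{it,m}$ as an Archimedean weight. A stationary phase computation in two real variables shows that the dual variable $\tilde r$ is localized near a point determined by $it+m$, of size $|\tilde r|\asymp|c|T/N$, with only $O(1)$ terms contributing.

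The sum over $a$ then collapses to a Ramanujan-type sum, which forces a congruence between $n$ and $\tilde r$. I expect the $a$-sum to reduce to $n\equiv\tilde r \pmod c$, or to a Kloosterman sum if the identity carries one.

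\textbf{Step 3: the Archimedean integral and the final bound.} What remains is an integral combining the Hankel transform, the $\delta$-kernel and the character. I will evaluate it by stationary phase using the Hankel asymptotics \eqref{2eq: expression for Bessel kernel via Hankel}; it should be of size $N^2/(XT)$ times a phase, with $X\asymp$ the ratio fixing the balance. Summing the absolute values of what is left, using \eqref{4eq: Ramanujan bound} for $\lambda_\uppii(n)$ over $\RN(n)\lesssim \RN(c)X^2/N$, should give $N^{1+\vepsilon}(T/X)^{1/2}\cdot(\cdots)$. Optimizing gives $N T^{2/3}$ at $X=T^{2/3}$.

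\textbf{Main obstacle.} The hardest part will be the two-variable stationary phase over $\BC$ in Steps 2 and 3. The Hankel kernel \eqref{3eq: Hankel} is a product of Bessel functions in $z$ and $\bar z$ with different orders $\vnu\pm\eqq$. The character carries both the radial phase $|n|^{it}$ and the angular phase $(n/|n|)^m$. The $t$- and $m$-aspects must therefore be handled uniformly, and the phase is genuinely two-dimensional, so degenerate critical points (for example when $|t|\asymp|m|$) need care.

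My first attempt will be to pass to polar coordinates and treat the angular integral via Bessel function asymptotics of order $m$. This should reduce the estimate to the one-dimensional $t$-aspect analysis of \cite{AHLQ-Bessel}, with $T=|it+m|$ playing the role of $t$.
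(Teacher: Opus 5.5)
You have the right framework (a Bessel $\delta$-identity, Vorono\"i in $n$, Poisson in $r$), but there is a genuine gap: the step that actually produces the Weyl exponent is missing, and the intermediate claims you would rely on are false.

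\textbf{The character sum does not collapse to a congruence.} Vorono\"i sends $e_F[an/c]$ to $e_F[-\bar a n/c]$. Poisson in $r$ modulo $c$ then pins $a\equiv-\tilde r$. What survives is the single additive character $e_F[\bar{\tilde r}n/c]$; in the paper it is $e_F[\bar r n/p]$, after opening the Kloosterman sum $S_F(r,n;p)$.

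\textbf{The dual $\tilde r$-sum is not $O(1)$ terms.} The stationary point moves over the annulus with $z$. Hence every $\tilde r$ in a disc of radius $\asymp |c|T/N$ contributes, giving $\asymp (|c|T/N)^2$ terms. In the paper these are the $|r|\Lt R=PC/N$, each with $\text{\large $\EuScript{J}$}\Lt 1/C$.

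\textbf{Summing absolute values at this stage saves nothing.} In the paper's normalization ($|n|\asymp X$, $|r|\Lt R$, $X=P^2K^2/N$) it gives $S^\star(N,X,P)\Lt P^2KC$, which is worse than the trivial bound $N^2$. So your Step 3 cannot produce $N\RC(t,m)^{2/3}$; the factor $(\cdots)$ conceals the entire argument.

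\textbf{The missing idea is Cauchy--Schwarz followed by a second Poisson summation.} Keep $n$ outside and remove $\lambda_\uppii(n)$ via \eqref{4eq: Ramanujan bound}. Open the square and apply Poisson in $n$ modulo $p_1p_2$. Only now does a congruence appear: $n\equiv \bar r_1p_2-\bar r_2p_1 \pmod{p_1p_2}$. The zero frequency forces $p_1=p_2$ and, by Lemma \ref{8lem: integral} (3), $r_1=r_2$; this contributes $KN$. The non-zero frequencies are controlled by $\CL(v)\Lt 1/(C^2|v|)$, which comes from the four-dimensional second-derivative test (Lemmas \ref{4lem: SDT-4 dimension} and \ref{4lem: analysis of I, 2}); they contribute $CN/\sqrt K$. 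The choice $K=C^{2/3}$, $P=N$ balances the two.

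\textbf{The $\delta$-identity must have a specific shape.} The paper uses a single prime modulus $p$, averaged over $p\sim P$. Its kernel is $\boldsymbol{J}_{\vnu,\eqq}$ of $\uppii$ itself, with fixed order. This makes Vorono\"i modulo $p$ invert the Hankel transform of $I_{\vnu,\eqq}(\sqrt r/p,\sqrt n/p;X)$ exactly, yielding $U(|n/d_F|/X)\,e_F[2\sqrt{nr}/p]$. It is not a Kuznetsov-type formula for the trivial representation with Bessel functions of order $\approx X$ and moduli $|c|\lesssim N/X$. You would still have to construct such an identity, and the heuristic that ``$|c|\lesssim N/X$ gives Weyl'' is not where the saving comes from.

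\textbf{Your ``main obstacle'' is not an obstacle.} The phase $t\rho(z)+m\theta(z)=\frac{1}{2\pi}\mathrm{Re}\big((t-im)\log z\big)$ is harmonic. Its Hessian determinant is therefore $-4|\partial^2 f/\partial z^2|^2$, with $|\partial^2 f/\partial z^2|=|t-im|/(4\pi|z|^2)\asymp C$ uniformly. There are no degenerate critical points when $|t|\asymp|m|$, and uniformity in both aspects comes for free through Srinivasan's test. Polar coordinates and order-$m$ Bessel asymptotics are unnecessary. A reduction to the one-dimensional $t$-aspect analysis is also unavailable, since every sum here runs over the two-dimensional lattice $\SO$.
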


\begin{remark}
	Similar to \cite[Theorem 1.1]{AHLQ-Bessel}, for all values of $N$  we may prove with some effort that
	\begin{align}
		\sum_{n \in \SO} \lambda_\uppii(n)\upvchi_{it,m}(n)V \bigg(\frac {|n|} {N}  \bigg) \Lt_{\vepsilon, \uppii}   N^{1+\vepsilon} \RC(t,m)^{2/3} + 
		\frac{N^{2+\vepsilon}}{\RC(t,m)^{1/3}}.
	\end{align}
\end{remark}

Henceforth, for simplicity,  let us write $S (N) = S_{\uppii} (N; t, m) $ and introduce 
\begin{equation}\label{3eq: rho, theta}
	\rho(z) = \frac{\log|z|}{2\pi},\qquad \theta(z) = \frac{\arg(z)}{2\pi}, 
\end{equation}
so that $S (N) $ turns into the exponential sum
\begin{equation}\label{3eq: S(N)}
	S (N) = \sum_{n \in \SO} \lambda_\uppii(n) e (  t \rho (n) + m \theta (n) ) V \bigg(\frac {|n|} {N}  \bigg) . 
\end{equation}

\section{Analysis of Integrals by Stationary Phase and Weil Identity}\label{sec: analysis of integrals}

In this section, we shall apply the method of stationary phase and the identity of Weil to analyze certain oscillatory integrals that will arise after the applications of Poisson summation. The material here is technical and can be safely skipped on the first reading. 

The lemmas that follow will be stated in terms of Wirtinger derivatives:
\begin{equation*}
	\frac{\partial}{\partial z}=\frac1 2 \bigg( \frac{\partial}{\partial x}-i\frac{\partial}{\partial y} \bigg),\quad  \frac{\partial}{\partial \bar z}=\frac1 2 \bigg( \frac{\partial}{\partial x}+i\frac{\partial}{\partial y}\bigg) , \qquad z = x+iy.
\end{equation*} 
Note that 
\begin{align*}
	\frac{\partial^2 f}{\partial z^2} - \frac{\partial^2 f}{\partial \bar{z}^2} = - i	\frac {\partial^2 f} {\partial x \partial y} , 
\end{align*}
\begin{align*}
\Delta f =	4 \frac {\partial^2 f} {\partial z \partial \bar{z}} =  \frac{\partial^2 f}{\partial x^2} + \frac{\partial^2 f}{\partial y^2} ,     
\end{align*}
\begin{align*}
	  \det \nabla^2 f = 4 \bigg(  \bigg(\frac{\partial^2 f}{\partial z \partial \bar z} \bigg)^2 - \frac{\partial^2 f}{\partial z^2} \cdot \frac{\partial^2 f}{\partial \bar{z}^2}  \bigg) = \frac{\partial^2 f}{\partial x^2} \cdot \frac{\partial^2 f}{\partial y^2} - \bigg(\frac{\partial^2 f}{\partial x \partial y} \bigg)^2,
\end{align*}
where $ \Delta f  $ is the Laplacian and $ \nabla^2 f $ is the Hessian matrix of $f$.

\subsection{Stationary Phase Lemmas}

First,   \cite[Lemma 7.4]{Qi-GL(3)} is rephrased as follows in the Wirtinger notation.   

	\begin{lem}\label{4lem: Stationary Phase}
	Let $D \subset \BC$ be a bounded domain. Let $\varww \in C_c^{\infty} (D)$. Let $f \in C^{\infty} (D) $  be real-valued. Suppose that there are parameters $P, Q, \varUpsilon, \varPhi, R, S, Z > 0$ such that
	\begin{align*} 
		\frac{\partial^{i+j} f(z)}{\partial z^{i} \partial \bar{z}^{j}}  \Lt_{ i,j } \frac{Z}{Q^{i} \varPhi^j}, \qquad \frac{\partial^{k+l} \varww (z)}{\partial z^{k} \partial \bar{z}^{l}} \Lt_{k,l}  \frac{S}{P^{k} \varUpsilon^l},
	\end{align*}
	for all integers $i,j,k,l \geqslant 0$ with $i+j \geqslant 2$, and 
	\begin{align*}
		\left| \frac{\partial f (z)}{\partial z} \right|  \Gt  R . 
	\end{align*}
	Then   
	\begin{align*}
		\viint_D \! e ( f(z) )  \varww ( z ) \nd \mathrm{A} \Lt_{A}   \textit{Area}(D) S \bigg\{ \frac{1}{R} \bigg(  \frac{1}{P}+\frac{1}{\varUpsilon}+\frac{1}{Q}+\frac{1}{\varPhi} \bigg) \! + \frac{Z^2}{R^3}\bigg(  \frac{1}{Q^3}+\frac{1}{\varPhi^3}\bigg) \! \bigg\}^A
	\end{align*}
	for any $A \geqslant 0$.
\end{lem}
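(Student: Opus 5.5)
The plan is to integrate by parts repeatedly in the holomorphic Wirtinger variable, as in the one-dimensional non-stationary phase lemma, with the lower bound $|\partial f/\partial z| \Gt R$ playing the role of the first-derivative lower bound. Since $f$ is real and $\partial f/\partial \bar z = \overline{\partial f/\partial z}$, we have the identity
\begin{align*}
	e(f(z)) = \frac{1}{2\pi i \, \partial_z f(z)} \frac{\partial}{\partial z} e(f(z)).
\end{align*}
As $\varww$ has compact support in $D$, Stokes' theorem in the form $\viint_D \partial_z(g)\, \nd\mathrm{A} = 0$ for compactly supported $g$ gives
\begin{align*}
	\viint_D e(f)\, \varww\, \nd \mathrm{A} = \viint_D e(f)\, \mathcal{D}\varww\, \nd\mathrm{A}, \qquad \mathcal{D} g = - \frac{\partial}{\partial z}\Big( \frac{g}{2\pi i\, \partial_z f}\Big).
\end{align*}
Iterating this $A$ times and bounding trivially yields at most $\textit{Area}(D)\sup|\mathcal{D}^A \varww|$. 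The proof therefore reduces to a pointwise bound for $\mathcal{D}^A\varww$.

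Next I will expand $\mathcal{D}^A\varww$ by the Leibniz rule. By induction on $A$, it is a linear combination, with bounded coefficients, of terms of the form
\begin{align*}
	\frac{\partial_z^{k}\varww \cdot \prod_{r} \partial_z^{i_r} f}{(\partial_z f)^{A+s}},
\end{align*}
where $s$ is the number of factors $\partial_z^{i_r} f$, each $i_r \geqslant 2$, and $k + \sum_r (i_r - 1) = A$. This is the standard bookkeeping: each application of $\partial_z$ hits either $\varww$, one of the existing higher derivatives of $f$, or a power of $1/\partial_z f$, in which case it creates a new factor $\partial_z^2 f/\partial_z f$. The hypotheses only need to be used with pure $z$-derivatives, namely $\partial_z^{i} f \Lt Z/Q^{i}$ for $i \geqslant 2$ and $\partial_z^k \varww \Lt S/P^k$, together with $|\partial_z f| \Gt R$. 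Hence each term is
\begin{align*}
	\Lt S \cdot \frac{1}{(RP)^{k}} \prod_r \frac{Z}{R\, Q^{i_r} R^{i_r - 1}}.
\end{align*}

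The main step, and the only mildly delicate one, is to show that every such term is at most a bounded multiple of $\{ 1/(RP) + 1/(RQ) + Z^2/(R^3Q^3)\}^A$. The total weight $k + \sum_r (i_r-1)$ equals $A$, so it suffices to show that each factor $Z/(R^{i} Q^{i})$, for $i \geqslant 2$, is bounded by $X^{i-1}$, where $X = \max\{1/(RQ), (Z^2/(R^3Q^3))\}$ up to the appropriate power. For $i=2$ this follows from the AM--GM type identity
\begin{align*}
	\frac{Z}{R^2Q^2} = \Big(\frac{Z^2}{R^3Q^3}\Big)^{1/2}\Big(\frac{1}{RQ}\Big)^{1/2}.
\end{align*}
For $i \geqslant 3$ I would write
\begin{align*}
	\frac{Z}{R^iQ^i} = \frac{Z}{R^2Q^2}\Big(\frac{1}{RQ}\Big)^{i-2}
\end{align*}
and reduce to the $i=2$ case. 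This gives the stated bound with only the terms $1/P$, $1/Q$ and $Z^2/(R^3Q^3)$.

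The remaining terms $1/\varUpsilon$, $1/\varPhi$ and $Z^2/(R^3\varPhi^3)$ only enlarge the right-hand side, so the claimed inequality follows. Symmetrically, one could integrate by parts in $\bar z$ using $|\partial_{\bar z} f| = |\partial_z f|$; that would produce the $\varUpsilon, \varPhi$ version, and one may take whichever is better. I expect no further obstacle beyond keeping the combinatorial induction on the form of $\mathcal{D}^A\varww$ clean.
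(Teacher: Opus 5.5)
Your proof is correct and complete. The paper gives no argument of its own and simply quotes the lemma from Qi's earlier work, where it is proven by the same repeated integration by parts; your bookkeeping (weight $k+\sum_r (i_r-1)=A$, denominator $(\partial_z f)^{A+s}$, and the splitting $Z/(RQ)^2=(Z^2/(RQ)^3)^{1/2}(1/(RQ))^{1/2}$) is exactly what is needed. Integrating by parts in $\partial_z$ alone is legitimate because $\partial_z e(f)=2\pi i\,\partial_z f\cdot e(f)$ with $\partial_z f\neq 0$. It even gives a sharper bound involving only $P,Q$ (or only $\varUpsilon,\varPhi$ via $\partial_{\bar z}$), which implies the stated symmetric one. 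For non-integer $A$, use $\lceil A\rceil$ when the bracket is $<1$, and the trivial bound ($A=0$) when it is $\geq 1$.
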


For our applications, we require smooth variants of the multi-dimensional second derivative test due to Srinivasan \cite[Lemma 4]{Srinivasan-Lattice-2}. For this, it requires some finiteness hypotheses on the integral domain and the phase function. 

\begin{hyp}\label{4hyp: domain}
	Assume that $D$ is a bounded region in a Euclidean space, enclosed by finite algebraic surfaces of bounded degree. 
\end{hyp}

\begin{hyp}\label{4hyp: phase} 
	Assume that $f$ is a real-valued smooth function whose first and second derivatives are algebraic of bounded degree. 
\end{hyp}


\begin{lem}\label{4lem: SDT-2 dimension}
	Let $D \subset \BC$  and $f\in C^{\infty} (D) $ satisfy Hypotheses \ref{4hyp: domain} and \ref{4hyp: phase}.  Suppose that $\Delta f = 0$ and that 
	\begin{equation*}
		\left| {\partial^2 f} / {\partial z^2}\right|\Gt \lambda > 0.
	\end{equation*} 
Let $\varww \in C_c^{\infty} (D)$. Define its variation 
	$$ V  = \viint_D \big(\big|  {\partial^2 \varww(z)}/ {\partial z^2} \big| + \big|  {\partial^2 \varww(z)} / {\partial \bar{z}^2} \big| \big) \nd \mathrm{A}. $$ 
	Then 
	\begin{equation*}
		\viint_D e (f(z) )\varww(z) \nd \mathrm{A} \Lt \frac{V}{\lambda}.
	\end{equation*}
\end{lem}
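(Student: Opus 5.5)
The plan is to reduce the lemma to Srinivasan's two-dimensional second derivative test \cite[Lemma 4]{Srinivasan-Lattice-2}, which handles the unweighted integral $\iint_{D'} e(f)\, \nd x\, \nd y$ over algebraic domains $D'$. The smooth weight $\varww$ will then be removed by a summation-by-parts (Fubini) argument.

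\emph{Step 1: the Hessian.} Since $\Delta f = 4\,\partial^2 f/\partial z\partial\bar z = 0$ and $f$ is real, we have $\partial^2 f/\partial \bar z^2 = \overline{\partial^2 f/\partial z^2}$. The Wirtinger identity for the Hessian recorded above then gives
\begin{align*}
\det \nabla^2 f = -4 \left| \partial^2 f/\partial z^2 \right|^2 .
\end{align*}
The trace $\Delta f$ vanishes, so the eigenvalues of $\nabla^2 f$ are $\pm 2|\partial^2 f/\partial z^2|$. Each therefore has absolute value $\geqslant 2\lambda$ throughout $D$. This is exactly the non-degeneracy needed in Srinivasan's lemma, in the indefinite (saddle) case. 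Under Hypotheses \ref{4hyp: domain} and \ref{4hyp: phase}, that lemma gives
\begin{align*}
\iint_{D'} e(f)\, \nd x\, \nd y \Lt 1/\lambda
\end{align*}
uniformly for every subregion $D' \subset D$ cut out by boundedly many further algebraic surfaces of bounded degree. The implied constant depends only on these degrees. I expect the main obstacle to be checking that Srinivasan's hypotheses are met precisely in this saddle case. The finiteness hypotheses are what control the number of monotonicity pieces of $\nabla f$ along lines, so one must make sure the bound is uniform in $D'$.

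\emph{Step 2: removing the weight.} Since $\varww \in C_c^\infty(D)$, we may write, with $z = x+iy$,
\begin{align*}
\varww(x,y) = \int_x^{\infty}\!\! \int_y^{\infty} \frac{\partial^2 \varww}{\partial u \partial v}(u,v)\, \nd v\, \nd u .
\end{align*}
Inserting this and applying Fubini gives
\begin{align*}
\iint_D e(f(z)) \varww(z)\, \nd \mathrm{A} = 2\iint \frac{\partial^2 \varww}{\partial u\partial v}(u,v) \iint_{D \cap \{x\leqslant u,\, y \leqslant v\}} e(f(x,y))\, \nd x\, \nd y\, \nd u\, \nd v ,
\end{align*}
where the factor $2$ comes from $\nd \mathrm{A} = 2\, \nd x\, \nd y$. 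Each region $D\cap\{x\leqslant u, y\leqslant v\}$ is again enclosed by boundedly many algebraic surfaces of bounded degree, so Step 1 bounds the inner integral by $O(1/\lambda)$ uniformly in $(u,v)$.

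\emph{Step 3: conclusion.} It follows that the whole integral is
\begin{align*}
\Lt \frac{1}{\lambda} \iint \left|\frac{\partial^2\varww}{\partial u\partial v}\right| \nd u\, \nd v .
\end{align*}
By the identity $\partial^2 \varww/\partial z^2 - \partial^2 \varww/\partial \bar z^2 = -i\, \partial^2 \varww/\partial x\partial y$, the pointwise bound $|\varww_{xy}| \leqslant |\varww_{zz}| + |\varww_{\bar z\bar z}|$ holds. Hence the last integral is at most a constant times $V$, which yields the bound $V/\lambda$ claimed in the statement.
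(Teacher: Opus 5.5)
Your overall architecture is the paper's. It uses Srinivasan's lemma for the unweighted integral over the cuts $D(x,y)=\{z\in D:\mathrm{Re}(z)\leqslant x,\ \mathrm{Im}(z)\leqslant y\}$, then two integrations by parts against $\partial^2\varww/\partial x\partial y$, then $|\partial^2\varww/\partial x\partial y|\leqslant|\partial^2\varww/\partial z^2|+|\partial^2\varww/\partial\bar z^2|$. Steps 2 and 3 are fine.

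\textbf{The gap is in Step 1}, which you rightly flagged. A lower bound for the eigenvalues of $\nabla^2 f$ (equivalently for $|\partial^2 f/\partial z^2|$) does not give $\viint_{D'}e(f)\Lt 1/\lambda$ uniformly over subregions. It fails even for the cuts used in Step 2. Take $f(x,y)=\lambda xy$: it is real, harmonic and polynomial, with $|\partial^2 f/\partial z^2|=\lambda/2$ and eigenvalues $\pm\lambda$. For $D=[-1,1]^2$ the cut $D(0,0)=[-1,0]^2$ gives
\begin{equation*}
\int_{-1}^{0}\!\!\int_{-1}^{0}e(\lambda xy)\,\nd x\,\nd y=\frac{1}{2\pi i\lambda}\int_0^{\lambda}\frac{e(s)-1}{s}\,\nd s .
\end{equation*}
The real part of the last integral is $-\log\lambda+O(1)$, so the double integral has size $\asymp\lambda^{-1}\log\lambda$, and Step 3 only yields $V\lambda^{-1}\log\lambda$ in this frame. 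The loss comes from the edges of the cut lying on the null lines of the saddle. This is why Srinivasan's lemma is phrased through the principal minors $\partial^2 f/\partial x^2$ (and $\partial^2 f/\partial y^2$) and $\det\nabla^2 f$, in the coordinates that define the cuts, rather than through eigenvalues. These principal minors are what the paper's proof records.

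For harmonic $f$ one has $\partial^2 f/\partial x^2=-\partial^2 f/\partial y^2=2\,\mathrm{Re}(\partial^2 f/\partial z^2)$. So $|\partial^2 f/\partial z^2|\Gt\lambda$ gives $|\partial^2 f/\partial x^2|\Gt\lambda$ only when $\arg(\partial^2 f/\partial z^2)$ stays away from $\pm\pi/2$. The example above is the extreme case, and the paper's ``hence $|\partial^2 f/\partial x^2|=|\partial^2 f/\partial y^2|\Gt\lambda$'' tacitly assumes this too.

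\textbf{The repair is to choose the frame.} If $z=e^{i\phi}\zeta$ with $\zeta=u+iv$, then $\partial^2 f/\partial\zeta^2=e^{2i\phi}\,\partial^2 f/\partial z^2$. Choose $\phi$ so that $|\arg(\pm e^{2i\phi}\,\partial^2 f/\partial z^2)|\leqslant\pi/4$ for one sign; use a smooth partition of unity if this argument varies too much. In the applications it is essentially the argument of $-(t-im)/z^2$, so thin sectors suffice. This gives $|\partial^2 f/\partial u^2|=|\partial^2 f/\partial v^2|\Gt\lambda$ and $|\det\nabla^2 f|\Gt\lambda^2$. Your Steps 2--3 then run verbatim with cuts parallel to the $u$- and $v$-axes. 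Indeed $\partial^2\varww/\partial u\partial v=i(\partial^2\varww/\partial\zeta^2-\partial^2\varww/\partial\bar\zeta^2)$ and $|\partial^2\varww/\partial\zeta^2|=|\partial^2\varww/\partial z^2|$, so $V$ is unchanged.

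Also drop the claim of uniformity over arbitrary algebraic subregions. The phase $f=\lambda(x^2-y^2)$ has $|\partial^2 f/\partial x^2|=|\partial^2 f/\partial y^2|=2\lambda$. Yet over the square with vertices $0$, $(1\pm i)/2$, $1$, whose sides are parallel to the null directions $y=\pm x$, its integral is again $\asymp\lambda^{-1}\log\lambda$. The argument only needs rectangles in the chosen frame, so localize $\varww$ to small squares inside $D$ first.
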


\begin{proof}
For  $f$ harmonic and real-valued, $\partial^2 f /\partial x^2 = - \partial^2 f /\partial y^2 = 2 \mathrm{Re}   \big(\partial^2 f /\partial z^2 \big)$ and the  Hessian determinant $\det \nabla^2 f =-4 \big|\partial^2f/\partial z^2\big|^2$.  Hence $\big|\partial^2 f /\partial x^2\big| =\big|   \partial^2 f /\partial y^2\big| \Gt \lambda$ and $  |\det \nabla^2 f  | \Gt \lambda^2 $, and it follows from \cite[Lemma 4]{Srinivasan-Lattice-2} that
	 \begin{align*}
	 	 \viint_D e (f(z) ) \nd \mathrm{A} \Lt \frac{1}{\lambda}. 
	 \end{align*}
 The key point is that the implied constant is independent of $D$ so that the bound applies uniformly to the sub-domains
 \begin{align*}
 	D(x,y)=\{z \in D: \mathrm{Re}(z) \leqslant x,\,\mathrm{Im}(z) \leqslant y\}. 
 \end{align*}
Note that Hypothesis \ref{4hyp: domain} still holds for $D (x, y)$. Therefore 
\begin{align*}
	 \viint_{D(x,y)} e(f(z)) \nd \mathrm{A} \Lt \frac 1 {\lambda} . 
\end{align*}
Let $E(x, y) $ denote the integral on the left. By partial integration twice,  
\begin{align*}
	\viint \! e\big(f(z)\big)\varww(z) \nd \mathrm{A} & =  2 \! \viint \! E(x, y) \frac{\partial^2 \varww (x, y)}{\partial x \partial y} \nd x \nd y   \Lt \frac{1}{\lambda} \! \viint  \bigg|\frac{\partial^2 \varww(z)}{\partial z^2} - \frac{\partial^2 \varww(z)}{\partial \bar{z}^2} \bigg|\nd \mathrm{A} \Lt  \frac {V} {\lambda} . 
\end{align*}
\end{proof}

\begin{lem}\label{4lem: SDT-4 dimension} 
Let $D \subset \BC $  and $f \in C^{\infty} (D^2) $ satisfy Hypotheses \ref{4hyp: domain} and \ref{4hyp: phase}. 	Assume that $f$ is of the form
	\begin{equation*}
		f(z_1, z_2) = f_1(z_1) + f_2(z_2) + \delta(z_1, z_2),
	\end{equation*}
	for $f_1 (z), f_2 (z)$, and $\delta (z_1, z_2)$ harmonic and real-valued. Assume further that there is $ \lambda > 1$ such that 
	\begin{equation*}
		\left| \frac {\partial^2 f_1 (z)}   {\partial z^2}\right|,  \,   \left| \frac {\partial^2 f_2 (z)} {\partial z^2}\right|  \Gt \lambda , \qquad 
	\frac{\partial^{j_1+k_1+j_2+k_2}\delta(z_1,z_2)}
	{\partial z_1^{j_1}\partial\bar z_1^{k_1}
		\partial z_2^{j_2}\partial\bar z_2^{k_2}}
	\Lt 
	\lambda^{1-\vepsilon}, 
\end{equation*}
for $j_1+k_1+j_2+k_2 =2$. Let $\varww_1, \varww_2 \in C_c^{\infty} (D)$. Define their variations $V_1$ and $V_2 $ as in Lemma \ref{4lem: SDT-2 dimension}. Then 
\begin{equation*}
	\viint_{D} \! \viint_D
	e (f(z_1,z_2) )\varww_1(z_1) \varww_2 (z_2)
	\nd\mathrm{A}(z_1)\,\nd\mathrm{A}(z_2)
	\Lt\frac{V_1 V_2} {\lambda^2}.
\end{equation*}
	\end{lem}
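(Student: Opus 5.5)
We reduce Lemma \ref{4lem: SDT-4 dimension} to a four-dimensional version of Srinivasan's second derivative test \cite[Lemma 4]{Srinivasan-Lattice-2}, following the same pattern as in the proof of Lemma \ref{4lem: SDT-2 dimension}. The argument has three steps: compute the Hessian of $f$ as a real function on $\BR^4$, bound the integral without weights uniformly over sub-domains, and then insert the weights by partial integration.

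\textbf{Step 1: the Hessian.} Write $z_1 = x_1 + i y_1$ and $z_2 = x_2 + i y_2$, and view $f$ as a real function of $(x_1,y_1,x_2,y_2)$. For the diagonal terms we use the computation in the proof of Lemma \ref{4lem: SDT-2 dimension}. Since $f_\ell$ is harmonic and real-valued, the $2\times 2$ Hessian of $f_\ell$ is
\[
\begin{pmatrix} 2a_\ell & -2b_\ell \\ -2b_\ell & -2a_\ell \end{pmatrix},
\qquad a_\ell + i b_\ell = \partial^2 f_\ell/\partial z_\ell^2 .
\]
This matrix is $2|\partial^2 f_\ell/\partial z_\ell^2|$ times a reflection matrix. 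Its eigenvalues are therefore $\pm 2|\partial^2 f_\ell/\partial z_\ell^2|$, both of absolute value $\Gt \lambda$.

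It follows that the block-diagonal matrix $H_0 = \mathrm{diag}(\nabla^2 f_1, \nabla^2 f_2)$ has all four eigenvalues of absolute value $\Gt \lambda$. Moreover, after scaling each block it is orthogonal up to scalars, so $\|H_0^{-1}\| \Lt 1/\lambda$.

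The perturbation $\nabla^2 \delta$ has all real second partials $\Lt \lambda^{1-\vepsilon}$, since these are linear combinations of the Wirtinger second derivatives. Hence
\[
\nabla^2 f = H_0\bigl(I + H_0^{-1}\nabla^2 \delta\bigr), \qquad \|H_0^{-1}\nabla^2\delta\| \Lt \lambda^{-\vepsilon} .
\]
For $\lambda$ large this is $<1/2$, and the bounded-$\lambda$ case is trivial.

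Consequently every eigenvalue of the symmetric matrix $\nabla^2 f$ has absolute value $\Gt \lambda$, by Weyl's perturbation inequality applied to the symmetric matrices $H_0$ and $\nabla^2\delta$. In particular $|\det \nabla^2 f| \Gt \lambda^4$, and all principal-minor-type quantities required in Srinivasan's lemma are bounded below accordingly. I expect the main obstacle to be matching precisely the hypotheses of \cite[Lemma 4]{Srinivasan-Lattice-2} in dimension four, which involve eigenvalue lower bounds of the Hessian, and checking that they hold uniformly. The eigenvalue bound above is what I would verify first.

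\textbf{Step 2: uniform bound without weights.} Srinivasan's lemma then gives
\[
\viint_{D'}\!\viint_{D''} e(f)\, \nd\mathrm{A}\, \nd\mathrm{A} \Lt \lambda^{-2}
\]
for any sub-domains satisfying Hypothesis \ref{4hyp: domain}. The implied constant does not depend on the domain. Apply this to the boxes
\[
D(x_1,y_1) \times D(x_2,y_2), \qquad D(x,y) = \{ z \in D : \mathrm{Re}(z) \leqslant x,\ \mathrm{Im}(z) \leqslant y \},
\]
which still satisfy the hypothesis. Denote the resulting integral by $E(x_1,y_1,x_2,y_2)$; then $E \Lt \lambda^{-2}$ uniformly.

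\textbf{Step 3: inserting the weights.} Integrate by parts once in each of the four variables. This gives
\[
\iiiint e(f)\,\varww_1\varww_2 = 4\iiiint E\,
\frac{\partial^2\varww_1}{\partial x_1\partial y_1}\,
\frac{\partial^2\varww_2}{\partial x_2\partial y_2}\,
\nd x_1\,\nd y_1\,\nd x_2\,\nd y_2 ,
\]
where the constant $4$ accounts for the normalisation $\nd\mathrm{A} = 2\,\nd x\,\nd y$ in each variable. Now use the identity $\partial_x\partial_y = i(\partial_z^2 - \partial_{\bar z}^2)$ in each variable, together with $E \Lt \lambda^{-2}$. This bounds the integral by $\Lt V_1 V_2/\lambda^2$, as required.
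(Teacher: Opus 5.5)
\textbf{Gap.} The problem is the inference at the end of your Step 1. The hypotheses of Srinivasan's lemma concern the leading principal minors $\Delta_1,\dots,\Delta_4$ of $\nabla^2 f$ in the coordinates $(x_1,y_1,x_2,y_2)$ along which you slice, not its eigenvalues. This is what the paper means by ``the principal minor determinants of the Hessian matrix are easy to estimate,'' and eigenvalue lower bounds do not control these minors.

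Write $c_\ell=\partial^2 f_\ell/\partial z_\ell^2$. The block structure gives $\Delta_2=-4|c_1|^2(1+O(\lambda^{-\vepsilon}))$ and $\Delta_4=16|c_1c_2|^2(1+O(\lambda^{-\vepsilon}))$, and these are fine. But $\Delta_1=2\,\mathrm{Re}\,c_1+O(\lambda^{1-\vepsilon})$, and by a Schur complement $\Delta_3=\Delta_2\,(2\,\mathrm{Re}\,c_2+O(\lambda^{1-\vepsilon}))$. Since $\mathrm{Re}\,c_\ell$ may vanish, these need not be bounded below.

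This is not a technicality: it makes Step 2 false. Take $D=(0,1)^2$, $f=\lambda x_1y_1+\lambda x_2y_2$ and $\delta=0$. All hypotheses of the lemma hold: $c_\ell=-i\lambda/2$, and the Hessian eigenvalues are $\pm\lambda$. Yet
\[
\iint_{(0,1)^2} e(\lambda xy)\,\nd x\,\nd y=\frac{1}{2\pi i\lambda}\int_0^{\lambda}\frac{e(u)-1}{u}\,\nd u\asymp\frac{\log\lambda}{\lambda},
\]
so $E(1,1,1,1)\asymp(\log\lambda)^2/\lambda^2$. No uniform bound $E\Lt\lambda^{-2}$ over axis-parallel boxes is possible here, because the box edges lie along null directions of the Hessian. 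Citing a different version of the second derivative test cannot rescue Step 2 with these boxes.

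\textbf{Fix.} What is missing is a rotation in each variable. Put $z_\ell=e^{i\phi_\ell}w_\ell$ with $w_\ell=u_\ell+iv_\ell$; then $\partial^2 f_\ell/\partial w_\ell^2=e^{2i\phi_\ell}c_\ell$. If $\arg c_\ell$ stays in a short arc, you may choose $\phi_\ell$ with $\mathrm{Re}(e^{2i\phi_\ell}c_\ell)\Gt\lambda$. This can be arranged by localizing; in the applications $c_\ell\approx-(t-im)/(4\pi z_\ell^2)$, so narrow sectors suffice.

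In the coordinates $(u_1,v_1,u_2,v_2)$ the block computation above then gives $|\Delta_r|\Gt\lambda^r$ for $r=1,\dots,4$. Step 2 should be applied to boxes in these rotated coordinates. Step 3 still works, because $\partial_u\partial_v=i(e^{2i\phi}\partial_z^2-e^{-2i\phi}\partial_{\bar z}^2)$ is dominated pointwise by $|\partial_z^2\varww|+|\partial_{\bar z}^2\varww|$, so the variations $V_\ell$ are unchanged.

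With this modification, your Steps 2--3 are exactly the paper's omitted argument, modelled on Lemma~\ref{4lem: SDT-2 dimension}. Note that the paper's own assertion $|\partial^2 f/\partial x^2|\Gt\lambda$ in that proof tacitly needs the same real-part condition, so the rotation is needed there too.
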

 
	 Let us omit the proof as it is similar to that of Lemma \ref{4lem: SDT-2 dimension}. Given the form of $f (z_1, z_2)$ as above, the principal minor determinants of the Hessian matrix are easy to estimate. Note that $f  $  is referred to as an `almost separable' phase in \cite{HMQ-Beyond-Weyl}.   


	\subsection{Analysis of Oscillatory Integrals I}


First, we   consider 
\begin{equation*}
	\text{\large $\EuScript{I}$}(\valpha)
	=
	\viint V(|z|)e(f(z;\valpha))\nd\mathrm{A} ,
\end{equation*}
where  $V\in C_c^\infty(\BR_+)$ is fixed, and  $f(z;\valpha)$ is of the form
\begin{equation*}
	f(z; \valpha)
	=
	2 t\rho(z)+2m\theta(z)+\Tr (\valpha z^2 +\delta(z) ), \qquad \rho(z) = \frac{\log|z|}{2\pi},\quad \theta(z) = \frac{\arg(z)}{2\pi} .
\end{equation*}

\begin{lem}\label{4lem: pre-analysis of I}
	On  the annulus  support of $V (|z|)$, assume that $\delta  $ is holomorphic and
	\begin{equation*}
		{\partial^j\delta} / {\partial z^j}
		\Lt_j\RC(t,m)^{1-\vepsilon}. 
	\end{equation*}
	Then	the integral  $\text{\large $\EuScript{I}$}(\valpha)$ is negligibly small unless
	$|\valpha|\Lt\RC(t,m)$, in which case  \begin{align*}
		\text{\large $\EuScript{I}$}(\valpha)\Lt \frac 1{ \RC(t,m)} . 
	\end{align*} 
\end{lem}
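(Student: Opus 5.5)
The plan is to compute the Wirtinger derivatives of the phase explicitly. Then we split the annulus into a region where $|\partial f/\partial z|$ is large, handled by Lemma \ref{4lem: Stationary Phase}, and a region where $|\partial^2 f/\partial z^2|\gg \RC(t,m)$, handled by the second derivative test in Lemma \ref{4lem: SDT-2 dimension}.

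\emph{Derivatives.} Write $\log|z|=\frac12(\log z+\log\bar z)$ and $\arg z=\frac1{2i}(\log z-\log\bar z)$. Then
\begin{align*}
	2t\rho(z)+2m\theta(z)=\frac{t-im}{2\pi}\log z+\frac{t+im}{2\pi}\log\bar z .
\end{align*}
Also $\Tr(\valpha z^2+\delta(z))=\valpha z^2+\delta(z)+\overline{\valpha z^2+\delta(z)}$. Hence $f$ is a sum of holomorphic and antiholomorphic parts, so $f$ is harmonic: all mixed derivatives $\partial^{i+j}f/\partial z^i\partial\bar z^j$ with $i,j\geqslant1$ vanish. Moreover
\begin{align*}
	\frac{\partial f}{\partial z}=\frac{t-im}{2\pi z}+2\valpha z+\delta'(z),\qquad
	\frac{\partial^2 f}{\partial z^2}=-\frac{t-im}{2\pi z^2}+2\valpha+\delta''(z),
\end{align*}
and $\partial^j f/\partial z^j\Lt_j \RC(t,m)+|\valpha|$ for $j\geqslant 2$. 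Note that $|t-im|\asymp\RC(t,m)$ when $\RC(t,m)$ is large, and $|z|\asymp1$ on the support of $V(|z|)$.

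\emph{Large $\valpha$.} Suppose $|\valpha|\geqslant K\RC(t,m)$ with $K$ a large constant. Then $|\partial f/\partial z|\gg|\valpha|$ on the annulus. Apply Lemma \ref{4lem: Stationary Phase} with
\begin{align*}
	R=|\valpha|,\qquad Z=|\valpha|,\qquad P=\varUpsilon=Q=\varPhi=S=1 .
\end{align*}
The bracket becomes $\Lt 1/|\valpha|$, so $\text{\large $\EuScript{I}$}(\valpha)\Lt_A|\valpha|^{-A}\leqslant\RC(t,m)^{-A}$, which is negligible.

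\emph{Small $\valpha$, first region.} Now let $|\valpha|\Lt\RC(t,m)$ and write $\RC=\RC(t,m)$. Put
\begin{align*}
	g(z)=2\valpha z^2+\frac{t-im}{2\pi},
\end{align*}
so that $\partial f/\partial z=g(z)/z+O(\RC^{1-\vepsilon})$ and $\partial^2 f/\partial z^2=\big(g(z)-\frac{t-im}{\pi}\big)/z^2+O(\RC^{1-\vepsilon})$. Introduce a smooth partition of unity $1=\varww_0+\varww_1$ on the annulus. Here $\varww_1$ is supported where $|g(z)|\geqslant\RC^{2/3+\vepsilon}$ and $\varww_0$ where $|g(z)|\leqslant 2\RC^{2/3+\vepsilon}$. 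Since $g'(z)=4\valpha z$ has size $\Lt\RC$, the cutoff can be taken to vary on scale $\RC^{-1/3+\vepsilon}$, with derivatives of order $k$ bounded by $\RC^{k(1/3-\vepsilon)}$. On $\mathrm{supp}\,\varww_1$, apply Lemma \ref{4lem: Stationary Phase} with
\begin{align*}
	R=\RC^{2/3+\vepsilon},\qquad Z=\RC,\qquad Q=\varPhi=1,\qquad P=\varUpsilon=\RC^{-1/3+\vepsilon} .
\end{align*}
Then $1/(RP)\Lt\RC^{-2\vepsilon}$ and $Z^2/R^3\Lt\RC^{-3\vepsilon}$, so that piece is negligible.

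\emph{Small $\valpha$, second region.} On $\mathrm{supp}\,\varww_0$ we have $|g(z)|\Lt\RC^{2/3+\vepsilon}$, hence
\begin{align*}
	\Big|\frac{\partial^2 f}{\partial z^2}\Big|=\frac{|t-im|}{\pi|z|^2}+O(\RC^{1-\vepsilon})\gg\RC .
\end{align*}
Apply Lemma \ref{4lem: SDT-2 dimension} with $\lambda\asymp\RC$ and weight $V(|z|)\varww_0(z)$. Its support has area $\Lt\RC^{-2/3+2\vepsilon}$, since $g$ has derivative of size $\gg|\valpha|$ there, and the subcase $|\valpha|$ tiny is trivial because then $|g|\asymp\RC$ and $\varww_0$ vanishes. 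Its second derivatives are $\Lt\RC^{2/3-2\vepsilon}$, so its variation is $\Lt 1$. This gives $\Lt\RC^{-1}$. If instead we avoid the partition and simply apply Lemma \ref{4lem: SDT-2 dimension} on the whole annulus, the bound $V\Lt1$ still holds, but only where $|\partial^2 f/\partial z^2|\gg\RC$ can be guaranteed. That is exactly why the split is made.

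\emph{Main obstacle.} The main technical point is bookkeeping the smooth cutoff $\varww_0$. Its variation must be $O(1)$, while the complementary region must still have $|\partial f/\partial z|$ large enough to beat $Z^2/R^3$. The threshold $\RC^{2/3+\vepsilon}$ is chosen to balance these two requirements. A second point is checking that Hypotheses \ref{4hyp: domain} and \ref{4hyp: phase} hold for the sub-domains. For this I would assume, as in the intended applications, that $\delta$ is algebraic. If this causes trouble, I would fall back on a direct two-dimensional stationary phase expansion at the unique nondegenerate critical point $z_0^2=-(t-im)/(4\pi\valpha)$, whose Hessian determinant has size $\asymp\RC^2$; this again yields $\RC^{-1}$.
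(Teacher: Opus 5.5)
\textbf{Gap on $\mathrm{supp}\,\varww_1$.} You correctly record $\partial f/\partial z = g(z)/z + \delta'(z)$ with $\delta'(z) = O(\RC^{1-\vepsilon})$. You then assert $|\partial f/\partial z| \Gt \RC^{2/3+\vepsilon}$ wherever $|g(z)| \geqslant \RC^{2/3+\vepsilon}$, and this does not follow. Since $\RC^{1-\vepsilon}$ is far larger than $\RC^{2/3+\vepsilon}$, the term $\delta'$ can cancel $g/z$. The true critical point of $f$ solves $g(z) = -z\delta'(z)$, so it can have $|g| \asymp \RC^{1-\vepsilon}$. It then lies inside $\mathrm{supp}\,\varww_1$, exactly where you invoke Lemma \ref{4lem: Stationary Phase}. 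So that piece is not shown to be negligible, and the lemma is not proved in its stated generality. This generality is needed: in Lemma \ref{7lem: bound for J(w)} one has $|\delta'| \asymp \sqrt{NX}/P = K$, which is only assumed to be $< \RC^{1-\vepsilon}$. The same oversight puts your fallback critical point at $z_0$ rather than at its $\delta'$-shifted position.

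\textbf{The threshold and the repair.} Your reason for the threshold is also mistaken. For a threshold $T$, the set $\{|g|\leqslant 2T\}$ has area $\asymp (T/\RC)^2$, while the cutoff has second derivatives $\asymp (\RC/T)^2$. So the variation is $O(1)$ for every $T$, and nothing forces $T$ down to $\RC^{2/3+\vepsilon}$. Take $T = c\,\RC$ with a small fixed constant $c$. Then $|\partial f/\partial z| \Gt \RC$ on $\mathrm{supp}\,\varww_1$ despite $\delta'$. On the algebraic domain $D = \{|g| < 3T\} \cap \{\text{annulus}\}$ you still have $|\partial^2 f/\partial z^2| \geqslant |t-im|/(\pi|z|^2) - 3T/|z|^2 - O(\RC^{1-\vepsilon}) \Gt \RC$. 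You must specify such a $D$: Lemma \ref{4lem: SDT-2 dimension} needs the lower bound on all of $D$, since it is applied to the subdomains $D(x,y)$, not just on the support of the weight.

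\textbf{Comparison with the paper.} With this fix your route is valid, but the paper's is different and simpler. The paper restricts to a sector and substitutes $z\mapsto\sqrt z$, which turns $\Tr(\valpha z^2)$ into the linear term $\Tr(\valpha z)$. The new phase then has $\partial^2 g/\partial z^2 = -(t-im)/(4\pi z^2) + O(\RC^{1-\vepsilon})$, which is $\Gt \RC$ on the whole sector uniformly in $\valpha$. Lemma \ref{4lem: SDT-2 dimension} then applies in one step with no partition of unity. Large $\valpha$ is handled by Lemma \ref{4lem: Stationary Phase} exactly as you do. Your approach shows that, in the original variable, the Hessian degenerates only where $|\partial f/\partial z| \asymp \RC$. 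The cost is cutoff and domain bookkeeping that the change of variables makes unnecessary.
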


\begin{proof}
	By a suitable smooth radial partition of unity, we may restrict the integration to a sector on which a branch of $ z \ra \sqrt{z} $ is a biholomorphism. After the change of variable  $ z \ra \sqrt{z} $, on the sector of support, the weight function is still of bounded derivatives, while  the phase function reads 
	\begin{align*}
		g (z; \valpha) = t\rho(z)+ m \theta(z)+\Tr (\valpha z  +\delta(\sqrt{z}) ),
	\end{align*}
It is clear that $g (z; \valpha)$ is real-valued, harmonic, and 
	\begin{equation*}
		\frac {\partial g(z;\valpha)} {\partial z} =\frac{t-im}{4\pi z}+\valpha+O \big(\RC(t,m)^{1-\vepsilon} \big) , \quad 
	\frac {\partial^2 g(z;\valpha)} {\partial z^2} =-\frac{t-im}{4\pi z^2}+O \big(  \RC(t,m)^{1-\vepsilon} \big).
	\end{equation*}
For $|\valpha|\Gt\RC(t,m)$, we have $ |\partial g(z;\valpha)/\partial z |\Gt |\valpha|$, and hence $\text{\large $\EuScript{I}$}(\valpha)$ is negligibly small by Lemma \ref{4lem: Stationary Phase}. Since $  \big|\partial^2 g(z;\valpha)/\partial z^2 \big|\Gt \RC(t,m) $, the bound $ \text{\large $\EuScript{I}$}(\valpha) = O    (1 / { \RC(t,m)} ) $ is a direct consequence of Lemma \ref{4lem: SDT-2 dimension}. 
\end{proof}

Next,  we consider 
	\begin{equation*}
	\text{\large $\EuScript{I}$} (\valpha_1,\valpha_2) = \viint \! \viint V_1(|z_1|)V_2(|z_2|) 
	e(f (z_1,z_2;\valpha_1,\valpha_2))\nd \mathrm{A}(z_1) \, \nd \mathrm{A}(z_2),
\end{equation*}
where  $V_1, V_2 \in C_c^\infty(\BR_+)$  and the phase function is of the form
\begin{equation*}
	f (z_1,z_2;\valpha_1,\valpha_2) = 2 t(\rho(z_1)-   \rho(z_2))+ 2m(\theta(z_1)-\theta(z_2))+ \Tr\big(\valpha_1z_1^2-\valpha_2z_2^2 \big) + \Tr\, \delta(z_1,z_2) . 
\end{equation*}

\begin{lem}
	\label{4lem: analysis of I, 2} 
	On  the annulus  support of $V_1 (|z_1|) V_2 (|z_2|)$, assume that 
	$\delta $ is holomorphic and that
	\begin{equation*}
		\frac{\partial^{j_1+j_2}\delta(z_1,z_2)}
		{\partial z_1^{j_1}	\partial z_2^{j_2}}
		\Lt_{j_1,j_2} 
		\RC(t,m)^{1-\vepsilon} .
	\end{equation*} Then $\text{\large $\EuScript{I}$} (\valpha_1,\valpha_2)$ is negligibly small unless $|\valpha_1|,|\valpha_2|\Lt\RC(t,m)$, in which case $$\text{\large $\EuScript{I}$} (\valpha_1,\valpha_2)\Lt \frac 1{ \RC(t,m)^2} . $$
\end{lem}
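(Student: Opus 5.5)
We mirror the proof of Lemma \ref{4lem: pre-analysis of I}, now in two variables, and use Lemma \ref{4lem: SDT-4 dimension} in place of Lemma \ref{4lem: SDT-2 dimension}.

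\emph{Reduction to the square-root variables.} Apply a smooth radial partition of unity in each variable. This restricts $z_1$ and $z_2$ to sectors on which a branch of $z\ra\sqrt{z}$ is a biholomorphism. Then change variables $z_1\ra\sqrt{z_1}$ and $z_2\ra\sqrt{z_2}$. The weights $\varww_1,\varww_2$ keep bounded derivatives and bounded variations $V_1,V_2$. The phase becomes
\begin{equation*}
	g(z_1,z_2) = g_1(z_1) + g_2(z_2) + \Tr\,\delta(\sqrt{z_1},\sqrt{z_2}),
\end{equation*}
with
\begin{equation*}
	g_1(z) = t\rho(z)+m\theta(z)+\Tr(\valpha_1 z),\qquad g_2(z) = -t\rho(z)-m\theta(z)-\Tr(\valpha_2 z).
\end{equation*}
Each piece is real-valued and harmonic; for the $\delta$-term this holds because it is the real part of a holomorphic function. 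Hypotheses \ref{4hyp: domain} and \ref{4hyp: phase} hold since all quantities are algebraic in $z_j,\bar z_j$ apart from $\log$ and $\arg$, whose derivatives are rational. At this point we may need to assume, as the paper implicitly does, that $\delta$ has algebraic derivatives.

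\emph{Derivative computations.} We have
\begin{equation*}
	\frac{\partial g}{\partial z_1}=\frac{t-im}{4\pi z_1}+\valpha_1+O\big(\RC(t,m)^{1-\vepsilon}\big),\qquad
	\frac{\partial g}{\partial z_2}=-\frac{t-im}{4\pi z_2}-\valpha_2+O\big(\RC(t,m)^{1-\vepsilon}\big).
\end{equation*}
Moreover $\big|\partial^2 g_j/\partial z_j^2\big|\asymp \RC(t,m)$ on the support. All second derivatives of the $\delta$-term are $O(\RC(t,m)^{1-\vepsilon})$, by the chain rule and the assumed bounds. Higher derivatives of the full phase are $O(\RC(t,m)+|\valpha_j|)$.

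\emph{Case $|\valpha_1|\Gt\RC(t,m)$ (with a large implied constant).} Here $|\partial g/\partial z_1|\Gt|\valpha_1|$ uniformly in $z_2$. Fix $z_2$ and apply Lemma \ref{4lem: Stationary Phase} to the $z_1$-integral with $P=\varUpsilon=Q=\varPhi\asymp1$, $R=|\valpha_1|$ and $Z\Lt|\valpha_1|$. The bracket is then $\Lt 1/|\valpha_1|$, so the integral is $O(|\valpha_1|^{-A})$, which is negligible. The same argument in $z_2$ handles $|\valpha_2|\Gt\RC(t,m)$.

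\emph{Case $|\valpha_1|,|\valpha_2|\Lt\RC(t,m)$.} Lemma \ref{4lem: SDT-4 dimension} applies with $f_j=g_j$, $\lambda\asymp\RC(t,m)$, and $\delta$ replaced by $\Tr\,\delta(\sqrt{z_1},\sqrt{z_2})$. This gives $\text{\large $\EuScript{I}$}(\valpha_1,\valpha_2)\Lt V_1V_2/\RC(t,m)^2\Lt\RC(t,m)^{-2}$. Summing over the boundedly many sectors from the partition completes the proof.

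\emph{Main obstacle.} The delicate point is checking the ``almost separable'' hypothesis. The mixed and pure second derivatives of the perturbation must be $\Lt\lambda^{1-\vepsilon}$ while $\lambda\asymp\RC(t,m)$, and the lower bound on $\partial^2 g_j/\partial z_j^2$ must be uniform. In particular we need $|t-im|/(4\pi|z|^2)\asymp\RC(t,m)$, which requires $\RC(t,m)$ large; bounded $\RC(t,m)$ is handled by the trivial bound. Once these hold, the Hessian of $g$ is a small perturbation of a block-diagonal one, so the principal minors are controlled as Lemma \ref{4lem: SDT-4 dimension} requires.
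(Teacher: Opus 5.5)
Your proposal is correct and follows the same route as the paper. The paper's proof is just the remark that the argument parallels Lemma \ref{4lem: pre-analysis of I}, with Lemma \ref{4lem: SDT-4 dimension} replacing Lemma \ref{4lem: SDT-2 dimension}; you have filled in the square-root change of variables, the stationary-phase step for large $|\valpha_j|$, and the almost-separable second derivative test. Your caveat that $\delta$ needs algebraic derivatives for Hypothesis \ref{4hyp: phase} is a fair point the paper leaves implicit, and it holds in the application in Lemma \ref{8lem: integral}, where $\delta$ is a polynomial.
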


\begin{proof}
	The proof is parallel to that of Lemma \ref{4lem: pre-analysis of I}: one just needs to apply Lemma \ref{4lem: SDT-4 dimension}  in place of Lemma \ref{4lem: SDT-2 dimension}. 
\end{proof}

	\subsection{The Weil Identity} 
	
	In the case that the phase function is quadratic, we may use the Weil--Plancherel identity as a substitute for stationary phase
	over $\BC$: 
	\begin{equation}\label{4eq: Weil}
		\viint_{\BC}\varww(z)e\left[\frac{uz^2}{2}\right]\nd\mathrm{A} 
		=
		\frac{1}{|u|}
		\viint_{\BC}\hat{\varww}(z)
		e\left[-\frac{z^2}{2u}\right]\nd\mathrm{A} ,
		\qquad u\in\BC^\times ; 
	\end{equation} see \cite{Weil-Unitary} and
	\cite[(26)]{Jacquet-RTF}. The advantage of this identity is that it would enable us to derive easily estimates for the derivatives of the integral.

	\begin{lem}\label{4lem: derivative bound}
	Let $H \subset \BC$ denote the complement of a fixed disc centered at the origin and  $D\subset\BC$ be a fixed bounded domain. Let  
		$\varww(z; v )\in C^\infty(D \times H )$. Assume that   $\varww(z; v )$ is supported in a fixed compact subset of $D$ for all $v \in H$ and that
		\begin{equation*}
			 v ^{i+j}   \frac{\partial^{i+j+k+ l  }\varww(z; v )}
			{\partial z^k\partial\bar z^l  \partial v ^i\partial\bar v ^ j   }
			\Lt_{i,j,k, l  } S, 
		\end{equation*}
		for any $i,j,k, l  \geqslant0$. Define
		\begin{equation*}
			I( v )
			=
			\viint_D\varww(z; v )e [ v  z^2  ]
			\nd\mathrm{A}.
		\end{equation*}
		Then, for every $i, j  \geqslant0$,
		\begin{equation*}
		v^{i+j}	\frac{\partial^{i+ j }I( v )}
			{\partial v ^i \partial\bar v ^ j  }
			\Lt_{i, j }
			\frac{S}{| v | }.
		\end{equation*}
	\end{lem}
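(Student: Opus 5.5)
The plan is to use the Weil identity \eqref{4eq: Weil} to move the $v$-dependence of the phase into a form that is harmless to differentiate. Apply \eqref{4eq: Weil} with $u = 2v$ to the function $z \mapsto \varww(z;v)$, which is smooth and supported in a fixed compact subset of $D$. This gives
\begin{equation*}
	I(v) = \frac{1}{2|v|} \viint_{\BC} \hat{\varww}(\xi; v)\, e\Big[-\frac{\xi^2}{4v}\Big] \nd \mathrm{A}(\xi),
\end{equation*}
where $\hat{\varww}(\xi;v)$ is the Fourier transform \eqref{5eq: Fourier} in the variable $z$. The point is that on $H$ we have $|v| \Gt 1$. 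Hence the new phase $\Tr(-\xi^2/4v)$ has small $v$-derivatives relative to powers of $\xi$, and the prefactor $1/|v|$ already supplies the claimed size.

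Next we need bounds for the Fourier transform. Since $\varww(\cdot\,;v)$ is supported in a fixed compact set, integrating by parts in $z$ (using the $z$- and $\bar z$-derivative bounds) gives
\begin{equation*}
	v^{i+j}\frac{\partial^{i+j}\hat{\varww}(\xi;v)}{\partial v^i\partial\bar v^j} \Lt_{i,j,A} S\,(1+|\xi|)^{-A}
\end{equation*}
for all $i,j$ and every $A \geqslant 0$. Here differentiation in $v$ commutes with the $z$-integral because the domain is fixed and the integrand is smooth.

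Now differentiate the displayed formula for $I(v)$ under the integral sign, which is justified by the rapid decay in $\xi$, and expand by the Leibniz rule. There are three kinds of factors.
\begin{itemize}
	\item A $v$- or $\bar v$-derivative of $|v|^{-1} = (v\bar v)^{-1/2}$ produces $v^{-1}$ or $\bar v^{-1}$ times a constant.
	\item A derivative of $\hat{\varww}$ is controlled by the bound above.
	\item A derivative of $e[-\xi^2/4v] = e(-\xi^2/4v - \bar\xi^2/4\bar v)$ produces factors $2\pi i\,\xi^2/(4v^2)$ or their conjugates, and repeated derivatives produce polynomials in $\xi^2/v$ and $\bar\xi^2/\bar v$ divided by further powers of $v$ or $\bar v$.
\end{itemize}
After multiplying by $v^{i+j}$, each term is therefore bounded by
\begin{equation*}
	\frac{S}{|v|}\,(1+|\xi|)^{2(i+j)}\,(1+|\xi|)^{-A}.
\end{equation*}
This uses $|v|\Gt 1$ and the fact that $|v^{i+j}| = |v|^i |\bar v|^j$, so mixed powers of $v$ and $\bar v$ cause no trouble. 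Taking $A = 2(i+j)+3$ makes the $\xi$-integral converge, and we obtain $v^{i+j}\partial^{i+j}I/\partial v^i\partial\bar v^j \Lt_{i,j} S/|v|$.

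The step requiring the most care is the bookkeeping in the Leibniz expansion: each derivative falling on the phase costs $|\xi|^2/|v|^2$, and one must confirm that this is offset by one power of $v$ from $v^{i+j}$ and by one power of $|v|^{-1}$ to spare. The cleanest way to organize this is an induction on $i+j$ using the operators $v\partial_v$ and $\bar v\partial_{\bar v}$. These map the class of functions $|v|^{-1}\Phi(\xi;v)$, with $\Phi$ a Schwartz-type function in $\xi$ uniformly in $v$ (all of its $v\partial_v$, $\bar v\partial_{\bar v}$-derivatives obey the same bounds), times $e[-\xi^2/4v]$, into itself. The only growth they introduce is polynomial in $\xi$, which is absorbed by the decay of $\hat{\varww}$.
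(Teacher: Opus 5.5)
Your proposal is correct and follows the paper's proof essentially step for step. Both arguments apply the Weil identity \eqref{4eq: Weil} with $u=2v$, bound the weighted $v$-derivatives of $\hat{\varww}$ by $S(1+|\xi|)^{-A}$, bound those of $e[-\xi^2/4v]$ by $(1+|\xi|)^{2(i+j)}$ for $|v|\Gt 1$, and conclude with the product rule under the integral. Your explicit handling of the prefactor $|v|^{-1}$ and of the relation between $v^i\partial_v^i$ and powers of $v\partial_v$ only fills in details the paper leaves implicit.
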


\begin{proof}
	It follows from \eqref{4eq: Weil} that 
	\begin{equation*}
		I( v )
		=
		\frac{1}{ |2  v |}
		\viint_{\BC}
		\hat{\varww}(z; v )
		e\left[-\frac{z^2}{4 v }\right]
		\nd\mathrm{A} . 
	\end{equation*}  
For every $A \geqslant 0$, we have 
\begin{equation*}
	v ^{i+j}  \frac{\partial^{i+ j }\hat{\varww}(z; v )}
	{\partial v ^i\partial\bar v ^ j }
	\Lt_{i, j ,A} S(1+|z|)^{-A},
\end{equation*}
while, for any $|v| \Gt 1$, we have 
\begin{equation*}
	v ^{i+j} 
	\frac{\partial^{i+ j } e [- {z^2} / {4 v } ]}
	{\partial v ^i\partial\bar v ^ j } 	\Lt_{i, j }(1+|z|)^{2i+2 j }.
\end{equation*}
Thus the proof is completed if we differentiate under the integral and apply the bounds above after the product rule.   
\end{proof}

	\subsection{Analysis of Oscillatory Integrals II} 
	
	\begin{lem}
		\label{4lem: Fourier integral}
	Let $H \subset \BC$ be as in Lemma \ref{4lem: derivative bound}. 	Let $V\in C_c^\infty(\BR_+)$ be fixed, and define
		\begin{equation*}
			\text{\large $\EuScript{K}$}(v, w)
			= \viint_{\BC}V(|z|)e[vz^2 - 2 w z]\nd\mathrm{A} . 
		\end{equation*} 
	
	{\rm(1)} For $v \in H$, we have
	$ \text{\large $\EuScript{K}$}(v,w) \Lt_{A} (1+|v|+|w|)^{-A}$ for any $A \geqslant 0$ unless $|v|\asymp  |w|$, in which case, if we set
	\begin{equation*}
		u=\frac{w^2}{ v},
		\qquad
		\text{\large $\EuScript{W}$}(u; v )=e[u] \, \text{\large $\EuScript{K}$}(v, w), 
	\end{equation*}
	then, for every $i, j  \geqslant0$, 
	\begin{equation*}
		u^{i+j} 
		\frac{\partial^{i+ j} \text{\large $\EuScript{W}$}(u; v )}
		{\partial u^i\partial\bar u^ j }
		\Lt_{i, j }\frac{1}{|v|}.
	\end{equation*}

{\rm(2)} For  $v=0$, we have $\text{\large $\EuScript{K}$}(0, w)= \text{\large $\EuScript{W}$}_0(w)$ for some radial Schwartz function $\text{\large $\EuScript{W}$}_0$. 
	\end{lem}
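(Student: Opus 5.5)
The plan is to handle (2) directly, then for (1) dispose of the non-stationary ranges with Lemma \ref{4lem: Stationary Phase}, and treat the range $|v|\asymp|w|$ by completing the square and invoking Lemma \ref{4lem: derivative bound}.

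\textbf{Part (2).} For $v=0$ we have $\text{\large $\EuScript{K}$}(0,w)=\viint V(|z|)e[-2wz]\nd\mathrm{A}$. This is the Fourier transform \eqref{5eq: Fourier} of the radial compactly supported function $V(|z|)$, evaluated at $-2w$. It is therefore Schwartz, and it is radial because $V(|z|)$ is invariant under rotations $z\mapsto e^{i\phi}z$. So (2) is immediate.

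\textbf{Part (1), non-stationary ranges.} The phase is $f(z)=\Tr(vz^2-2wz)$, so
\[
\partial f/\partial z = 2vz-2w, \qquad \partial^2 f/\partial z^2=2v,
\]
and all higher derivatives vanish. On the support we have $|z|\asymp 1$.
\begin{itemize}
\item If $|w|\geqslant C|v|$ with $C$ large, or $|w|\leqslant c|v|$ with $c$ small, then $|2vz-2w|\Gt |v|+|w|$. Note that $|v|\Gt 1$ because $v\in H$.
\item In that case apply Lemma \ref{4lem: Stationary Phase} with $P=\varUpsilon=1$, $Q=\varPhi=1$, $Z=|v|$ and $R=|v|+|w|$.
\item This gives the bound $\big(1/R+|v|^2/R^3\big)^A\Lt (1+|v|+|w|)^{-A}$.
\end{itemize}
Hence $\text{\large $\EuScript{K}$}$ is negligible unless $|v|\asymp|w|$.

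\textbf{Part (1), the range $|v|\asymp|w|$.} Complete the square:
\[
vz^2-2wz = v(z-w/v)^2 - w^2/v .
\]
Since $u=w^2/v$, this gives
\[
\text{\large $\EuScript{W}$}(u;v)=e[u]\,\text{\large $\EuScript{K}$}(v,w)=\viint V(|z|)\,e\big[v(z-w/v)^2\big]\nd\mathrm{A} .
\]
Now set $\xi=w/v$, so that $|\xi|\asymp 1$, and substitute $z\mapsto z+\xi$. The integral becomes $\viint V(|z+\xi|)e[vz^2]\nd\mathrm{A}$.

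We must express the $u$-derivatives at fixed $v$. We have $\xi=\pm\sqrt{u/v}$, and locally a fixed branch is holomorphic in $u$, with
\[
u\,\partial\xi/\partial u=\xi/2, \qquad \partial\xi/\partial\bar u=0 .
\]
Hence the amplitude $\varww(z;u)=V(|z+\xi|)$ satisfies $u^{i+j}\partial_u^i\partial_{\bar u}^j\varww\Lt 1$, uniformly in $z$-derivatives. It is supported in a fixed compact set, because $|\xi|\asymp1$ and $|z+\xi|\asymp 1$.

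\textbf{Main obstacle.} Lemma \ref{4lem: derivative bound} is stated for derivatives in the variable $v$ of the quadratic exponent, whereas here $v$ is held fixed and $u$ enters only through the amplitude. So I would rerun its proof in this setting. By the Weil identity \eqref{4eq: Weil},
\[
I=\frac1{|2v|}\viint\hat\varww(z;u)\,e[-z^2/4v]\nd\mathrm{A}.
\]
Then $u^{i+j}\partial_u^i\partial_{\bar u}^j$ falls only on $\hat\varww$, which stays Schwartz uniformly, and this yields the bound $1/|v|$.

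The slight subtlety is the choice of the square-root branch. I would handle it by noting that the $u$-derivatives are local, so any local holomorphic branch of $\xi$ works, and the bound is uniform because $|\xi|\asymp1$.
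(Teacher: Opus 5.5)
Your proof is correct, and your treatment of Part (2) and of the non-stationary ranges in Part (1) matches the paper's: the paper simply cites Lemma~\ref{4lem: Stationary Phase} and observes that $\text{\large $\EuScript{K}$}(0,w)$ is a Fourier integral.

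The difference lies in the range $|v|\asymp|w|$. Instead of translating, the paper substitutes $z\to (w/v)(z+1)$. This turns $vz^2-2wz$ into $uz^2-u$ and gives
\[
\text{\large $\EuScript{W}$}(u;v)=\frac{|u|}{|v|}\viint V\big(\sqrt{|u|/|v|}\,|z+1|\big)\,e[uz^2]\,\nd\mathrm{A}.
\]
Now $u$ itself is the coefficient of the quadratic phase. The amplitude depends on $u$ only through $|u|/|v|\asymp 1$, after a harmless cutoff to this range. So it is supported in a fixed compact set and satisfies $u^{i+j}\partial_u^i\partial_{\bar u}^j\varww\Lt 1$. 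Lemma~\ref{4lem: derivative bound} then applies verbatim in the variable $u$ and gives $1/|u|\asymp 1/|v|$.

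The dilation is therefore exactly what removes the ``main obstacle'' you identified. No square-root branch ever appears either, since only $|w/v|=\sqrt{|u|/|v|}$ enters.

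Your translation $z\to z+w/v$ keeps the phase $e[vz^2]$ independent of $u$, so you must rerun the Weil-identity argument rather than quote the lemma. In that form, though, the argument is actually simpler. The derivatives never touch $e[-z^2/4v]$, and $\hat\varww(\zeta;u)=e[-\zeta\xi]\,\widehat{V(|\cdot|)}(\zeta)$ is explicitly Schwartz uniformly in $u$.

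Your branch concern is also harmless for a cleaner reason than locality. The integral $\viint V(|z+\xi|)e[vz^2]\,\nd\mathrm{A}$ is even in $\xi$ (substitute $z\to -z$), so it is a genuine function of $\xi^2=u/v$.
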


\begin{proof}
	Part (1)  follows easily from Lemmas  \ref{4lem: Stationary Phase} and \ref{4lem: derivative bound}, with the aid of the change $ z \ra w / v \cdot (z + 1) $. Part (2) is obvious as $\text{\large $\EuScript{K}$}(0, w)$ is a Fourier integral. 
\end{proof}

\section{\texorpdfstring{A Bessel $\delta$-identity}{A Bessel \unichar{"03B4}-identity}}\label{sec: Bessel delta identity}

In this section, in parallel to  \cite[\S 3]{AHLQ-Bessel}, we establish the Bessel $\delta$-identity over imaginary quadratic fields. 

\subsection{Bessel Functions for $\mathrm{PGL}_2(\BC)$}
Recall the expressions of $\boldsymbol{J}_{\vnu,\eqq}(z)$  as in \eqref{3eq: expression for Bessel kernel}--\eqref{3eq: Hankel}.  Firstly, by the asymptotic expansions of Hankel in \cite[7.2 (1, 2)]{Watson}, we have
\begin{equation}\label{6eq: asymptotic behavior for Bessel kernel}
	\boldsymbol{J}_{\vnu,\eqq}(z) = \frac{W_{\vnu,\eqq}(z)}{|z|} e[2z]  + \frac{W_{\vnu,\eqq}( - z)}{|z|} e[-2z] + O_A\left(|z|^{-A}\right),
\end{equation}
for any fixed $A>0$, where  $W_{\vnu,\eqq}(z)$ (it depends implicitly on $A$) has bounds 
\begin{equation*}
	z^j\bar z^k\frac{\partial^{j+k} W_{\vnu,\eqq}(z) }{\partial z^j\partial\bar z^{k} } \Lt_{j,k,\vnu, q, A} 1. 
\end{equation*}
Next, we record here a Fourier--Mellin type integral formula for Bessel kernels (see \cite[Corollary 1.3]{Qi-BE}). 
	\begin{lem}\label{6lem: formula for Bessel kernels}
	For $|\Re(\vnu)|<\Re(\rho)<1/2$, we have
	\begin{equation}\label{3eq: formula for Bessel kernels}
		\begin{split}
			& \viint   \boldsymbol{J}_{\vnu,\eqq}(z)  e[-2z] |z|^{2\rho-2}  { \nd \mathrm{A} }  =  ({4}/{\pi }  )  (8\pi)^{-2 \rho}\cos(\pi \rho)\Gamma(1/2- \rho)^2 \\
			& \cdot \sin(\pi(\rho+\vnu))\sin(\pi(\rho-\vnu))   \Gamma(\rho+\vnu+\eqq)\Gamma(\rho+\vnu-\eqq)\Gamma(\rho-\vnu+\eqq)\Gamma(\rho-\vnu-\eqq).
		\end{split}
	\end{equation}
\end{lem}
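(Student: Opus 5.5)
The plan is to prove \eqref{3eq: formula for Bessel kernels} by Mellin analysis on $\BC^\times$. The left-hand side pairs two functions, $\boldsymbol{J}_{\vnu,\eqq}(z)$ and $e[-2z]$, against $|z|^{2\rho-2}\nd\mathrm{A}$. Each factor has explicitly computable Mellin transforms with respect to the characters $|z|^{2s}(z/|z|)^{k}$ of $\BC^\times$, so the integral becomes a Mellin--Barnes integral that a Barnes-type lemma over $\BC$ should evaluate.

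\emph{Convergence.} First check that the integral converges exactly in the stated range $|\Re(\vnu)|<\Re(\rho)<1/2$. Near $0$, \eqref{3eq: expression for Bessel kernel} gives $\boldsymbol{J}_{\vnu,\eqq}(z)\Lt |z|^{-2|\Re\vnu|}$, which is integrable against $|z|^{2\rho-2}$ when $\Re\rho>|\Re \vnu|$. Near $\infty$, \eqref{6eq: asymptotic behavior for Bessel kernel} shows that the term $W_{\vnu,\eqq}(z)e[2z]/|z|$ exactly cancels the oscillation of $e[-2z]$. What remains is $|z|^{2\rho-3}$, integrable when $\Re\rho<1/2$, plus an oscillatory term $e[-4z]$ handled by the rapidly decaying Fourier phase. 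Since both sides are analytic in $\rho$, it suffices to prove the identity on an open subset of this strip, or to insert a damping factor $e^{-\vepsilon|z|^2}$ and let $\vepsilon\ra0$.

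\emph{Mellin transforms.} Next record the two inputs.
\begin{itemize}
\item For the Bessel kernel, $\boldsymbol{J}_{\vnu,\eqq}$ is the kernel of the Hankel transform in Lemma \ref{2lem: Voronoi summation formula}. Its Mellin transform in the $k$-th angular mode is the ratio of $\GL_2(\BC)$ gamma factors, namely a product of $\Gamma(s+\vnu\pm\cdots)\Gamma(s-\vnu\pm\cdots)$ with shifts $|k/2\pm\eqq|$ over the same with $s\mapsto 1-s$; this is the computation in \cite{Qi-Bessel}. Evenness of $\boldsymbol{J}_{\vnu,\eqq}$ restricts $k$ to even values.
\item The additive character $e[-2z]=\exp(-8\pi i\,\Re z)$ has Mellin transform equal to Tate's local gamma factor over $\BC$, of the shape $(8\pi)^{-s}\,i^{|k|}\,\Gamma(s+|k|/2)/\Gamma(1-s+|k|/2)$.
\end{itemize}
By Mellin--Parseval on $\BC^\times$,
\[
\viint f\,g\,|z|^{2\rho-2}\nd\mathrm{A}=\sum_k \frac{1}{2\pi i}\int \widetilde f(s,k)\,\widetilde g(\rho-s,-k)\,\nd s .
\]
Applied to $f=\boldsymbol{J}_{\vnu,\eqq}$ and $g=e[-2z]$, this turns the left side into a sum over $k\in 2\BZ$ of contour integrals of products of six or eight gamma functions.

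\emph{Evaluating the sum--integral.} This is the main obstacle. The object is a ``complex'' hypergeometric sum--integral: a sum over $k$ combined with an integral over $s$, with gamma functions $\Gamma(s+|k|/2+a)$ paired with reciprocals $\Gamma(1-s+|k|/2-a)$. Its evaluation should follow from the complex analogue of Barnes' first lemma, or equivalently from the complex Gauss summation for ${}_2F_1$ at $1$. In these identities the sum over $k$ and the integral over $s$ combine into an integral over the unitary dual of $\BC^\times$.

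I would first try to rewrite each product as the $\BC$-gamma function $\Gamma_\BC(s,k)=\Gamma(s+|k|/2)/\Gamma(1-s+|k|/2)$, up to signs. The complex Barnes lemma then directly yields a product of four such factors, with parameters $\rho\pm\vnu$ and $\pm\eqq$, divided by one factor at $2\rho$. Converting back with the reflection and duplication formulas, the factors $\sin(\pi(\rho\pm\vnu))$ arise from $\Gamma(x)\Gamma(1-x)=\pi/\sin\pi x$, and $\cos(\pi\rho)\Gamma(1/2-\rho)^2(8\pi)^{-2\rho}$ arises from the $2\rho$ factor together with the normalising constant $2\pi^2/\sin(\pi\vnu)$ in \eqref{3eq: expression for Bessel kernel}.

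\emph{Fallback.} If this bookkeeping becomes unwieldy, I would use the Hankel expression \eqref{2eq: expression for Bessel kernel via Hankel}. The $H^{(2)}$-piece is non-resonant with $e[-2z]$, while the $H^{(1)}$-piece is resonant. One can integrate in polar coordinates, expanding $e[-2z]$ by the Jacobi--Anger expansion and each $J_{\vnu\pm\eqq}$ by its power series. This reduces the integral to classical Weber--Schafheitlin integrals, whose gamma-factor evaluations can be checked against the right-hand side.
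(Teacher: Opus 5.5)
The paper gives no argument of its own for this lemma: it is imported from \cite[Corollary 1.3]{Qi-BE}. Your Mellin--Barnes derivation is therefore a genuinely different route, and it does go through. The citation buys brevity. Your route is self-contained modulo the complex analogue of Barnes' first lemma (available in the literature, e.g.\ Derkachov--Manashov, Neretin), and it explains the hypotheses. The condition $|\Re\vnu|<\Re\rho$ is exactly the pole-separation condition, and $\Re\rho<1/2$ is exactly absolute convergence of the sum--integral, whose integrand is $\asymp(1+|\mathrm{Im}\,s|+|k|)^{2\Re\rho-3}$. This also justifies Parseval on a common line $0<\Re s<\Re\rho-|\Re\vnu|$; in fact the original integral, including the $e[-4z]$ term, converges absolutely. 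The argument is the complex analogue of the classical proof of $\int_0^\infty e^{-t}K_\nu(t)t^{\mu-1}\,\nd t=\sqrt{\pi}\,2^{-\mu}\Gamma(\mu+\nu)\Gamma(\mu-\nu)/\Gamma(\mu+\frac12)$.

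However, the step you call the main obstacle is under-specified, and as literally described Barnes' lemma does not apply. Write $\Gamma_{\BC}(s,k)=\Gamma(s+|k|/2)/\Gamma(1-s+|k|/2)$. Only the modes $(z/|z|)^{2k}$ survive, because $\boldsymbol{J}_{\vnu,\eqq}$ is even. In these modes, up to signs and constants:
\begin{itemize}
\item the kernel contributes $(2\pi)^{-2(\rho-s)}\Gamma_{\BC}(\frac{\rho-s\pm\vnu}{2},k\pm\eqq)$, in half-variables because the Hankel kernel is $\boldsymbol{J}_{\vnu,\eqq}(\sqrt{u})$;
\item the character $e[-2z]$ contributes $(4\pi)^{-2s}\Gamma(s+|k|)/\Gamma(1-s+|k|)$, with $(4\pi)^{-2s}$, not $(8\pi)^{-s}$.
\end{itemize}
That gives three $\Gamma_{\BC}$-factors with mismatched arguments, whereas Barnes' first lemma needs four. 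The missing move is to duplicate the exponential's factor before invoking Barnes:
\[ \frac{\Gamma(s+|k|)}{\Gamma(1-s+|k|)}=2^{2s-1}\,\Gamma_{\BC}\big(\tfrac{s}{2},k\big)\,\Gamma_{\BC}\big(\tfrac{s+1}{2},k\big). \]
After this, $(4\pi)^{-2s}2^{2s}(2\pi)^{2s}=1$, so no power of $s$ survives. This is exactly why the frequency of $e[-2z]$ must match the oscillation $e[\pm 2z]$ of $\boldsymbol{J}_{\vnu,\eqq}$.

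In the variable $w=s/2$ the integrand is now of Barnes type, with parameters $(0,\frac12)$ and $(\frac{\rho+\vnu}{2},\frac{\rho-\vnu}{2})$. The lemma gives $\Gamma_{\BC}(\frac{\rho\pm\vnu}{2},\eqq)\Gamma_{\BC}(\frac{\rho\pm\vnu+1}{2},\eqq)/\Gamma_{\BC}(\rho+\frac12,0)$. Note that the denominator sits at $\rho+\frac12$, not at $2\rho$. A second duplication turns the numerator into $2^{2-4\rho}\Gamma_{\BC}(\rho+\vnu,2\eqq)\Gamma_{\BC}(\rho-\vnu,2\eqq)$, which produces $(8\pi)^{-2\rho}$. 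Reflection then gives $\Gamma_{\BC}(x,2\eqq)=(-1)^{\eqq}\pi^{-1}\sin(\pi x)\Gamma(x+\eqq)\Gamma(x-\eqq)$ and $1/\Gamma_{\BC}(\rho+\frac12,0)=\pi^{-1}\cos(\pi\rho)\Gamma(\frac12-\rho)^2$, which is the right-hand side. To handle the $k$-dependent signs, keep the factors $i^{|k|}$ of the local Mellin transforms. They put each factor in the form $\Gamma(a)/\Gamma(1-a')$ with $a-a'\in\BZ$, which is the form in which the complex Barnes lemma holds.

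One warning on constants. With the paper's $\nd\mathrm{A}=2\,\nd x\,\nd y$, a careful computation yields $8/\pi$ in place of $4/\pi$; the stated constant is the one for Lebesgue measure. You can check this at $\vnu=\frac12$, $\eqq=0$. There $\boldsymbol{J}_{1/2,0}(z)=\cos(8\pi\Re z)/|z|$, and the left side continues to $\frac12\viint e[-4z]|z|^{2\rho-3}\nd\mathrm{A}$. This is harmless for the paper, since $V_{\vnu,\eqq}$ is normalised by $I_{\vnu,\eqq}$ itself.

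Your fallback is unnecessary. It also would not reduce to classical Weber--Schafheitlin integrals, since the angular modes of $J_{\vnu+\eqq}(4\pi z)J_{\vnu-\eqq}(4\pi\bar z)$ are ${}_0F_3$-type functions of $|z|$.
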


	\subsection{Bessel $\delta$-identity}
	
	Let $a,b \neq 0$ be  complex numbers and $X>1$ be a large parameter. Fix a smooth real-valued  bump function $U \in C_c^{\infty} (\BR_+)$ and consider the following Bessel integral: 
	\begin{equation}\label{3eq: definition of I-integral} 
			I_{\vnu,\eqq}(a,b;X)  =  \viint U ( {|z|} / {X} ) \big(e[ 2a\sqrt z] + e[- 2a\sqrt z] \big) \boldsymbol{J}_{\vnu,\eqq}(b\sqrt z)   \nd \mathrm{A} .   
	\end{equation}
Recall here that $ \boldsymbol{J}_{\vnu,\eqq}(z) $ is an even function.  By the change $ \pm \sqrt{z} \ra z $, we have
\begin{equation}\label{3eq: definition of I-integral, 2} 
	I_{\vnu,\eqq}(a,b;X) = 4\viint  U ( {|z|^2} / {X} ) e[2az]\boldsymbol{J}_{\vnu,\eqq}(b z) |z|^2   \nd \mathrm{A} .
\end{equation}

Hereafter, the implied constants will always depend on $\vnu$, $\eqq$, and $U$, and, for simplicity, these will be suppressed from the subscripts of $O,\Lt$ or $\Gt$. 

We first consider the case $b=\pm a$. After the change of variables $z \rightarrow - z/a$, it follows from Mellin inversion that
\begin{equation}\label{6eq: Bessel integral after mellin inversion}
	I_{\vnu,\eqq}(a,\pm a;X) = \frac{2}{\pi i|a|^4}\int_{(\sigma)}  \widetilde{U}(s) |a^2 X|^s \nd s \viint \boldsymbol{J}_{\vnu,\eqq}(z)e[- 2z]  |z|^{2 -2s} { \nd \mathrm{A}} , 
\end{equation}
for $3/2 < \sigma < 2 - |\mathrm{Re}(\vnu)|$, 
where as usual  $(\sigma)$ denotes the vertical line $\Re(s)=\sigma$, and  $\widetilde{U}(s)$ is the   Mellin transform of $U (x)$ on $\BR_+$: 
\begin{equation*}
	\widetilde{U}(s) = \int_{\BR_+} U(x)x^{s-1}\nd x . 
\end{equation*} 

\begin{lem}\label{6lem: I (a,a)}
	Assume $ |a^2 X | > 1$.  Write 
	\begin{equation}\label{6eq: reformulate of Bessel integral}
		I_{\vnu,\eqq}(a,\pm a;X)=2\pi X^2I_{\vnu,\eqq} ( |a^2X | ).
	\end{equation}
	Then $I_{\vnu,\eqq}(x)$ is a smooth function on $\BR_+$ and satisfies
	\begin{equation}\label{6eq: upper bound for 1/I(x)}
		x^j\frac{\nd^j}{\nd x^j}\frac{1}{I_{\vnu,\eqq}(x)}\Lt_j \sqrt x, \qquad x\Gt1.
	\end{equation} 
\end{lem}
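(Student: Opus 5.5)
The plan is to turn the Mellin representation \eqref{6eq: Bessel integral after mellin inversion} into an explicit contour integral for $I_{\vnu,\eqq}(x)$. Then shift the contour to the left and isolate a single leading term of size $x^{-1/2}$ with a nonzero coefficient. The bound \eqref{6eq: upper bound for 1/I(x)} then follows by inverting $I_{\vnu,\eqq}(x) = c\,x^{-1/2}(1+O(x^{-1/2}))$, with compatible bounds on the derivatives.

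\textbf{Step 1: the contour integral.} Put $x=|a^2X|$, so that $|a|^4 = x^2/X^2$. Comparing \eqref{6eq: Bessel integral after mellin inversion} with \eqref{6eq: reformulate of Bessel integral} gives
\begin{equation*}
I_{\vnu,\eqq}(x) = \frac{1}{\pi^2 i}\int_{(\sigma)} \widetilde U(s)\, x^{s-2}\, G(s)\,\nd s, \qquad G(s)=\viint \boldsymbol{J}_{\vnu,\eqq}(z)e[-2z]|z|^{2-2s}\nd\mathrm{A}.
\end{equation*}
By Lemma \ref{6lem: formula for Bessel kernels} with $\rho=2-s$, $G(s)$ equals the right-hand side of \eqref{3eq: formula for Bessel kernels}. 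That expression gives the meromorphic continuation of $G$.

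\textbf{Step 2: growth of $G$ on vertical lines.} By Stirling's formula, on any vertical strip we have:
\begin{itemize}
\item $\Gamma(s-3/2)^2$ decays like $e^{-\pi|\tau|}$;
\item $\cos(\pi(2-s))$ grows like $e^{\pi|\tau|}$;
\item the two sines together grow like $e^{2\pi|\tau|}$;
\item the four Gamma factors together decay like $e^{-2\pi|\tau|}$.
\end{itemize}
Hence $G(s)$ grows at most polynomially in $|\tau|$, where $\tau=\mathrm{Im}(s)$. Since $\widetilde U(s)$ decays rapidly on vertical lines, the integral converges absolutely. This remains true after inserting factors $(s-2)(s-3)\cdots$, and that is what $x^j\,\nd^j/\nd x^j$ produces. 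So $I_{\vnu,\eqq}$ is smooth on $\BR_+$, and each $x^j I^{(j)}_{\vnu,\eqq}(x)$ is given by the same kind of integral.

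\textbf{Step 3: locating the poles.} The factors $\Gamma(2-s\pm\vnu\pm\eqq)$ have poles only at $s=2\pm\vnu\pm\eqq+k$ with $k\geqslant 0$. These lie to the right of $\mathrm{Re}(s) = 2-|\mathrm{Re}(\vnu)|$, so they are never crossed when we move left. The factor $\Gamma(s-3/2)^2$ has double poles at $s=3/2-k$ for $k\geqslant 0$. At exactly these points $\cos(\pi(2-s))$ has simple zeros, so each of them is only a simple pole of $G$.

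\textbf{Step 4: the contour shift.} Move the line of integration from $\sigma\in(3/2,\,2-|\mathrm{Re}\,\vnu|)$ to $\sigma=1$. This crosses only the simple pole at $s=3/2$, whose residue has the form $c_{\vnu,\eqq}\widetilde U(3/2)x^{-1/2}$. Here $c_{\vnu,\eqq}$ is a product of nonzero constants, namely
\begin{equation*}
\sin(\pi(1/2+\vnu))\,\sin(\pi(1/2-\vnu))\,\Gamma(1/2\pm\vnu\pm\eqq)\cdot(\text{explicit nonzero numbers}).
\end{equation*}
It is nonzero because $\cos(\pi\vnu)\neq 0$ for $|\mathrm{Re}\,\vnu|<1/2$, and the Gamma factors have no zeros. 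We also need $\widetilde U(3/2)\neq0$; I would take $U\geqslant0$ and not identically zero, or impose this as a harmless normalization of the bump function. Using the derivative representation from Step 2, this yields
\begin{equation*}
x^j \frac{\nd^j}{\nd x^j}\Big(I_{\vnu,\eqq}(x) - c\,x^{-1/2}\Big) \Lt_j x^{-1}, \qquad c\neq 0.
\end{equation*}

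\textbf{Step 5: inverting.} Write $I_{\vnu,\eqq}(x)=c\,x^{-1/2}(1+E(x))$ with $x^jE^{(j)}(x)\Lt_j x^{-1/2}$. For $x\Gt1$ (with a large enough implied constant) we have $|E|\leqslant1/2$. Then $1/I_{\vnu,\eqq}(x) = c^{-1}x^{1/2}(1+E(x))^{-1}$, and Faà di Bruno together with the Leibniz rule gives \eqref{6eq: upper bound for 1/I(x)}.

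The main obstacle is Step 3 and the start of Step 4: one must check carefully that the double poles of $\Gamma(1/2-\rho)^2$ are reduced to simple poles by the zeros of $\cos(\pi\rho)$, and that the leading residue at $s=3/2$ really is nonzero. If there were a genuine double pole, the main term would carry a $\log x$ and the bound would change.
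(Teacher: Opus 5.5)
Your route is the paper's: write $I_{\vnu,\eqq}(x)$ as the Mellin--Barnes integral of $\widetilde U(s)x^{s-2}G(s)$, with $G$ from Lemma~\ref{6lem: formula for Bessel kernels}. Then move the line left past the simple pole at $s=3/2$ and invert $I_{\vnu,\eqq}(x)=c\,x^{-1/2}(1+E(x))$. Stopping at $\Re s=1$ instead of the paper's $\Re s=0$ is harmless, since an error $O(x^{-1})$ already gives $x^jE^{(j)}(x)\Lt x^{-1/2}$. Your remark that $\widetilde U(3/2)\neq 0$ is needed is a point the paper uses without stating it.

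Step 3, however, is wrong as stated when $\eqq\neq 0$.

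\textbf{Where it fails.} The factors $\Gamma(2-s\pm\vnu-|\eqq|)$ have poles at $s=2\pm\vnu-|\eqq|+k$, $k\geqslant 0$. For $|\eqq|\geqslant 1$ these lie to the left of your starting line. For tempered $\vnu\in i\BR$ and $k=|\eqq|-1$ they lie exactly on your final line $\Re s=1$. So ``they are never crossed'' is false, and your argument as written does not rule out extra residues.

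\textbf{The repair.} These poles are cancelled by the zeros of $\sin(\pi(\rho\pm\vnu))$. By the reflection formula,
\[
\sin(\pi(\rho\pm\vnu))\,\Gamma(\rho\pm\vnu-|\eqq|)=\frac{(-1)^{\eqq}\pi}{\Gamma(1-\rho\mp\vnu+|\eqq|)},\qquad \cos(\pi\rho)\,\Gamma(1/2-\rho)^2=\frac{\pi\,\Gamma(1/2-\rho)}{\Gamma(1/2+\rho)},
\]
whence
\[
G(s)=4\pi^2(8\pi)^{-2\rho}\,\frac{\Gamma(1/2-\rho)}{\Gamma(1/2+\rho)}\cdot\frac{\Gamma(\rho+\vnu+|\eqq|)\,\Gamma(\rho-\vnu+|\eqq|)}{\Gamma(1-\rho-\vnu+|\eqq|)\,\Gamma(1-\rho+\vnu+|\eqq|)},\qquad \rho=2-s .
\]
This form gives three things directly:
\begin{itemize}
\item $G$ is visibly holomorphic on $\Re s<2-|\Re\vnu|$ apart from simple poles at $s=3/2-k$.
\item Stirling gives $G(s)\Lt|\mathrm{Im}\,s|^{2-2\Re s}$ for $|\mathrm{Im}\,s|\geqslant 1$.
\item The residue at $s=3/2$ is exactly $\pi/2$, so the main term is $\widetilde U(3/2)\,x^{-1/2}$, as in the paper. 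No separate nonvanishing argument for $c_{\vnu,\eqq}$ is needed.
\end{itemize}
The paper's proof relies on the same cancellation without saying so. With Step 3 replaced by this observation, the rest of your proof goes through.
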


\begin{proof}
	By definition,
	\begin{equation*}
		I_{\vnu,\eqq}(x)= \frac{1}{\pi^2 i} \int_{(\sigma)} \widetilde{U}(s) x^{s-2} \nd s \viint \boldsymbol{J}_{\vnu,\eqq}(z)e[- 2z] |z|^{2-2s} \nd \mathrm{A} ,
	\end{equation*}
	 Now we apply Lemma \ref{6lem: formula for Bessel kernels} with $\rho = 2 - s$ to evaluate the inner integral and observe that the right side of \eqref{3eq: formula for Bessel kernels} has simple poles at $s = 1/2,\, 3/2, ...$ from $ \cos(\pi (2-s) )\Gamma(s-3/2)^2$. 
	By shifting the contour to $\Re(s)=0$, collecting the residues at $s=1/2, 3/2$, and estimating the resulting integral, we obtain the following asymptotic formula: 
	\begin{equation*}
		I_{\vnu,\eqq}(x)= \frac{\widetilde{U}(3/2)}{\sqrt x} + O\lp\frac{1}{x\sqrt x}\rp.
	\end{equation*}
	It follows that for $x \Gt 1$ large,
	\begin{equation*}
		|I_{\vnu,\eqq}(x)|\Gt 1/\sqrt x.
	\end{equation*}
	Similarly, 
	\begin{equation*}
		x^jI_{\vnu,\eqq}^{(j)}(x)=\frac{\widetilde{U}(3/2)(2j-1)!!}{(-2)^j \sqrt x} + O_j\lp\frac{1}{x\sqrt x}\rp, 
	\end{equation*}
	so the derivatives of $I_{\vnu,\eqq} (x)$ have upper bounds: 
	\begin{equation*}
		x^jI_{\vnu,\eqq}^{(j)}(x)\Lt_j  1/\sqrt x. 
	\end{equation*}
Finally, the bounds for the derivatives of $1/ I_{\vnu,\eqq} (x)$ follow directly from the lower and upper bounds above for $I_{\vnu,\eqq}(x)$.  
\end{proof}

As for the off-diagonal case $b\neq\pm a$, we assume $|b^2 X|>1$ so that $ \boldsymbol{J}_{\vnu,\eqq}(b z) $ is oscillatory for $|z|^2 \asymp X$. By inserting \eqref{6eq: asymptotic behavior for Bessel kernel} into \eqref{3eq: definition of I-integral, 2}, up to a negligible error, we obtain two Fourier-type integrals  and hence the next lemma.  

	\begin{lem}\label{6lem: Bessel kernel is negligibly small}
	Assume $|b^2 X|>1$. Then the Bessel integral $I_{\vnu,\eqq}(a,b;X)$ is negligibly small if $|a+b|\sqrt{X} > X^\vepsilon$ and $|a-b|\sqrt{X} > X^\vepsilon$. 
\end{lem}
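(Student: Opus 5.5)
We work from the second form \eqref{3eq: definition of I-integral, 2} of $I_{\vnu,\eqq}(a,b;X)$. Since $|z|^2\asymp X$ on the support of $U(|z|^2/X)$, we have $|bz|\asymp |b|\sqrt X$, and $|b^2X|>1$ means this is large. So we may replace $\boldsymbol{J}_{\vnu,\eqq}(bz)$ by its asymptotic expansion \eqref{6eq: asymptotic behavior for Bessel kernel}. The remainder term $O_A(|bz|^{-A})$ contributes at most $X^2 (|b|\sqrt X)^{-A}$. Taking $A$ large makes this negligible as long as $|b|\sqrt X \geqslant X^{\vepsilon}$. If instead $1<|b|\sqrt X<X^{\vepsilon}$, we take the expansion in \eqref{6eq: asymptotic behavior for Bessel kernel} to more terms. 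This should be harmless, because the Hankel asymptotics give a genuine expansion in powers of $1/(bz)$, with the coefficients absorbed into $W_{\vnu,\eqq}$.

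After the substitution, $I_{\vnu,\eqq}(a,b;X)$ becomes, up to a negligible error, the sum of two integrals
\begin{equation*}
4\viint U(|z|^2/X)\,\frac{|z|^2\, W_{\vnu,\eqq}(\pm bz)}{|bz|}\, e[2(a\pm b)z]\,\nd\mathrm{A}.
\end{equation*}
We rescale $z=\sqrt X\, \zeta$, so that $|\zeta|\asymp 1$. The phase is then $\Tr(2(a\pm b)\sqrt X\zeta)$, which is linear in $\zeta$. Its derivative satisfies $|\partial f/\partial\zeta| \asymp |a\pm b|\sqrt X > X^{\vepsilon}$, and all its second and higher derivatives vanish.

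Next we check the weight. It is $U(|\zeta|^2)\,X^{2}|\zeta|^2 W_{\vnu,\eqq}(\pm b\sqrt X\zeta)/|b\sqrt X\zeta|$, with a fixed compact support in $\zeta$. The bounds $z^j\bar z^k\partial^{j+k}W_{\vnu,\eqq}(z)/\partial z^j\partial\bar z^k\Lt 1$ show that $W_{\vnu,\eqq}(\pm b\sqrt X\zeta)$ has bounded $\zeta$-derivatives on $|\zeta|\asymp1$, so the weight satisfies $\partial^{k+l}\varww\Lt S$ with $P=\varUpsilon=1$ and $S\Lt X^{2}$.

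We now apply Lemma \ref{4lem: Stationary Phase} with $R=|a\pm b|\sqrt X$, $Z=0$ (so $Q$ and $\varPhi$ may be taken arbitrarily large), and $P=\varUpsilon=1$. This gives the bound $X^{2}(1/R)^A\leqslant X^{2-\vepsilon A}$, which is negligible. One could also integrate by parts directly, since the phase is linear.

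The only point needing care is the uniformity of the Hankel-expansion remainder when $|b|\sqrt X$ is only slightly larger than $1$. I would handle it by taking enough terms in the asymptotic expansion, so that the remainder falls below $X^{-A}$ once $|b|\sqrt X\geqslant X^{\vepsilon}$. In the small range $1<|b|\sqrt X<X^{\vepsilon}$, each term is still of the form (smooth weight with bounded derivatives) times $e[\pm 2bz]$, and the same integration by parts applies to it. Apart from this, the argument is routine.
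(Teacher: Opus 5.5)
This is essentially the paper's argument: insert the Hankel asymptotics \eqref{6eq: asymptotic behavior for Bessel kernel} into \eqref{3eq: definition of I-integral, 2}, then kill the two Fourier-type integrals with linear phase $e[2(a\pm b)z]$ by Lemma \ref{4lem: Stationary Phase}. It is correct whenever $|b|\sqrt X\geqslant X^{\vepsilon/2}$, say.

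Your patch for the edge range is the one weak point, and the paper glosses over this range too. When $|b|\sqrt X$ is smaller than every fixed power of $X$ (e.g.\ $|b|\sqrt X\asymp 1$), extra terms of the divergent asymptotic expansion cannot make the remainder $O(X^{-A})$. There you can instead keep $\boldsymbol{J}_{\vnu,\eqq}(bz)$ in the weight and integrate by parts against $e[2az]$ directly. This works because the $z$-derivatives of $\boldsymbol{J}_{\vnu,\eqq}(bz)$ cost only $|b|\leqslant X^{\vepsilon/2}/\sqrt X$, while $|a|\sqrt X\geqslant |a-b|\sqrt X-|b|\sqrt X\Gt X^{\vepsilon}$.
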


	Now we are ready to state the main result of this section. 
	
		\begin{theorem}\label{6thm: Bessel identity}
		Let $p \in \SO \smallsetminus \{0\}$ and $N,X>1$ be large parameters such that $X\Gt |p|^2/N$ and $X^{1-\vepsilon}>N$.  %
		Then for  $r,n\in\SO $ with $|r|,|n|\asymp N$, we have  
		\begin{equation}\label{6eq: delta identity}
			\begin{split}
				\delta(r = n)  = \frac{\mathrm{N}(r)^{1/4}}{ \mathrm{N}(p)^{3/2}X^{3/2}} \! \sum_{a\in\SO/p\SO} \! e_F\bigg[\frac{a(r-n)}{p}\bigg] I_{\vnu,\eqq}\bigg(\frac{\sqrt r}{p},\frac{\sqrt n}{p};X\bigg)V_{\vnu,\eqq}\bigg( \bigg| \frac{r X }{p^2} \bigg|\bigg) & \\
				 + O_{ A} \big( X^{-A}\big) &,
			\end{split}
		\end{equation}
	for any $A > 0$,	where   $\delta(r = n)$ is the Kronecker $\delta$-symbol that detects $r=n$, and
		\begin{equation*}
			V_{\vnu,\eqq}(x)= \frac{1}{2 \pi  \sqrt x I_{\vnu,\eqq}(x)}
		\end{equation*}
		has bounds $x^{j}V_{\vnu,\eqq}^{(j)}(x)\Lt_j 1$ for any $x \Gt 1$. 
	\end{theorem}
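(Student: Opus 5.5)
The plan is to evaluate the right-hand side of \eqref{6eq: delta identity} directly, splitting into the diagonal $r=n$ and the off-diagonal $r\neq n$. Before doing so, two ambiguities need fixing. The square roots $\sqrt r,\sqrt n$ are defined only up to sign, but $I_{\vnu,\eqq}(a,b;X)$ is invariant under $a\mapsto -a$ (by its symmetrized definition \eqref{3eq: definition of I-integral}) and under $b\mapsto -b$ (since $\boldsymbol{J}_{\vnu,\eqq}$ is even), so the choice of branch is harmless. The hypothesis $X\Gt |p|^2/N$ together with $|r|\asymp N$ gives $|rX/p^2|\Gt 1$, which is exactly the range where Lemma \ref{6lem: I (a,a)} applies; this yields the stated bounds $x^jV_{\vnu,\eqq}^{(j)}(x)\Lt_j 1$, since $1/(\sqrt x I_{\vnu,\eqq}(x))$ has bounded logarithmic derivatives by \eqref{6eq: upper bound for 1/I(x)} and the asymptotic $I_{\vnu,\eqq}(x)\sim \widetilde U(3/2)/\sqrt x$.

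\emph{Diagonal case $r=n$.} The additive character sum over $a\in\SO/p\SO$ is trivial and equals $\mathrm{N}(p)$. Using \eqref{6eq: reformulate of Bessel integral} with $a=b=\sqrt r/p$, we have $I_{\vnu,\eqq}(\sqrt r/p,\sqrt r/p;X)=2\pi X^2 I_{\vnu,\eqq}(x)$ with $x=|rX/p^2|$. Multiplying by $V_{\vnu,\eqq}(x)=1/(2\pi\sqrt x I_{\vnu,\eqq}(x))$ gives
\[
X^2/\sqrt x = X^{3/2}|p|/|r|^{1/2}.
\]
Combined with the prefactor $\mathrm{N}(r)^{1/4}\mathrm{N}(p)^{-3/2}X^{-3/2}$ and the factor $\mathrm{N}(p)$, this gives exactly
\[
\frac{|r|^{1/2}\,|p|^2\,|p|}{|p|^3\,|r|^{1/2}} = 1 .
\]
This is just bookkeeping; one should check that $\mathrm{N}(r)^{1/4}=|r|^{1/2}$ and $\mathrm{N}(p)=|p|^2$.

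\emph{Off-diagonal case $r\neq n$.} The character sum $\sum_{a \bmod p} e_F[a(r-n)/p]$ vanishes unless $p\mid r-n$, by orthogonality (the character $e_F$ is trivial on $\SO'$, i.e.\ $e_F[\SO]=1$ and $e_F[x\,\cdot\,]$ is nontrivial on $\SO$ for $x\notin \SO$). So we may assume $r-n=pk$ with $k\in\SO\smallsetminus\{0\}$, hence $|r-n|\geqslant |p|$. Now apply Lemma \ref{6lem: Bessel kernel is negligibly small} with $a=\sqrt r/p$, $b=\sqrt n/p$. The condition $|b^2X|=|nX/p^2|>1$ holds. Since $|r|,|n|\asymp N$, we can choose branches with $|\sqrt r\pm\sqrt n|\Gt \sqrt N$ for one sign, so that
\[
|\sqrt r\mp\sqrt n| = \frac{|r-n|}{|\sqrt r\pm\sqrt n|} \Gt \frac{|p|}{\sqrt N}
\]
for the other sign. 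In both cases,
\[
|a\pm b|\sqrt X \Gt \frac{\sqrt X}{\sqrt N} \geqslant X^{\vepsilon/2},
\]
using $X^{1-\vepsilon}>N$. Hence each integral is $O_A(X^{-A})$. The prefactor and the sum over $a$ are bounded polynomially in $X$, because $|p|^2\Lt NX<X^2$ and $V_{\vnu,\eqq}\Lt 1$. The total is therefore negligible, after adjusting $\vepsilon$ in the lemma to $\vepsilon/2$.

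The main obstacle is verifying Lemma \ref{6lem: Bessel kernel is negligibly small} with uniformity in the parameters, which we may assume here. Within this proof, the delicate point is the lower bound for $|\sqrt r\pm\sqrt n|$. If $|\sqrt r-\sqrt n|$ is small, then $|\sqrt r+\sqrt n|\asymp\sqrt N$, so the factorization $r-n=(\sqrt r-\sqrt n)(\sqrt r+\sqrt n)$ handles both signs simultaneously.
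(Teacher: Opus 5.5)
Your proposal is correct and is essentially the paper's proof: orthogonality reduces to $p\mid r-n$, Lemma~\ref{6lem: Bessel kernel is negligibly small} together with $r-n=(\sqrt r-\sqrt n)(\sqrt r+\sqrt n)$ and $X^{1-\vepsilon}>N$ excludes $r\neq n$ (you run the paper's argument in contrapositive form), and the diagonal term is normalized via \eqref{6eq: reformulate of Bessel integral}, which you verify explicitly where the paper only says ``suitable normalization.'' One small tidy-up: for the sign with $|\sqrt r\pm\sqrt n|\Gt\sqrt N$ you get $|a\pm b|\sqrt X\Gt\sqrt{NX}/|p|$, which dominates $\sqrt{X/N}$ only because $|p|\leqslant|r-n|\Lt N$ in the surviving case; more simply, the factorization gives $|\sqrt r\pm\sqrt n|\geqslant|r-n|/(|\sqrt r|+|\sqrt n|)\Gt|p|/\sqrt N$ for both signs at once.
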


\begin{proof}
	By orthogonality,  
	\begin{equation*}
		\frac{1}{\mathrm{N}(p)}\sum_{a\in\SO/p\SO}e_F\left[\frac{a(r-n)}{p}\right] = \delta(r-n \in p\SO ).
	\end{equation*}
	According to Lemma \ref{6lem: Bessel kernel is negligibly small}, the integral $I_{\vnu,\eqq}(\sqrt r/p,\sqrt n/p;X)$ is negligibly small unless $|\sqrt r+\sqrt n| \Lt |p|X^\vepsilon/\sqrt X$ or $|\sqrt r-\sqrt n| \Lt |p|X^\vepsilon/\sqrt X$. Since $|r|,|n| \asymp N$, 
	in either case, 
	\begin{equation*}
		|r-n| \Lt |p|X^\vepsilon\sqrt{N/X}.
	\end{equation*}
	For $X^{1-\vepsilon}>N$, this  implies (for $X$ large) $$| r - n | < \min \big\{ |\omega | :  \omega \in p\SO \smallsetminus \{0\} \big\} ,$$ and hence forces $r=n$ in the case that $r-n \in p\SO $. Finally, in view of \eqref{6eq: reformulate of Bessel integral} in Lemma \ref{6lem: I (a,a)},  the identity in \eqref{6eq: delta identity} follows by suitable normalization.  
\end{proof}

	\section{Application of Bessel \texorpdfstring{$\delta$}{\unichar{"03B4}}-identity and Vorono\"i Summation}
	
	Consider the character sum in \eqref{3eq: reduction after AFE}:
	\begin{equation}
		S  (N ) =  \sum_{n \in \SO} \lambda_\uppii(n)\upvchi_{it,m}(n)V \bigg(\frac {|n|} {N}  \bigg). 
	\end{equation}
We start by separating the oscillations: 
\begin{equation*}
	S(N)    = \sum_{r\in\SO} \upvchi_{it,m}(r)  V\lp \frac{|r|}{N}\rp\sum_{n\in\SO} \lambda_\uppii(n) \delta(r = n).
\end{equation*}
Let $p $ be prime.  On applying the Bessel $\delta$-identity in Theorem \ref{6thm: Bessel identity}, we obtain
\begin{equation}\label{5eq: S(N)=Sp(N,X)}
S(N) = S_p^\star(N,X) + S_p^0(N,X)+ O(X^{-A}),
\end{equation}
with
\begin{equation*}
	\begin{split}
		S_p^\star(N,X) = \frac{|d_F| N^{1/2}}{\mathrm{N}(p)^{3/2}X^{3/2}} &\sum_{ r \in\SO} \upvchi_{it,m}(r)V^{}_{^\natural} \bigg(\frac{|r|}{N} \bigg) \sum_{a\in(\SO/p\SO)^\times} e_F\bigg[ \frac{ar}{p}\bigg]\\
		\cdot &\sum_{ n \in \SO} \lambda_\uppii(n) e_F\bigg[\! -\frac{an}{p}\bigg] I_{\vnu,q}\bigg(\frac{\sqrt r}{p},\frac{\sqrt n}{p};X\bigg),
	\end{split}
\end{equation*} 
\begin{equation*}
	S_p^0(N,X) = \frac{|d_F| N^{1/2}}{\mathrm{N}(p)^{3/2}X^{3/2}} \sum_{ r \in\SO} \upvchi_{it,m}(r)V^{}_{^\natural} \bigg( \frac{|r|}{N}\bigg) \sum_{n\in\SO} \lambda_\uppii(n) I_{\vnu,q} \bigg(\frac{\sqrt r}{p},\frac{\sqrt n}{p};X\bigg),
\end{equation*}
where $V^{}_{^\natural} (x) \in C_c^{\infty} [1,2]$ has bounds $ V^{(j)}_{^\natural} (x) \Lt_j 1 $; more explicitly, 
\begin{equation*}
	V^{}_{^\natural} (x) =  \frac {\sqrt{x}  V(x) } {|d_F|}   V_{\vnu,q}  \bigg(\frac {NXx} {|p|^2} \bigg) . 
\end{equation*} 
As  $ I_{\vnu,q} ( {\sqrt r}/{p}, {\sqrt n}/ {p};X ) $ (defined as in \eqref{3eq: definition of I-integral}) is a Hankel-transform integral,  if we apply  the Vorono\"i summation formula in Lemma \ref{2lem: Voronoi summation formula}  with modulus $c=p$ in the reserved direction to the inner $n$-sum in  $S_p^\star(N,X)$, then we obtain
\begin{equation}
	\begin{split}
		S_p^\star(N,X) = {}&  \frac{N^{1/2}}{\mathrm{N}(p)^{1/2}X^{3/2}}   \sum_{ r \in\SO} \upvchi_{it,m}(r)V^{}_{^\natural} \bigg(\frac{|r|}{N} \bigg) \\
		& \cdot   2 \mathrm{Re} \sum_{ n \in\SO} \lambda_\uppii(n) U \bigg(\frac {|n / d_F|} {X} \bigg) S_F (r, n; p)  e_F\bigg[  \frac{2\sqrt{nr}}{p}\bigg],
	\end{split}
\end{equation}
where $S_F (r, n; p)$ is the Kloosterman sum 
\begin{equation*}
S_F (r, n; p) = 	\sum_{a\in(\SO/p\SO)^\times} e_F \bigg[ \frac{ar + \widebar{a} n}{p}\bigg]. 
\end{equation*}
Note that $ S_F (r, n; p) $ is real-valued. Similarly, if the reversed Vorono\"i summation formula with modulus $c=1$ is applied  to $S_p^0(N,X)$, then  
\begin{equation} 
		S_p^0(N,X) \! = \!  \frac{\mathrm{N}(p)^{1/2}N^{1/2}}{X^{3/2}}   \! \sum_{ r \in\SO}\! \upvchi_{it,m}(r)V^{}_{^\natural} \bigg(\frac{|r|}{N}  \bigg) \!
		\cdot 2 \mathrm{Re} \! \sum_{ n \in\SO} \! \lambda_\uppii(n) U\bigg( \bigg| \frac {p^2 n} {X d_F} \bigg|  \bigg) e_F [  2\sqrt{nr} ]. 
\end{equation} 
This can be trivially estimated as 
\begin{equation}\label{5eq: bound for Sp0(N,X)}
	S_p^0(N,X) \Lt \frac{N^{5/2}X^{1/2}}{\mathrm{N}(p)^{3/2}}.
\end{equation}
Further, let us average $S_p^\star(N,X) $ over the primes $p$ with $ |p| $ in the dyadic segment $[P, 2P]$ and $\arg p $ in the arc $ [0, 2\pi / w_F)$ (for brevity, write $p \sim P$).  Let $P^\star$ denote the number of primes $p \sim P$, then, by the Prime Ideal Theorem of Landau, we have $ P^{\star} \asymp P^2 / \log P$. 

From \eqref{5eq: S(N)=Sp(N,X)}--\eqref{5eq: bound for Sp0(N,X)}, let us conclude this section with the following formula for $S (N)$. 

\begin{proposition}\label{6prop: S(N) = S(N,X,P)}
	 Let  $N,X,P$ be large parameters such that \begin{equation}\label{5eq: assumptions} 
	 	 P^2/N \Lt X, \qquad   N < X^{1-\vepsilon} .
	 \end{equation} Then 
	 \begin{equation}\label{6eq: S(N)=S(N,X,P)}
	 	S(N) = S^\star(N,X,P) + O\left(\frac{N^{5/2}X^{1/2}}{P^3} \right),
	 \end{equation}
	 where 
	 \begin{equation}\label{6eq: after average over p}
	 	\begin{split}
	 		S^\star(N,X,P) = {}& \frac{N^{1/2}}{P^\star X^{3/2}}    \sum_{p\sim P} \frac{1}{\mathrm{N}(p)^{1/2}} \sum_{  r \in\SO} \upvchi_{it,m}(r) V^{}_{^\natural} \bigg(\frac{|r|}{N}\bigg) \\
	 		& \cdot    2 \mathrm{Re} \sum_{ n \in\SO} \lambda_\uppii(n) U \bigg(\frac {|n / d_F|} {X} \bigg) S_F (r, n; p)  e_F\bigg[  \frac{2\sqrt{nr}}{p}\bigg].
	 	\end{split}
	 \end{equation}
\end{proposition}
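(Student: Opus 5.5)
The proposition is obtained by averaging the identity \eqref{5eq: S(N)=Sp(N,X)} over the primes $p \sim P$, i.e.\ over prime elements with $|p| \in [P, 2P]$ and $\arg p \in [0, 2\pi/w_F)$.

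\emph{Step 1: the identity for a single prime.} For each such $p$ we have $|p| \asymp P$. The hypotheses \eqref{5eq: assumptions} then give $X \Gt |p|^2/N$ and $X^{1-\vepsilon} > N$, which are exactly the hypotheses of Theorem \ref{6thm: Bessel identity}. That theorem applies with $r, n$ ranging over $|r| \asymp N$; the $n$ with $|n| \not\asymp N$ contribute nothing to $\delta(r=n)$ once $|r| \in [N, 2N]$. This gives \eqref{5eq: S(N)=Sp(N,X)} with error $O(X^{-A})$ for every $p \sim P$.

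\emph{Step 2: splitting off $a = 0$.} The residue $a = 0 \in \SO/p\SO$ produces $S_p^0(N,X)$. The units $a \in (\SO/p\SO)^\times$ produce $S_p^\star(N,X)$, where $p$ prime is used so that every nonzero residue is a unit.

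\emph{Step 3: Vorono\"i on the unit part.} Apply Lemma \ref{2lem: Voronoi summation formula} in reverse with modulus $c = p$. By the $4\viint$ change of variables in \eqref{3eq: definition of I-integral, 2}, the Hankel transform in the $n$-variable of $U(|z|/X)\,e[\pm 2(\sqrt r/p)\sqrt z]$ reproduces $I_{\vnu,q}$, and the dual sum pairs $e_F[-an/p]$ with $e_F[\bar a n/p]$. Summing over $a$ gives the Kloosterman sum $S_F(r,n;p)$. The two signs $\pm$ combine into $2\,\mathrm{Re}$, because $S_F$ is real and $\lambda_\uppii$, $U$ are real. The factor $\mathrm N(p)^{-3/2}$ becomes $\mathrm N(p)^{-1/2}$ after the Vorono\"i normalization $1/(|d_F|\mathrm N(p))$ and the scaling $n/(d_F p^2)$ inside the Bessel kernel. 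This bookkeeping of constants is the only place needing care, and I would check it by matching $\breve{\varww}(n/(d_F p^2))$ with the definition \eqref{3eq: definition of I-integral}.

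\emph{Step 4: bounding the $a = 0$ term.} Treat $S_p^0(N,X)$ the same way with modulus $c = 1$. Estimate it trivially: the $r$-sum has $O(N^2)$ terms, and the $n$-sum runs over $|n| \asymp X/|p|^2$. By Cauchy--Schwarz and \eqref{4eq: Ramanujan bound}, the $n$-sum is $O(X/P^2)$. Multiplying by the prefactor $\mathrm N(p)^{1/2}N^{1/2}X^{-3/2}$ gives
\[
S_p^0(N,X) \Lt P^2 \cdot N^{1/2} X^{-3/2} \cdot N^2 \cdot X P^{-2}\cdot P^{-2} \cdot\ldots ,
\]
which I would simply take from \eqref{5eq: bound for Sp0(N,X)}, namely $O(N^{5/2}X^{1/2}P^{-3})$ since $\mathrm N(p) \asymp P^2$.

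\emph{Step 5: averaging over $p$.} The left side $S(N)$ does not depend on $p$, so averaging \eqref{5eq: S(N)=Sp(N,X)} over the $P^\star$ primes $p \sim P$ gives
\[
S(N) = \frac{1}{P^\star}\sum_{p\sim P} S_p^\star(N,X) + O\big(N^{5/2}X^{1/2}P^{-3}\big) + O(X^{-A}).
\]
Here $P^\star \asymp P^2/\log P$ is positive by the Prime Ideal Theorem, so the average makes sense. The first term is $S^\star(N,X,P)$ as defined in \eqref{6eq: after average over p}. Finally, $X^{-A}$ is absorbed into the main error, since $N, X, P$ are large.

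The main obstacle is the normalization in Step 3, not any genuine estimate.
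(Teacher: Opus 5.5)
You are following the paper's route exactly: the Bessel $\delta$-identity for each prime $p\sim P$, splitting $a=0$ from the units, the reversed Vorono\"i formula with $c=p$ (which produces $S_F(r,n;p)$ and $\mathrm{N}(p)^{-1/2}$) and with $c=1$, a trivial bound for $S_p^0(N,X)$, and averaging over the $P^\star$ primes. Steps 2, 3 and 5 are fine, up to harmless absolute constants in the normalization.

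The step that does not work as written is Step 4, and it is the only actual estimate in the proposition. After the reversed Vorono\"i formula with $c=1$, the $n$-sum runs over $n\in\SO$ with $|n|\asymp X/P^2$. Since $\SO$ is a lattice of rank two, there are $\asymp (X/P^2)^2$ such $n$, not $X/P^2$. Cauchy--Schwarz with \eqref{4eq: Ramanujan bound} then gives $\sum_n|\lambda_\uppii(n)|\ll X^2/P^4$. The prefactor is $\mathrm{N}(p)^{1/2}N^{1/2}X^{-3/2}\asymp PN^{1/2}X^{-3/2}$, not $P^2\cdots$, and the $r$-sum has $O(N^2)$ terms, so
\[
S_p^0(N,X)\ll P\cdot N^{1/2}X^{-3/2}\cdot N^{2}\cdot X^{2}P^{-4}=N^{5/2}X^{1/2}P^{-3}.
\]
Your count would instead give $N^{5/2}X^{-1/2}P^{-1}$, and your displayed product does not close. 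Falling back on \eqref{5eq: bound for Sp0(N,X)} is not legitimate, because that bound is exactly what this step has to establish; in the paper it comes from precisely this two-dimensional count. With the count corrected, your argument proves the proposition.

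A smaller point in Step 1: that $\delta(r=n)=0$ for $|n|\not\asymp N$ only concerns the left-hand side. To write $S_p^\star(N,X)$ and $S_p^0(N,X)$ with $n$ over all of $\SO$, you also need the right-hand side of \eqref{6eq: delta identity} to be negligible for those $n$. Step 3 requires this, since only the full $n$-sum is the Vorono\"i dual of a clean sum. Theorem \ref{6thm: Bessel identity} does not state it, and the paper also leaves it implicit. It follows from a non-stationary phase argument in \eqref{3eq: definition of I-integral, 2}, as in Lemma \ref{6lem: Bessel kernel is negligibly small}, using $|\sqrt r\pm\sqrt n|\gg\sqrt N$. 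When $|n|X/P^2\ll 1$ you also need the small-argument behaviour of $\boldsymbol{J}_{\vnu,\eqq}$. The argument works as long as $NX/P^2$ is at least a small power of $X$; in the application $NX/P^2=K^2$.
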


	\section{Application of Poisson Summation and   Cauchy Inequality} 
	
	For brevity, let us subsequently write
	\begin{equation}
		C = \RC (t, m). 
	\end{equation}
Set
\begin{equation}\label{7eq: condition for X}
	X  = \frac { P^2K^2} {N}, \qquad N^{\vepsilon} < K < C^{1-\vepsilon},
\end{equation}
with the parameter $K$  to be optimized later. Then assumptions in \eqref{5eq: assumptions} amount to
\begin{equation}\label{7eq: condition for P}
	P K>N^{1+\vepsilon}.
\end{equation}

\subsection{Application of the First Poisson Summation} 

By \eqref{3eq: rho, theta}, write
\begin{align}
	\upvchi_{it,m}(r) = e(t\rho(r)+m\theta(r)), \qquad \rho(z) = \frac{\log|z|}{2\pi},\quad \theta(z) = \frac{\arg(z)}{2\pi}, 
\end{align}
and open the Kloosterman sum in the $r$-sum in \eqref{6eq: after average over p}, then we have 
\begin{equation*}
	2 \sum_{a\in(\SO/p\SO)^\times} e_F \bigg[ \frac{  \widebar{a} n}{p}\bigg] \sum_{r   \in \SO}  e_F \bigg[\frac {a r} {p} \bigg]    e ( t\rho(r)+m\theta(r) )  \mathrm{Re} \bigg\{ e_F  \bigg[\frac {2\sqrt{n r}} {  p} \bigg]  \bigg\} V^{}_{^\natural} \bigg(\frac{|r|}{N}  \bigg)  . 
\end{equation*} 
By the Poisson summation formula in Corollary \ref{cor: Poisson} with modulus $c = p$, this sum is transformed into 
\begin{equation}\label{7eq: after Poisson}
	\frac{N^{2+ it} }{\sqrt{|d_F|}}  \sum_{(r,p)=(1)} e_F\left[   \frac{\bar r n}{p}\right] \big(\text{\large $\EuScript{J}$}    (\sqrt{n}; r,p) + \text{\large $\EuScript{J}$}   (- \sqrt{n}; r,p) \big), 
\end{equation}
where the dual $r$-sum  is over $\SO$ by default, and, after the change $\pm \sqrt{ z / N} \ra z$,  the Fourier integral reads 
\begin{equation}\label{7eq: J(w,r,p)}
 	\text{\large $\EuScript{J}$}  (w; r,p) =  2 \! \viint V_{\scriptscriptstyle\natural}(|z|^2)  e \bigg( 2t\rho(z)+2m\theta(z) + \Tr \bigg(\frac{ 2 \sqrt{N} w z + Nrz^2 }{\sqrt{d_F} p  }    \bigg) \!  \bigg)\nd \mathrm{A} . 
\end{equation} 

	\begin{lem}\label{7lem: bound for J(w)}
	Let $|w| \asymp \sqrt{X} $. Then the integral $\text{\large $\EuScript{J}$} (w; r,p)$ is negligibly small unless $|Nr/p| \Lt C$, in which case 
	\begin{equation*}
		 \text{\large $\EuScript{J}$} (w;r,p) \Lt \frac 1 {C} . 
	\end{equation*} 
\end{lem}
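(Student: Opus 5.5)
The plan is to reduce to Lemma \ref{4lem: pre-analysis of I}, which treats exactly integrals of the form $\viint V(|z|)\, e(2t\rho(z)+2m\theta(z)+\Tr(\valpha z^2+\delta(z)))\,\nd\mathrm{A}$. In \eqref{7eq: J(w,r,p)} the weight $V_{\scriptscriptstyle\natural}(|z|^2)$ is a fixed smooth function of $|z|$, supported on the annulus $1\leqslant |z|\leqslant \sqrt 2$ with bounded derivatives, so it plays the role of $V(|z|)$. We therefore set
\begin{equation*}
	\valpha = \frac{Nr}{\sqrt{d_F}\, p}, \qquad \delta(z) = \frac{2\sqrt{N} w z}{\sqrt{d_F}\, p}.
\end{equation*}
Then the phase of $\text{\large $\EuScript{J}$}(w;r,p)$ is exactly $f(z;\valpha)$ of that lemma, and $\delta$ is holomorphic (indeed linear) in $z$.

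The only thing to check is the hypothesis $\partial^j\delta/\partial z^j \Lt_j C^{1-\vepsilon}$ on the annulus. For $j\geqslant 2$ the derivatives vanish, and $\delta$ itself is irrelevant up to the trivial bound. For $j=0,1$ we need
\begin{equation*}
	\bigg|\frac{\sqrt{N} w}{p}\bigg| \asymp \frac{\sqrt{N X}}{P} = K,
\end{equation*}
using $|w|\asymp\sqrt X$, $|p|\asymp P$ and the choice $X = P^2K^2/N$ in \eqref{7eq: condition for X}. Since $K < C^{1-\vepsilon}$ by \eqref{7eq: condition for X}, the hypothesis holds. Strictly, Lemma \ref{4lem: pre-analysis of I} only uses derivatives of $\delta$ through the phase's first and second derivatives (and boundedness of higher ones after the substitution $z\to\sqrt z$). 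For $\delta(\sqrt z)$ the derivatives are still $O(K)$ on the sector, because $|z|\asymp 1$ there.

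Applying Lemma \ref{4lem: pre-analysis of I}, $\text{\large $\EuScript{J}$}$ is negligibly small unless $|\valpha| = |Nr/(\sqrt{d_F}p)| \Lt C$, that is, unless $|Nr/p|\Lt C$, since $d_F$ is fixed. In the remaining range the same lemma gives $\text{\large $\EuScript{J}$} \Lt 1/C$.

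The main point needing care is that the threshold $C^{1-\vepsilon}$ for the perturbation must be genuinely smaller than the main term $|t-im|/(4\pi z)\asymp C$ in the derivative. This is exactly what the restriction $K<C^{1-\vepsilon}$ in \eqref{7eq: condition for X} secures. Otherwise the argument is a direct specialization, with the factor $2$ in front of the integral harmless.
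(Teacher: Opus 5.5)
Your proposal is correct and is exactly the paper's proof: apply Lemma~\ref{4lem: pre-analysis of I} with $\valpha = Nr/\sqrt{d_F}p$ and $\delta(z) = 2\sqrt{N}wz/\sqrt{d_F}p$, where the hypothesis on $\delta$ holds because $\sqrt{NX}/P = K < C^{1-\vepsilon}$. One minor point: $V_{\scriptscriptstyle\natural}$ is not literally fixed, since it depends on $N, X, p$ through $V_{\vnu,\eqq}$, but its derivatives are uniformly bounded, and that is all the lemma uses.
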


\begin{proof}
	Apply Lemma \ref{4lem: pre-analysis of I} with $ \valpha = Nr / \sqrt{d_F} p$ and $ \delta (z) = 2 \sqrt{N} w z / \sqrt{d_F} p$. Note that the bound for $\delta (z)$ therein is due to $ \sqrt{N X} / P < C^{1-\vepsilon} $ as in \eqref{7eq: condition for X}. 
\end{proof}

By Lemma \ref{7lem: bound for J(w)}, at the cost of a negligible error, the $r$-sum  may be truncated at
\begin{equation}\label{7eq: definition of R}
	R = \frac {PC} N.
\end{equation}
For simplicity, let us write 
\begin{equation}\label{7eq: EuScript G}
	\text{\large $\EuScript{G}$}    (w; r,p)=\text{\large $\EuScript{J}$}    (\sqrt{w}; r,p) + \text{\large $\EuScript{J}$}   (-\sqrt{w}; r,p) .
\end{equation}
As $ X = { P^2K^2} / {N}$, if we omit the negligible error, then it follows from  \eqref{6eq: after average over p}, \eqref{7eq: after Poisson}, and \eqref{7eq: EuScript G} that 
\begin{align*} 
		S^\star(N,X,P) & ={}   \frac{  N^{4+it}}{P^\star P^3K^3\sqrt{|d_F|}} \! \sum_{ n } \lambda_\uppii(n) U \bigg(\frac {|n / d_F|} {X} \bigg) \! \sum_{p\sim P}\frac{1}{ |p | } \! \! \sum_{\substack{|r|\Lt R\\(r,p)=(1)}} \!  e_F\left[\frac{\bar r n}{p}\right]\text{\large $\EuScript{G}$}(n; r,p) .
\end{align*}

\subsection{Application of the Cauchy Inequality and the Second Poisson Summation}

Next, we apply the Cauchy inequality  and the Ramanujan bound on average for $\lambda_\uppii(n)$ as in \eqref{4eq: Ramanujan bound}, obtaining 
\begin{align*} 
	S^\star(N,X,P)    \Lt    \frac{  N^{3}}{P^\star P K } \Bigg(  \sum_{ n }   \Bigg| \sum_{p\sim P}\frac{1}{ |p | } \sum_{\substack{|r|\Lt R\\(r,p)=(1)}}  e_F\left[\frac{\bar r n}{p}\right]\text{\large $\EuScript{G}$}(n; r,p) \Bigg|^2 U \bigg(\frac {|n / d_F|} {X} \bigg)  \Bigg)^{1/2}. 
\end{align*}
By opening the square and interchanging the order of summations, we have 
\begin{equation}\label{7eq: after Cauchy-Schwarz}
\begin{split}
	S^\star(N,X,P)^2 \Lt &	\frac{  N^{6}}{(P^{\star } P  K)^2 }\hskip-1pt \underset{p_1,p_2\sim P}{\sum\sum}\frac{1}{|p_1p_2|}\underset{\substack{|r_1|, |r_2| \Lt R\\ (r_1,p_1) = (r_2, p_2)=(1)}}{\sum\sum} \\
& \cdot  \sum_{ n } e_F\left[\frac{\bar r_1 n}{p_1}-\frac{\bar r_2 n}{p_2}\right]\! \text{\large $\EuScript{G}$} (n; r_1,p_1)\overline{\text{\large $\EuScript{G}$} (n; r_2,p_2)} U \bigg(\frac {|n / d_F|} {X} \bigg).
\end{split}
\end{equation}
By  applying the Poisson summation formula  in Corollary \ref{cor: Poisson} with modulus $c=p_1 p_2$,   the $n$-sum in the second line of \eqref{7eq: after Cauchy-Schwarz} is transformed into 
\begin{equation}\label{7eq: the 2nd Poisson}
	{ X^2 } {\textstyle \sqrt{|d_F^{ 3}|}} \sum_{n\equiv\bar r_1 p_2-\bar r_2 p_1  (\mathrm{mod} \, p_1 p_2)}\CL  ( {X   \sqrt{d_F}n } / {p_1 p_2}; r_1,r_2,p_1,p_2  ), 
\end{equation}
where 
\begin{equation}\label{7eq: L(v)}
	\CL(v) \! = \! \CL(v;r_1,r_2,p_1,p_2) \! = \!\! \viint \! U(|z|)\text{\large $\EuScript{G}$} (Xd_Fz; r_1,p_1)\overline{\text{\large $\EuScript{G}$} (Xd_Fz; r_2,p_2)}e [vz] \nd \mathrm{A}.
\end{equation}

\section{\texorpdfstring{Analysis of the Integral $ \boldsymbol{\CL (v)}$}{Analysis of the Integral L(v)}}   

Trivially, Lemma \ref{7lem: bound for J(w)} yields the bound
\begin{equation}\label{7eq: trivial bound of L}
	\CL(v)\Lt \frac {1\,} {C^2}.
\end{equation}
However, in view of \eqref{7eq: condition for X}, \eqref{7eq: J(w,r,p)}, \eqref{7eq: EuScript G}, and \eqref{7eq: L(v)}, after the change $ \pm \sqrt{z} \rightarrow z$,  we may write 
	\begin{equation}\label{8eq: integral}
	\begin{split}
		\CL(v)  = &   \viint    \viint V^{}_{^\natural} ({\textstyle |z  _1|^2})\overline{V^{}_{^\natural} ({|z  _2|^2})}e \big(\psi(z  _1)-\psi(z  _2)+ \Tr(\valpha_1 z  _1^2-\valpha_2 z  _2^2 ) \big)  \\
		  & \cdot 2 \! \viint  |4 z|^2U(|z|^2)e [ v z^2- 2K w (z  _1,z  _2) z ] \nd \mathrm{A}(z) \, \nd \mathrm{A}(z  _1)\, \nd \mathrm{A}(z  _2) ,
	\end{split}
\end{equation} 
if we introduce 
\begin{equation}\label{8eq: notation}
	\psi (z) = 2t\rho(z)+2m\theta(z), \quad \valpha_1= \frac {Nr_1}  {\sqrt{d_F}p_1 }, \  \valpha_2 = \frac {Nr_2}  {\sqrt{d_F}p_2 }, \quad w(z  _1,z  _2)= P   \bigg(\frac {z  _1} {p_1} - \frac {z  _2} {p_2} \bigg). 
\end{equation} 
Now we use the lemmas from \S \ref{sec: analysis of integrals} to analyze the integrals in \eqref{8eq: integral}: the main results are summarized in the following lemma.  

\begin{lem}
	\label{8lem: integral}
	Let $N,K,P$ be parameters such that $N^{\vepsilon}<K<C^{1-\vepsilon}$. 
	Let $p_1, p_2 \sim P$ and $|r_1|, |r_2| \Lt PC/N$. 
	
\begin{itemize}
	\item [{\rm(1)}] $\CL(v)$ is negligibly small if $|v| \Gt K$. 
	\item [{\rm(2)}] Assume that $  K^2/C > N^{\vepsilon} $. Then for $N^{\vepsilon} K^2/C  \Lt |v| \Lt K$, we have 
	\begin{equation}\label{8eq: bound for L(v), main}
		\CL(v) \Lt \frac{1}{C^2 |v| } , 
	\end{equation}
and for $|v| \Lt N^{\vepsilon} K^2/C $, we have
	\begin{equation}\label{8eq: bound for L(v), 2}
		\CL(v) \Lt \frac{1}{C^2}.
	\end{equation}
\item [{\rm(3)}] If $p_1 = p_2 = p$, then $\CL(0) $ is negligibly small for $  |r_1 - r_2| > N^{\vepsilon} PC / KN  $. 
\end{itemize}
\end{lem}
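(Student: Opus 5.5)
The plan is to work directly from the triple-integral representation \eqref{8eq: integral} and treat the inner $z$-integral as the function $\text{\large $\EuScript{K}$}(v, Kw(z_1,z_2))$ of Lemma \ref{4lem: Fourier integral}, with weight $|4z|^2 U(|z|^2)$ in place of $V(|z|)$; the proof of that lemma is insensitive to this change. On the support $|z_1|,|z_2|\asymp 1$ we have $|w(z_1,z_2)|\Lt 1$, so $|Kw|\Lt K$.

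\textbf{Part (1).} For $|v|\Gt K$ (with a large implied constant) we have $|v|\not\asymp |Kw|$, so Lemma \ref{4lem: Fourier integral}(1) makes $\text{\large $\EuScript{K}$}$ negligibly small, uniformly in $z_1,z_2$. Integrating trivially over $z_1,z_2$ gives (1).

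\textbf{Part (2).} For $|v|\Lt N^\vepsilon K^2/C$ the trivial bound \eqref{7eq: trivial bound of L} suffices. Now suppose $N^\vepsilon K^2/C\Lt |v|\Lt K$. By Lemma \ref{4lem: Fourier integral}(1) only the range $|Kw|\asymp|v|$ contributes. There I would write
\[
\text{\large $\EuScript{K}$}=e[-K^2w^2/v]\,\text{\large $\EuScript{W}$}(K^2w^2/v;v),
\]
where $\text{\large $\EuScript{W}$}$ has size $1/|v|$ and satisfies the scaling derivative bounds. Substituting this, $\CL(v)$ becomes a $(z_1,z_2)$-integral with phase
\[
\psi(z_1)-\psi(z_2)+\Tr\big(\valpha_1z_1^2-\valpha_2z_2^2\big)-\Tr\big(K^2w(z_1,z_2)^2/v\big).
\]
The last term is holomorphic in $(z_1,z_2)$, and its second derivatives are $\Lt K^2/|v|\Lt C^{1-\vepsilon}$ by the lower bound on $|v|$. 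This is exactly the perturbation $\delta$ allowed in Lemma \ref{4lem: analysis of I, 2}. The amplitude $\text{\large $\EuScript{W}$}(K^2w^2/v;v)$ has $z_i$-derivatives $\Lt 1/|v|$, because $u\,\partial_u$ bounds combined with $u=K^2w^2/v$ give bounded $z$-derivatives when $|w|\gg$ its scale. Lemma \ref{4lem: analysis of I, 2} (really Lemma \ref{4lem: SDT-4 dimension} with variations $\Lt 1/|v|$) then gives $\CL(v)\Lt 1/(C^2|v|)$.

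\textbf{Part (2), main obstacle.} The delicate point is that $w$ may vanish or be small inside the region. One has to check that the restriction $|Kw|\asymp|v|$ is imposed by a smooth cutoff whose derivatives remain controlled, and that Hypotheses \ref{4hyp: domain}--\ref{4hyp: phase} survive. To handle this I would insert a smooth dyadic partition in $|w|$ before applying the lemmas; the domains cut out are still algebraic.

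\textbf{Part (3).} With $p_1=p_2=p$ and $v=0$, Lemma \ref{4lem: Fourier integral}(2) gives $\text{\large $\EuScript{K}$}(0,Kw)=\text{\large $\EuScript{W}$}_0(KP(z_1-z_2)/p)$. This is Schwartz, so up to negligible error $|z_1-z_2|\Lt N^\vepsilon/K$. I would substitute $z_2=z_1+\zeta$ with $|\zeta|\Lt N^\vepsilon/K$ and study the $z_1$-integral. Its phase is
\[
\psi(z_1)-\psi(z_1+\zeta)+\Tr\big((\valpha_1-\valpha_2)z_1^2-2\valpha_2 z_1\zeta-\valpha_2\zeta^2\big).
\]
The $z_1$-derivative of this phase is $2(\valpha_1-\valpha_2)z_1+O(C|\zeta|)+O(|\valpha_2||\zeta|)$, and the error terms are $\Lt N^\vepsilon C/K$ since $|\valpha_2|\Lt C$. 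If $|\valpha_1-\valpha_2|=N|r_1-r_2|/|\sqrt{d_F}p|>N^\vepsilon C/K$, that is $|r_1-r_2|>N^\vepsilon PC/(KN)$, the main term dominates. Lemma \ref{4lem: Stationary Phase} then applies, with second derivatives $\Lt C$, first derivative $\Gt N^\vepsilon C/K\ge N^{\vepsilon}$, and $Z/R^3\cdot$ small; using $K<C^{1-\vepsilon}$, this shows $\CL(0)$ is negligibly small.
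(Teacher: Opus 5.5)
Your Part (1), the trivial range of (2), and the overall plan for (3) agree with the paper. The genuine gap is in the main range $N^{\vepsilon}K^2/C\Lt|v|\Lt K$ of (2).

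You claim that the amplitude $\text{\large $\EuScript{W}$}(K^2w^2/v;v)$, cut off to $|Kw|\asymp|v|$, has $z_i$-derivatives $\Lt 1/|v|$. This is only true when $|v|\asymp K$. On its support $|w|\asymp|v|/K$, and $\partial u/\partial z_1=2uP/(p_1w)$ for $u=K^2w^2/v$. So every $z_i$-derivative costs a factor $K/|v|$, which is as large as $C/(N^{\vepsilon}K)$ at the lower end of the range. A dyadic partition in $|w|$ cannot repair this, because the whole support already sits at that single small scale. Moreover, Lemma~\ref{4lem: SDT-4 dimension} is stated for product weights $\omega_1(z_1)\omega_2(z_2)$, whereas your weight couples $z_1$ and $z_2$ through $w$. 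If you redo its partial integration for a general weight, the mixed derivative $\partial_{x_1}\partial_{y_1}\partial_{x_2}\partial_{y_2}$ falls entirely on a function of $w$ varying at scale $|v|/K$, on a set of volume $\asymp(|v|/K)^2$. You then get only $\CL(v)\Lt K^2/(C^2|v|^3)$, which loses $(K/|v|)^2$ against \eqref{8eq: bound for L(v), main}. Fed into the bound for $S^2_{\mathrm{off}}$, this costs a factor $C/K$ in its first term, enough to lose the Weyl exponent.

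The missing idea is the paper's separation of variables by Fourier inversion in the rescaled variable $u=Kw/v$. In that variable the weight $W_{\natural}(u;v)=|v|\,\text{\large $\EuScript{W}$}(vu^2;v)F(|u|)$ lives on a fixed annulus with uniformly bounded derivatives. Hence $W_{\natural}(Kw/v;v)=\viint\hat{W}_{\natural}(z;v)\,e[-zKw/v]\,\nd\mathrm{A}(z)$ with $\hat{W}_{\natural}$ rapidly decaying, and one may restrict to $|z|\leqslant N^{\vepsilon}$. The factor $e[-zKw/v]$ is then moved into the phase. The perturbation $\delta=-(K^2w^2+Kzw)/v$ still has derivatives $O(C/N^{\vepsilon})$ for $|v|\Gt N^{\vepsilon}K^2/C$: the new term is linear in $(z_1,z_2)$, and after the square-root substitution its second derivatives are $\Lt N^{\vepsilon}K/|v|\Lt C/K$. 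In this way the fast variation of the amplitude becomes a linear phase term, to which the second derivative test is insensitive. The remaining weight $V_{\natural}(|z_1|^2)\overline{V_{\natural}(|z_2|^2)}$ is a product, so Lemma~\ref{4lem: analysis of I, 2} gives $1/C^2$ uniformly in $z$, and integrating $|\hat{W}_{\natural}(z;v)|$ yields $1/(C^2|v|)$.

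In (3) your route is the paper's: you integrate over $z_1$ at fixed $\zeta=z_2-z_1$, the paper over $z_2$ at fixed $w$. However, your check of Lemma~\ref{4lem: Stationary Phase} fails as written. With $Z=C$ and $R\asymp N^{\vepsilon}C/K$ one has $Z^2/R^3\asymp K^3/(N^{3\vepsilon}C)$, which is large once $K>C^{1/3}$ (the application uses $K=C^{2/3}$), and the condition $K<C^{1-\vepsilon}$ does not help. What is needed is that every $z_1$-derivative of order $\geqslant 2$ of your phase is $O(|\valpha_1-\valpha_2|+CN^{\vepsilon}/K)$. Indeed, the $\psi$-difference contributes $O(C|\zeta|)$ and the quadratic part only $2(\valpha_1-\valpha_2)$. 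Then $Z\Lt R$ and $Z^2/R^3\Lt 1/R\Lt N^{-\vepsilon}$.
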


\begin{proof}
	 First of all, as in Lemma \ref{4lem: Fourier integral}, the integral in the second line of \eqref{8eq: integral} is of the form $\text{\large $\EuScript{K}$}(v, Kw (z_1, z_2))$. Hence Lemma \ref{4lem: Fourier integral} (1) yields immediately the statement in {\rm(1)}, and, further, up to a negligible error,  the following reformulation of \eqref{8eq: integral}: 
	 \begin{equation*}
	 	\CL(v) = \frac{1}{|v|}  \viint \! \viint V^{}_{^\natural}({\textstyle |z_1|^2})\overline{V^{}_{^\natural}({ |z_2|^2})} W^{}_{^\natural}(K w (z_1, z_2)  /v; v) e( f  (z_1,z_2)) \nd \mathrm{A}(z_1)\, \nd \mathrm{A}(z_2),
	 \end{equation*} 
 with  the phase
 \begin{align*}
 	f  (z_1,z_2) = \psi(z  _1) - \psi(z  _2) + \Tr \big(\valpha_1 z_1^2 -\valpha_2 z_2^2  \big) - K^2 \cdot \Tr \bigg(\frac{w(z_1, z_2)^2}{v} \bigg), 
 \end{align*}
and the weight 
 \begin{align*}
 	W^{}_{^\natural}(u; v)  = |v| \cdot	\text{\large $\EuScript{W}$}(v u^2; v ) F (|u|), 
 \end{align*}
where $F \in C_c^{\infty} (\BR_+)$ is a suitable fixed cut-off function (dependent only on $U $). It suffices to know that  $ W^{}_{^\natural}(\cdot ; v)  $ is supported on a fixed annulus and that 
all its derivatives are uniformly bounded. Next, in order to separate variables, by Fourier inversion, we may write 
\begin{equation*}
	W^{}_{^\natural}(u; v) = \viint \hat{W}^{}_{^\natural}(z; v)e[-z u] \nd \mathrm{A} ,
\end{equation*}
where the Fourier transform exhibits rapid decay (uniformly in $v$):
\begin{equation*}
	\hat{W}^{}_{^\natural}(z; v) \Lt_{A} (1+|z|)^{-A}.
\end{equation*}
Thus, up to a negligible error, we arrive at 
\begin{equation*}
	\CL(v) = \frac{1}{|v|}\!  \viint_{D (N^\vepsilon)} \! \! \hat{W}^{}_{^\natural} (z; v) \!  \viint \! \viint \! V^{}_{^\natural}({\textstyle |z_1|^2}) \overline{V^{}_{^\natural}(|z_2|^2)} e(f(z_1,z_2; z, v)) \nd \mathrm{A}(z_1)\, \nd \mathrm{A}(z_2)\, \nd \mathrm{A}(z),
\end{equation*}
where as usual $D (r) \subset \BC$ is the disc of radius $r$ centered at the origin, and 
\begin{equation*}
	f (z_1,z_2 ; z, v) =  f (z_1,z_2) - K \cdot  \Tr\bigg(\frac{z \cdot w(z_1, z_2)   }{v}\bigg).
\end{equation*}
By applying the second derivative test in Lemma \ref{4lem: analysis of I, 2}, with 
\begin{equation*}
	 \delta(z_1,z_2; z, v)
	=
	- \frac{K^2  w(z_1,z_2)^2 + K z w(z_1,z_2) }{v}, 
\end{equation*}
we readily derive the bound in \eqref{8eq: bound for L(v), main}; here, by the definition of $w (z_1, z_2)  $ in \eqref{8eq: notation}, it is easy to verify that the derivatives of $ \delta (z_1, z_2; z, v) $ are all $ O (C/ N^{\vepsilon}) $ for any $|z| \leqslant N^\vepsilon$ and $|v|\Gt N^{\vepsilon} K^2/C $. Moreover, the bound in \eqref{8eq: bound for L(v), 2} has already been proven in \eqref{7eq: trivial bound of L}.

Finally, let us prove the assertion in (3) in the case that
\begin{align*}
	 \valpha_1= \frac {Nr_1}  {\sqrt{d_F}p }, \quad  \valpha_2 = \frac {Nr_2}  {\sqrt{d_F} p }, \qquad w(z  _1,z  _2)=   \frac {P(z_1 - z_2)} p.  
\end{align*}
To this end, let $w = w (z_1, z_2)$ be the new variable so that, according to Lemma \ref{4lem: Fourier integral} (2), the inner $z$-integral (see \eqref{8eq: integral}) is just $W_0( K w )$ for a radial Schwartz function $W_0$.  Now  we may write
\begin{equation*}
	\CL(0)=\viint W_0(K w)\viint V_0(w,z_2) e ( f_0(w,z_2)  )  \nd \mathrm{A}(z_2) \, \nd \mathrm{A}(w),
\end{equation*}
where 
\begin{align*}
	V_0(w,z_2) = |p/P|^2 V^{}_{^\natural}  \big( {\textstyle | pw/P+z_2 |^2} \big) \overline{V^{}_{^\natural}  \big(|z_2 |^2 \big) },
\end{align*}
\begin{align*}
	f_0(w,z_2) = \psi  (pw/P+z_2)  - \psi  (z_2)  + \Tr \big( \valpha_1   (pw/P+z_2) ^2-\valpha_2 z_2^2\big) .
\end{align*}
Let us restrict the $w$-integral to the small disc $D (N^{\vepsilon} / K)$, 
on which  $V_0(w, \cdot)$ is of uniformly bounded derivatives, while for $|r_1| \Lt PC / N$, we have
\begin{align*}
	\partial f_0 (w, z_2) / \partial z_2 & = \frac {2 N (r_1-r_2) z_2} {\sqrt{d_F} p}  + \frac{2 N r_1 w}{\sqrt{d_F} P} - \frac{(t-im) pw / P}{2\pi z_2 (   z_2 + pw / P )}  \\
	& = \frac {2 N (r_1-r_2) z_2} {\sqrt{d_F} p}  + O \bigg(\frac{CN^\vepsilon}{K} \bigg),
\end{align*}
 	It follows from $|r_1 - r_2| > N^{\vepsilon} PC / KN $ that $|	\partial f_0/\partial z_2|\Gt N|r_1-r_2|/P > N^{\vepsilon}$. Thus, for such $r_1, r_2$, we infer that $ \CL (0)$ is negligibly small by   Lemma \ref{4lem: Stationary Phase}.       \delete{, and, similarly, 
 \begin{align*}
 	\partial^2 f_0 (w, z_2) / \partial z_2^2 = \frac {2 N (r_1-r_2) } {\sqrt{d_F} p}  + O \bigg(\frac{CN^\vepsilon}{K} \bigg), 
 \end{align*}
 \begin{align*}
	\partial^j f_0 (w, z_2) / \partial z_2^j = O_{j} \bigg(\frac{CN^\vepsilon}{K} \bigg), \qquad \text{($j \geqslant 3$)}. 
\end{align*}}  
\end{proof}

	\section{\texorpdfstring{Estimates for $S^\star(N,X,P)$}{Estimates for S*(N,X,P)}}

First let us assume that  
\begin{equation}\label{9eq: condition on K}
	N, K > \sqrt{C} N^{\vepsilon} ,
\end{equation}
where $ K > \sqrt{C} N^{\vepsilon} $ is the condition  in Lemma \ref{8lem: integral} {\rm(2)}. 
By \eqref{7eq: after Cauchy-Schwarz}, \eqref{7eq: the 2nd Poisson}, along with Lemma \ref{8lem: integral}, we conclude that
\begin{equation}\label{9eq: after Cauchy-Schwarz}
	S^\star(N,X,P) \Lt \sqrt{ S_\text{diag}^2(N,X,P)} + \sqrt{  S_{\mathrm{off}}^2 (N,X,P)} + N^{-A},
\end{equation}
where
\begin{equation} \label{9eq: diag definition} 
	S_\text{diag}^2 (N,X,P)=\frac{N^6 X^2}{(P^{\star  } P^2  K)^2 }\sum_{p\sim P}\mathop{\sum}_{\substack{|r |\Lt R\\ (r  ,p)=(1)  }} \frac{1}{C^2}, 
\end{equation}
and 
\begin{equation}\label{9eq: off definition} 
	\begin{split}
		S_\text{off}^2 (N,X,P) & =  \frac{N^6 X^2}{(P^{\star  } P^2  K)^2 }   \mathop{\sum \sum}_{p_1,p_2 \sim P} \mathop{\sum \sum}_{\substack{|r_1|,|r_2|\Lt R\\ (r_1  ,p_1) = (r_2, p_2) =(1) }} \\ 
		& \cdot \left\{ \mathop{\sum}_{ \substack{N^{1+\vepsilon}/ C \Lt |n| \Lt N / K \\ n \equiv  \bar r_1   p_2 - \bar r_2   p_1 (\mathrm{mod}\, p_1 p_2) }  }    \frac{|p_1 p_2|}{ C^2 X |n| }  + \mathop{\sum}_{ \substack{ 0 < |n| \Lt N^{1+\vepsilon}/ C \\ n  \equiv  \bar r_1   p_2 - \bar r_2   p_1 (\mathrm{mod}\, p_1 p_2) }  } \frac 1 {C^2}  \right\},
	\end{split}
\end{equation}
in correspondence to the cases where $n = 0$ and $n \neq 0$ in \eqref{7eq: the 2nd Poisson}, respectively.  
Note that for $n=0$ the congruence condition in \eqref{7eq: the 2nd Poisson} reads $  r_1   p_1 \equiv   r_2   p_2 (\mathrm{mod}\, p_1 p_2)$ and this implies   $ p_1 = p_2  $($= p$) and $r_1 \equiv r_2 (\mathrm{mod}\, p)$ (keep in mind that $\arg p_1, \arg p_2 \in [0, 2\pi/w_F)$), but this in turn forces $r_1 = r_2$($= r$), since if otherwise we would have $ |r_1 - r_2| \geqslant |p |  > N^{\vepsilon} PC / KN  $ due to \eqref{9eq: condition on K} (see Lemma \ref{8lem: integral} (3)).  Moreover, when applying the estimates in Lemma \ref{8lem: integral} (2) with $v=X\sqrt{d_F}n/p_1p_2$, note that $N^\vepsilon K^2/C\Lt |v|\Lt K$ or $|v|\Lt N^\vepsilon K^2/C$ amounts to $N^{1+\vepsilon}/ C\Lt |n|\Lt N/K$ or $|n|\Lt N^{1+\vepsilon}/ C$, respectively, for $X=P^2K^2/N$ (see \eqref{7eq: condition for X}). 

From \eqref{9eq: diag definition}, we deduce that 
\begin{equation}\label{9eq: Sdiag bound}
	\sqrt{S_{\text{diag}}^2(N,X,P)} \Lt \frac{N^3 X }{P^{\star  } P^2  K }   \frac {\sqrt{P^{\star}} R } {C}   \Lt  KN   \sqrt{\log P}, 
\end{equation}
where we have used $NX=P^2K^2$ and $R=PC/N$ as in \eqref{7eq: condition for X} and \eqref{7eq: definition of R}.


As for the sum $S_\text{off}^2 (N,X,P)$ defined in \eqref{9eq: off definition}, we first observe that  necessarily  $ p_1 \neq p_2 $. 
Otherwise, if $p_1 = p_2 = p$, then the congruence $ n  \equiv  \bar r_1   p - \bar r_2   p \,  (\mathrm{mod}\, p^2)  $ would imply $p \,| \, n$. This is impossible in view of the assumption $    N^{1+\vepsilon}/ K < P$ in \eqref{7eq: condition for P} and the length $N/K $ of the $n$-sum. Now we interchange the sum over $n$ and the sums over $r_1,r_2$. For  $n$ given, the congruence $n  \equiv  \bar r_1   p_2 - \bar r_2   p_1 (\mathrm{mod}\, p_1 p_2)$ splits into $r_1\equiv \bar np_2 (\mathrm{mod}\, p_1)$ and $r_2\equiv -\bar np_1 (\mathrm{mod}\, p_2)$, so 
\begin{equation*}
	\begin{split}
		S_{\text{off}}^2 & (N,X,P) =\frac{N^6 X^2}{(P^\star P^2 K)^2}\mathop{\sum \sum}_{\substack{ p_1\neq p_2\\p_1, p_2 \sim P }}\\
		\cdot&\left\{\sum_{N^{1+\vepsilon}/ C \Lt |n|\Lt N/K} \hskip -6pt \mathop{\sum \sum}_{\substack{ |r_1|, |r_2| \Lt R\\ r_1 \equiv  \bar{n}p_2(\mathrm{mod}\, p_1)\\ r_2 \equiv - \bar{n}p_1(\mathrm{mod}\, p_2)}}  \!\! \frac{|p_1 p_2|}{C^2X|n|} + \sum_{0<|n|\Lt N^{1+\vepsilon}/ C}  \mathop{\sum \sum}_{\substack{ |r_1|, |r_2| \Lt R\\ r_1 \equiv  \bar{n}p_2(\mathrm{mod}\, p_1)\\ r_2 \equiv -  \bar{n}p_1(\mathrm{mod}\, p_2)}} \! \! \frac{1}{C^2}	\right\}.
	\end{split}
\end{equation*}
Consequently, for $N < C^{1+\vepsilon}$ as in Theorem \ref{thm: bound for S(N)}, so that $ R / P >  1 / C^{\vepsilon} $, we have 
\begin{equation}\label{9eq: Soff bound}
	\sqrt{S_{\text{off}}^2(N,X,P)} \Lt N^{\vepsilon} \frac{N^3 X }{P^{\star  } P^2  K }  P^{\star} \bigg(   \sqrt{\frac {P^2} {C^2 X} \frac {N} {K} }   +   \frac {N } {C^2}   \bigg) \! \lp \frac {R} {P} \rp^2 \!  = N^{\vepsilon} \bigg( \frac {C N} {\sqrt{K}} + K N \bigg) . 
\end{equation}
We conclude from \eqref{9eq: after Cauchy-Schwarz}, \eqref{9eq: Sdiag bound}, and \eqref{9eq: Soff bound} that 
\begin{equation}\label{9eq: S}
	S^\star(N,X,P) \Lt  N^{1+ \vepsilon } \bigg( \frac {C } {\sqrt{K}} + K   \bigg) .
\end{equation}

Finally, by \eqref{6eq: S(N)=S(N,X,P)}  in Proposition \ref{6prop: S(N) = S(N,X,P)} and the preceding estimate for $S^\star(N,X,P)$, we have 
\begin{equation}
	S (N) \Lt N^{1+ \vepsilon } \bigg( \frac {C } {\sqrt{K}} + K + \frac {K N} {P^2} \bigg) , 
\end{equation}
and hence, on the choices $K  = C^{2/3}$ and $P = N $, we obtain the bound \eqref{3eq: bound for S(N)} in Theorem \ref{thm: bound for S(N)}. Note that the required conditions in \eqref{7eq: condition for X}, \eqref{7eq: condition for P} and \eqref{9eq: condition on K} are well justified for our choices of $K$ and $P$.

	\def\cprime{$'$}


\end{document}